    \setlist[enumerate]{label=(\roman*),noitemsep,topsep=3pt} 
    \setlist[itemize]{label={$\cdot$},noitemsep,topsep=3pt} 
\definecolor{gray}{rgb}{0.5,0.5,0.5}
    \newdimen\nodeSize
    \newdimen\nodeDist
    \tikzset{
        position/.style args={#1:#2 from #3}{
            at=(#3.#1), anchor=#1+180, shift=(#1:#2)
        }
    }
\small\color{gray},
\newcommand\ackname{Acknowledgements}
  \newenvironment{acknowledgements}{%
      \titlepage
      \null\vfil
      \@beginparpenalty\@lowpenalty
      \begin{center}%
        \bfseries \ackname
        \@endparpenalty\@M
      \end{center}}%
     {\par\vfil\null\endtitlepage}
  \newenvironment{acknowledgements}{%
      \if@twocolumn
        \section*{\abstractname}%
      \else
        \small
        \begin{center}%
          {\bfseries \ackname\vspace{-.5em}\vspace{\z@}}%
        \end{center}%
        \quotation
      \fi}
      {\if@twocolumn\else\endquotation\fi}
\newtheorem{theorem}{Theorem}[section]
\newtheorem{corollary}[theorem]{Corollary}
\newtheorem{proposition}[theorem]{Proposition}
\newtheorem{lemma}[theorem]{Lemma}
\theoremstyle{definition}
\newtheorem{definition}[theorem]{Definition}
\newtheorem{remark}[theorem]{Remark}
\newtheorem{problem}{Problem}
\newcommand{\thistime}{\expandafter\calctimeA\pdfcreationdate\@nil} 
\def\calctimeA#1:#2#3#4#5#6#7#8#9{\calctimeB}
\def\calctimeB#1#2#3#4#5\@nil{#1#2:#3#4}
\newcommand{\nor}{\mathsf{N}} 
\newcommand{\C}{\mathsf{C}}
\newcommand{\D}{\mathsf{D}}
\newcommand{\Q}{\mathsf{Q}}
\newcommand{\SD}{\mathsf{SD}}
\newcommand{\M}{\mathsf{M}}
\newcommand{\V}{\mathsf{V}}
\newcommand{\numberset}[1]{\mathbbm{#1}}
\newcommand{\N}{\numberset{N}}
\newcommand{\Z}{\numberset{Z}}
\newcommand{\Zmod}[1]{\Z/{#1}\Z}
\newcommand{\sym}{\operatorname{\mathsf{Sym}}}
\newcommand{\aut}{\operatorname{\mathsf{Aut}}}
\newcommand{\hol}{\operatorname{\mathsf{Hol}}}
\renewcommand{\ker}{\operatorname{\mathsf{ker}}}
\newcommand{\inv}{\mathsf{inv}}
\newcommand{\id}{\mathsf{id}}
\renewcommand{\max}{\operatorname{\mathsf{max}}}
\newcommand{\+}{\mkern2mu} 
\let\perogni\forall 
\let\esiste\exists
\renewcommand{\forall}{\perogni\+}
\renewcommand{\exists}{\esiste\+}
\newcommand{\g}{\gamma}
\renewcommand{\d}{\delta}
\newcommand{\s}{\sigma}
\renewcommand{\c}{\circ}
\def\ceil#1{\left\lceil{#1}\right\rceil}
\def\floor#1{\left\lfloor{#1}\right\rfloor}
\newcommand{\norm}{\trianglelefteq} 
\newcommand{\mo}{^{-1}} 
\newcommand{\omo}{^{\mathrlap{\,\circ}{-} 1}}
\title{The mutually normalizing regular subgroups of the holomorph of a cyclic group of prime power order}
\author{Filippo Spaggiari}
\date{\today}
\begin{document}

\maketitle

\begin{abstract}
    Let $G=\C_{p^n}$ be a finite cyclic $p-$group, and let $\hol(G)$ denote its holomorph. In this work, we find and characterize the regular subgroups of $\hol(G)$ that mutually normalize each other in the symmetric group $\sym(G)$. We represent such regular subgroups as vertices of a graph, and we connect a pair of them by an edge when they mutually normalize each other. The approach to construct this \emph{local normalizing graph} relies on the theory of \emph{gamma functions}, and the final result will contain all the information about the regular subgroups of $\hol(G)$ in a compact form.
\end{abstract}

\section{Introduction}

Given a pointed set $(G,1)$, we consider the undirected graph whose vertices are regular subgroups of $\sym(G)$, where two of them are joined by an edge if they mutually normalize each other. This \emph{normalizing graph} $\mathcal{G}$ has attracted some interest, among others, because of its connections with the recently developed theory of skew braces (see \cite{caranti2021brace}, \cite{caranti2022skew}, and \cite{childs2019biskew}). For each regular subgroup $N$ of $\sym(G)$, the bijection
    \begin{align*}
        N \to G\\
        n \mapsto 1^{n}
    \end{align*}
induces, by transport, a group structure $(G, \circ_{N},1)$ on $G$. If $\mathcal{C}$ is a clique (that is, a complete subgraph) of $\mathcal{G}$, then for each $N,M \in\mathcal{C}$ we have that $(G, \circ_{N}, \circ_{M})$ is a skew brace.

In general, a clique in the \emph{normalizing graph} is equivalent to a \emph{brace block}, a notion introduced and used by A. Koch in \cite{koch2020abelian} and \cite{koch2021abelian} to produce some non-trivial solutions of the \emph{Yang-Baxter Equation}. In this paper, for all cyclic groups $G = (G, \cdot, 1)$ of prime power order, we determine the subgraph of $\mathcal{G}$ consisting of the regular subgroups that normalize the image $\rho(G)$ of the right regular representation of $G$, that is, they lie in the (permutational) holomorph $\hol(G)$ of $G$. We distinguish the cases where the order of $G$ is a power of $2$ and where it is a power of an odd prime. The case $p=2$ turns out to be more difficult to grasp, requiring several exceptions and a multi-directional approach. On the other hand, if compared with the previous one, the case of $p$ odd appears as a simple and straightforward generalization. Therefore, we will study them separately and in different sections of this work (Sections \ref{sec:p2} and \ref{sec:podd}).
    
To accomplish this subgraph, later called the \emph{local normalizing graph}, and to classify the regular subgroups of the holomorph, we employ the language and method of \emph{gamma functions}, as outlined in \cite{Campedel_2020}, \cite{Caranti_2020}, \cite{Caranti_2018}. Beyond the complete description of the local normalizing graph in the cases mentioned above, the main result of this work is \thref{thm:neo}, which is a characterization of the mutual normalization of regular subgroups in terms of a pair of equations. In the case of cyclic groups, such equations will be easily translated in modular arithmetic, providing a simple way to check whether they are valid or not.

\section{Preliminaries} \label{preliminaries}

In this section we present the notions and the main results we are going to use later, in the development of the local normalizing graphs. We start by recalling the definition of the algebraic environment in which we will conduct the study. Fixed a group $G$, we denote by $\id$ and $\inv$ the identity and the inversion map on $G$ respectively, by $\rho\colon G\to\sym(G), g\mapsto(x\mapsto xg)$ the right regular representation, and by $\iota\colon G\to G, g\mapsto(x\mapsto gxg\mo)$ the conjugation map.

\begin{definition}
    Let $G$ be a group. We define the \textbf{(permutational) holomorph} $\hol(G)$ of $G$ as the subgroup of $\sym(G)$ generated by $\rho(G)$ and $\aut(G)$, that is $$\hol(G) =\langle\rho(G),\aut(G)\rangle.$$
\end{definition}

A powerful characterization is known for the regular subgroups of the holomorph of a given group $G$, by using the notions of \emph{gamma function} and \emph{skew brace}. We recall these concepts and the aforementioned result, from \cite{Campedel_2020}, \cite{Caranti_2020}, and \cite{Caranti_2018}. For the purposes of this paper, we restrict attention to the case of finite groups.

\begin{definition}
    A \textbf{(right) skew brace} on a group $(G,\cdot)$ is a triple $(G,\cdot,\circ)$ where $\circ$ is an operation on $G$ such that $(G,\circ)$ is also a group and the following axiom holds 
    \begin{equation} \label{eqn:skew_braces_axiom}
        (x\cdot y)\circ z = (x\circ z)\cdot z\mo\cdot(y\circ z)\quad\quad\quad\forall x,y,z\in G.
    \end{equation}
    The two groups $(G,\cdot)$ and $(G,\circ)$ are called the \textbf{additive group} and \textbf{circle group} of the skew brace, respectively. A skew brace $(G,\cdot,\circ)$ is called \textbf{bi-skew brace} if $(G,\circ,\cdot)$ is also a skew brace.
\end{definition}

\begin{definition} \thlabel{def:gamma_function}
    Let $(G,\cdot)$ be a group, let $A\le G$ and let $\gamma\colon A\to \aut(G,\cdot)$ be a function. Then $\gamma$ is said to satisfy the \textbf{gamma functional equation} on $A$ if $$\gamma(g^{\gamma(h)}\cdot h)=\gamma(g)\gamma(h)\quad\quad\quad\forall g,h\in A.$$ The function $\gamma$ is said to be a \textbf{relative gamma function} on $A$ if it satisfies the gamma functional equation on $A$ and $A$ is $\gamma(A)$-invariant. If $A=G$, a relative gamma function on $G$ is simply called \textbf{gamma function} on $G$.
\end{definition}

\begin{theorem}[\cite{Caranti_2020}] \thlabel{thm:srg1}
    Let $(G,\cdot)$ be a finite group. The following data are equivalent.
    \begin{enumerate}
        \item A regular subgroup $N\le\hol(G,\cdot)$. \thlabel{thm:srg1_1}
        \item A gamma function $\gamma\colon (G,\cdot)\to\aut(G,\cdot)$. \thlabel{thm:srg1_2}
        \item A group operation $\circ$ on $G$ such that $(G,\cdot,\circ)$ is a skew brace.
    \end{enumerate}
    Moreover, the previous data are related as follows.
    \begin{enumerate}[label=(\alph*)]
        \item Each element of $N$ can be written uniquely in the form $\nu(g)=\gamma(g)\rho(g)$ for some $g\in G$.
        \item For all $g,h\in G$ we have $g\circ h=g^{\gamma(h)}\cdot h$.
        \item For all $g,h\in G$ we have $g^{\nu(h)}=g\circ h$.
        \item For every $g\in G$ we have $\gamma(g)\mo=\gamma(g\omo)$, where $g\omo$ denotes the inverse of $g$ with respect to the circle operation $\circ$.
        \item The map $\gamma\colon(G,\circ)\to\aut(G,\cdot)$ is a group homomorphism.
        \item The map
        \[
        \begin{aligned}
            \nu\colon(G,\circ)&\to N\\
            g&\mapsto\gamma(g)\rho(g)
        \end{aligned}
        \]
        is a group isomorphism.
    \end{enumerate}
\end{theorem}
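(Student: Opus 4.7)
The plan is to establish the cyclic chain (i) $\Rightarrow$ (ii) $\Rightarrow$ (iii) $\Rightarrow$ (i), verifying the ancillary formulas (a)--(f) as they fall out of the bijections. The key structural fact throughout is that $\hol(G)$ is the internal semidirect product $\rho(G) \rtimes \aut(G)$, so every element decomposes uniquely as a product $\alpha\rho(x)$ with $\alpha \in \aut(G)$ and $x \in G$, and moreover $\alpha\mo \rho(x) \alpha = \rho(x^\alpha)$.

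For (i) $\Leftrightarrow$ (ii), given a regular $N \le \hol(G)$, for each $g \in G$ let $\nu(g)$ be the unique element of $N$ with $1^{\nu(g)} = g$; writing $\nu(g) = \gamma(g)\rho(g')$ in the semidirect decomposition and evaluating at $1$ forces $g' = g$, which yields (a) and defines the map $\gamma\colon G \to \aut(G)$. A direct computation using the commutation relation above gives
\[
\nu(g)\nu(h) = \gamma(g)\gamma(h)\rho(g^{\gamma(h)} \cdot h),
\]
so, defining $g \circ h := g^{\gamma(h)} h$ as in (b), closure of $N$ under products forces $\gamma(g^{\gamma(h)} h) = \gamma(g)\gamma(h)$, which is exactly the gamma functional equation; setting $g = h = 1$ additionally yields $\gamma(1) = \id$. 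For the converse, one simply sets $N := \{\gamma(g)\rho(g) : g \in G\}$ and reads the same computation backwards: $N$ is closed under multiplication and, being a finite nonempty subset of $\hol(G)$, is a subgroup; regularity is immediate since $1^{\gamma(g)\rho(g)} = g$, and (f) is then the statement that the bijection $\nu$ transports the multiplication of $N$ to $\circ$. Formula (c) is direct: $g^{\nu(h)} = g^{\gamma(h)\rho(h)} = g^{\gamma(h)} \cdot h = g \circ h$.

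For (ii) $\Leftrightarrow$ (iii), given $\gamma$ define $\circ$ by (b). Associativity follows from the gamma functional equation via
\[
(g \circ h) \circ k = g^{\gamma(h)\gamma(k)} h^{\gamma(k)} k = g^{\gamma(h \circ k)}(h \circ k) = g \circ (h \circ k),
\]
the element $1$ is a two-sided identity because $\gamma(1) = \id$, and inverses exist by finiteness; (d) is then a consequence of the homomorphism property (e), itself a restatement of the gamma functional equation. The skew brace axiom collapses to the one-line calculation $(xy) \circ z = x^{\gamma(z)} y^{\gamma(z)} z = (x \circ z) \cdot z\mo \cdot (y \circ z)$. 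Conversely, given a skew brace, one sets $\gamma(z)(x) := (x \circ z) \cdot z\mo$; the brace axiom directly yields $\gamma(z) \in \aut(G,\cdot)$, and the gamma functional equation is extracted by combining the brace axiom with the associativity of $\circ$.

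I anticipate the main obstacle to be bookkeeping rather than anything conceptual. One must fix conventions for the action of $\aut(G)$ on $G$ so that the product $\gamma(g)\gamma(h)$ composes in the order matching the $\circ$-product appearing inside $\rho$ after pushing automorphisms past translations, and one must carefully distinguish between $\cdot$-- and $\circ$--inverses when deriving the gamma functional equation from the skew brace axiom; here the identity $g\mo \circ h = h \cdot (g \circ h)\mo \cdot h$, obtained by applying the brace axiom to $g \cdot g\mo = 1$, is the crucial manipulation that makes the composite $\gamma(g)\gamma(h)$ and the value $\gamma(g \circ h)$ collapse to the same automorphism.
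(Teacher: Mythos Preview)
The paper does not prove this theorem: it is quoted in the preliminaries with the citation \cite{Caranti_2020} and used as a black box throughout. Your argument is the standard one and is correct; it is in fact the proof one would find (up to notational choices) in the cited reference, so there is nothing to compare against here.
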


The property of being regular is preserved by conjugation under any permutation, in particular under any automorphism. Moreover, the gamma function associated with the conjugated regular subgroup under an automorphism can be obtained with a very simple formula. We summarize all these observations in the following proposition. 

\begin{proposition}[\cite{Campedel_2020}] \thlabel{thm:gamma_alpha_lemma}
    Let $G$ be a finite group, let $N\le\hol(G)$ be a regular subgroup with associated gamma function $\gamma$, and let $\alpha\in\aut(G)$. Then
    \begin{enumerate}
    
        \item $N^\alpha$ is a regular subgroup of $\hol(G)$.
        
        \item The gamma function $\gamma^\alpha$ associated with the regular subgroup $N^\alpha$ is given by $$\gamma^\alpha(g)=\gamma(g^{\alpha\mo})^\alpha=\alpha\mo\gamma(g^{\alpha\mo})\alpha\quad\quad\forall g\in G.$$
        
        \item If $H\le G$ is $\gamma(H)$-invariant, then $H^\alpha$ is $\gamma^\alpha(H^\alpha)$-invariant.
        
        \item Denote by $\circ$ and $\circ_\alpha$ respectively the circle operations associated with $\gamma$ and $\gamma^\alpha$. Then the map $$\alpha\mo\colon(G,\circ_\alpha)\to (G,\circ)$$ is an isomorphism of groups. In particular $N^\alpha\cong N$.
        
    \end{enumerate}
\end{proposition}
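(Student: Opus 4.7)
The plan is to reduce everything to the single commutation identity
\[
\alpha\mo\rho(h)\alpha=\rho(h^\alpha)\qquad\forall h\in G,\ \forall\alpha\in\aut(G),
\]
which I would verify directly from the definitions: applying both sides to an arbitrary $x\in G$ gives $(x^{\alpha\mo}\cdot h)^\alpha = x\cdot h^\alpha$ on the left, which equals the right-hand side. Once this identity is in hand, items (i)--(iv) follow by formal manipulation of the factorization $\nu(h)=\gamma(h)\rho(h)$ provided by \thref{thm:srg1}.

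For (i), since $\alpha\in\aut(G)\le\hol(G)$, the conjugate $N^\alpha$ is certainly a subgroup of $\hol(G)$. Regularity of $N^\alpha$ on $G$ is immediate from the fact that $\alpha$ is a bijection of $G$ intertwining the action of $N$ with that of $N^\alpha$: explicitly, $g^{n^\alpha}=(g^{\alpha\mo})^n{}^\alpha$, so transitivity and the trivial-stabilizer property transfer from $N$. For (ii), I would take an arbitrary $\nu(h)=\gamma(h)\rho(h)\in N$ and compute
\[
\nu(h)^\alpha=\alpha\mo\gamma(h)\rho(h)\alpha=\bigl(\alpha\mo\gamma(h)\alpha\bigr)\bigl(\alpha\mo\rho(h)\alpha\bigr)=\gamma(h)^\alpha\+\rho(h^\alpha),
\]
using the commutation identity in the last step. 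Since $\gamma(h)^\alpha\in\aut(G)$, this is precisely the canonical $\aut(G)\rho(G)$ factorization of an element of $N^\alpha$; re-indexing by $g=h^\alpha$ yields $\gamma^\alpha(g)=\gamma(g^{\alpha\mo})^\alpha$.

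For (iii), a typical element of $H^\alpha$ is $k^\alpha$ with $k\in H$, and an evaluation of $\gamma^\alpha$ on $H^\alpha$ is of the form $\gamma^\alpha(h^\alpha)=\gamma(h)^\alpha$ with $h\in H$. Then $(k^\alpha)^{\gamma^\alpha(h^\alpha)}=(k^{\gamma(h)})^\alpha$, and $\gamma(H)$-invariance of $H$ forces $k^{\gamma(h)}\in H$, hence the image lies in $H^\alpha$. For (iv), using relation (b) of \thref{thm:srg1} and the formula from (ii):
\[
g\circ_\alpha h=g^{\gamma^\alpha(h)}\cdot h=g^{\alpha\mo\gamma(h^{\alpha\mo})\alpha}\cdot h.
\]
Applying $\alpha\mo$ and using that $\alpha\mo\in\aut(G,\cdot)$,
\[
(g\circ_\alpha h)^{\alpha\mo}=(g^{\alpha\mo})^{\gamma(h^{\alpha\mo})}\cdot h^{\alpha\mo}=g^{\alpha\mo}\circ h^{\alpha\mo},
\]
so $\alpha\mo\colon(G,\circ_\alpha)\to(G,\circ)$ is a group isomorphism; composing with the isomorphisms $(G,\circ)\cong N$ and $(G,\circ_\alpha)\cong N^\alpha$ of \thref{thm:srg1}(f) gives $N^\alpha\cong N$.

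There is no genuine obstacle here: the whole proposition is a mechanical consequence of the commutation relation between $\aut(G)$ and $\rho(G)$ inside $\hol(G)$ together with the factorization theorem. The only point where care is needed is bookkeeping of the conjugation notation, in particular consistently using $h^\alpha=\alpha\mo h\alpha$ versus $h^{\alpha\mo}$, and correctly recognizing the resulting product as the canonical $\aut(G)\rho(G)$-decomposition, which is what identifies the new gamma function.
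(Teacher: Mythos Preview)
Your proof is correct and complete: the commutation identity $\alpha\mo\rho(h)\alpha=\rho(h^\alpha)$ is verified properly under the paper's right-action conventions, and each of (i)--(iv) follows exactly as you say. Note that the paper does not supply its own proof of this proposition, since it is quoted directly from \cite{Campedel_2020}; your argument is the standard one and is precisely what one would expect to find there.
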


Since $\aut(G)$ is closed under composition, it is natural to transport the idea of conjugation of subgroups to the gamma functions, interpreting the construction in \thref{thm:gamma_alpha_lemma} as an \emph{action by conjugation} on the gamma functions. We formalize this intuition in the following definition.

\begin{definition} \thlabel{thm:gamma_conj_class}
    Let $G$ be a finite group and let $N\le\hol(G)$ be a regular subgroup with associated gamma function $\gamma\colon G\to\aut(G)$. Consider the set $\Gamma = \left\{\gamma^\alpha\, :\,\alpha\in\aut(G)\right\}.$ Then the map
    \[
    \begin{aligned}
        \Gamma\times\aut(G)&\to\Gamma \\
        (\gamma^\alpha,\beta) &\mapsto (\gamma^\alpha)^\beta=\gamma^{\alpha\beta}
    \end{aligned}
    \]
    defines a transitive action on $\Gamma$, where $\gamma^{\alpha\beta}$ is the gamma function associated with the regular subgroup $N^{\alpha\beta}$, as in \thref{thm:gamma_alpha_lemma}. We refer to this action as the \textbf{conjugation by $\alpha\in\aut(G)$}.
\end{definition}

Due to reasons explained in the following, we need regularity of such action by conjugation, instead of transitivity only. 

\begin{lemma} \thlabel{thm:gamma_conj_class_regular}
    Let $G$ be a finite group such that $\aut(G)$ is abelian and let $N\le\hol(G)$ be a regular subgroup with associated gamma function $\gamma\colon G\to\aut(G)$. Denote by $\nor_{\sym(G)}(N)$ the normalizer of $N$ in $\sym(G)$ and consider the set $\Gamma = \left\{\gamma^\alpha\, :\,\alpha\in\aut(G)\right\}$ and the subgroup $M=\nor_{\sym(G)}(N)\cap\aut(G)\le\aut(G)$. Then the factor group $$E=\frac{\aut(G)}{M}$$ acts regularly on $\Gamma$, in the sense that the map
    \[
    \begin{aligned}
        \Gamma\times E &\to\Gamma \\
        (\gamma^\alpha,M\beta) &\mapsto (\gamma^\alpha)^\beta=\gamma^{\alpha\beta}
    \end{aligned}
    \]
    defines a regular action on $\Gamma$.
\end{lemma}

When no confusion can arise, we will still refer to this action as \emph{conjugation}. We will see that the local normalizing graphs are composed of subgraphs whose orders are strictly related to the size of the conjugacy classes of this action. Therefore, it will be useful the following straightforward result.

\begin{corollary} \thlabel{thm:gamma_counts}
    In the notation of \thref{thm:gamma_conj_class_regular}, $$|\Gamma|=|E|=\frac{|\!\aut(G)|}{|\nor_{\sym(G)}(N)\cap\aut(G)|}$$ is the cardinality of the conjugacy class under automorphisms of a given regular subgroup $N\le\hol(G)$.
\end{corollary}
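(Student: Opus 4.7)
The plan is to assemble the statement from three ingredients that are already in place: the regularity of the action in \thref{thm:gamma_conj_class_regular}, Lagrange's theorem applied to the quotient $A=\aut(G)/M$, and the bijection between regular subgroups of $\hol(G)$ and gamma functions supplied by \thref{thm:srg1}. None of these steps should require a lengthy argument, so the proof is essentially a short book-keeping exercise.

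First I would invoke \thref{thm:gamma_conj_class_regular}: since $A$ acts regularly on $\Gamma$, the orbit-stabiliser theorem (in its degenerate, regular form) immediately yields $|\Gamma|=|A|$. Next, by Lagrange's theorem applied to $M\le\aut(G)$, we have
\[
    |A|=\frac{|\!\aut(G)|}{|M|}=\frac{|\!\aut(G)|}{|\nor_{\sym(G)}(N)\cap\aut(G)|},
\]
which is the middle equality of the statement.

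It remains to identify $|\Gamma|$ with the size of the conjugacy class $\mathcal{N}=\{N^\alpha:\alpha\in\aut(G)\}$ of $N$ under $\aut(G)$. For this I would use the bijection between regular subgroups of $\hol(G)$ and gamma functions provided by \thref{thm:srg1}, together with point (ii) of \thref{thm:gamma_alpha_lemma}, which tells us that the gamma function attached to $N^\alpha$ is exactly $\gamma^\alpha$. Consequently, the map $N^\alpha\mapsto\gamma^\alpha$ is a well-defined surjection $\mathcal{N}\to\Gamma$, and it is injective because two regular subgroups of $\hol(G)$ are equal if and only if their gamma functions coincide. Hence $|\mathcal{N}|=|\Gamma|$, closing the chain of equalities.

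The only place where one must be slightly careful is in verifying that the correspondence $N^\alpha\leftrightarrow\gamma^\alpha$ is genuinely a bijection of sets (not just a surjective parametrisation), but this is immediate from \thref{thm:srg1}. Apart from this minor check, the statement follows directly by concatenating the three observations above; I do not anticipate any substantial technical obstacle.
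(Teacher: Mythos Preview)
Your proposal is correct and follows exactly the natural approach implied by the paper, which in fact omits the proof entirely, labelling the result as a ``straightforward'' consequence of \thref{thm:gamma_conj_class_regular}. The three ingredients you identify---regularity of the action giving $|\Gamma|=|A|$, Lagrange for the quotient, and the bijection from \thref{thm:srg1} together with \thref{thm:gamma_alpha_lemma} to match $\Gamma$ with the conjugacy class of $N$---are precisely what is needed, and nothing more.
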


\begin{definition}
    Let $G$ be a finite group. Given $g\in G$ and $\alpha\in\aut(G)$, we denote by $[g,\alpha] = g\mo g^\alpha$. If $A\le G$ and $\gamma\colon A\to\aut(G)$ is a function, we denote by $$[A,\gamma(A)] =\Big\{[x,\gamma(y)]\, :\, x,y\in A\Big\}.$$
\end{definition}

It is not true, \emph{a priori}, that gamma functions are homomorphisms of groups. However, when this holds, the situation is surprisingly simple to analyze.

\begin{proposition}[\cite{Campedel_2020}] \thlabel{thm:gamma_morphism_ker}
    Let $G$ be a finite group, $A\le G$ and $\gamma\colon A\to\aut(G)$ a function such that $A$ is invariant under $\gamma(A)$. Then, any two of the following conditions imply the third one.
    \begin{enumerate}
        \item $\gamma([A,\gamma(A)])=\{\id\}$.
        \item $\gamma\colon A\to\aut(G)$ is a group homomorphism.
        \item $\gamma$ satisfies the gamma functional equation on $A$.
    \end{enumerate}
\end{proposition}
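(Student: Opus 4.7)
The plan is to verify each of the three implications. The two easy directions $(2) \wedge (3) \Rightarrow (1)$ and $(1) \wedge (2) \Rightarrow (3)$ both rest on the elementary factorization $g^{\gamma(h)} \cdot h = g \cdot [g, \gamma(h)] \cdot h$. Under (2) the left-hand side expands as $\gamma(g^{\gamma(h)} h) = \gamma(g)\,\gamma([g,\gamma(h)])\,\gamma(h)$; comparing this with $\gamma(g)\gamma(h)$, taken either as the hypothesis (3) (which forces the central factor to equal $\id$, i.e.\ (1)) or as the desired conclusion (3) (using (1) to erase that central factor), settles both directions at once.

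The genuinely non-trivial implication is $(1) \wedge (3) \Rightarrow (2)$, and I would handle it through the circle operation $g \circ h \defeq g^{\gamma(h)} \cdot h$. Condition (3) is precisely the statement that $\gamma$ is a homomorphism from $(A, \circ)$ to $\aut(G)$. I would first verify that $(A, \circ, 1)$ is actually a group: closure under $\circ$ follows from the $\gamma(A)$-invariance of $A$; associativity and the two-sided identity $1$ come from (3) together with $\gamma(1) = \id$; and inverses exist because the map $\nu(g) \defeq \gamma(g)\rho(g)$ is an injective monoid homomorphism from $(A,\circ)$ into the finite group $\sym(G)$, so its image is a finite submonoid of a group, hence a subgroup. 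Consequently, every $h \in A$ has a $\circ$-inverse $h\omo \in A$, and (3) applied to $h \circ h\omo = 1$ yields the crucial identity $\gamma(h)^{-1} = \gamma(h\omo) \in \gamma(A)$.

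With this in hand the computation is short. Since $\gamma(h)^{-1} \in \gamma(A)$, the commutator $c \defeq [g, \gamma(h)^{-1}]$ lies in $[A, \gamma(A)]$, and hypothesis (1) gives $\gamma(c) = \id$. The identity $g \cdot h = g^{\gamma(h)^{-1}} \circ h$, immediate from $(g^{\gamma(h)^{-1}})^{\gamma(h)} = g$, combined with (3), yields $\gamma(gh) = \gamma(g^{\gamma(h)^{-1}})\,\gamma(h)$. Applying the same decomposition once more to $g^{\gamma(h)^{-1}} = g \cdot c$, namely $g \cdot c = g^{\gamma(c)^{-1}} \circ c$, and invoking $\gamma(c) = \id$, collapses $\gamma(g^{\gamma(h)^{-1}})$ to $\gamma(g)$, proving (2). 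The main obstacle is exactly this middle step: one must realize that (3) promotes the subset $\gamma(A) \subseteq \aut(G)$ to a subgroup by supplying the inverses $\gamma(h)^{-1} = \gamma(h\omo)$, so that (1) becomes applicable to the commutators $[g, \gamma(h)^{-1}]$ appearing in the decomposition $g \cdot h = g^{\gamma(h)^{-1}} \circ h$; without this promotion the argument collapses.
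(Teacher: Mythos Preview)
The paper does not supply its own proof of this proposition: it is quoted verbatim from \cite{Campedel_2020} in the preliminaries, with no argument given. So there is nothing in the paper to compare your proof against.

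That said, your argument is correct. The two ``easy'' implications are handled cleanly via the factorization $g^{\gamma(h)}h = g\cdot[g,\gamma(h)]\cdot h$ (and you implicitly use that $[g,\gamma(h)]\in A$, which follows from $A\le G$ together with the $\gamma(A)$-invariance of $A$). For $(1)\wedge(3)\Rightarrow(2)$, your route through the circle operation is sound: the verification that $(A,\circ)$ is a group is complete (associativity from (3), identity from $\gamma(1)=\id$, inverses via the injective monoid map $\nu$ into the finite group $\sym(G)$), and the key observation that $\gamma(h)^{-1}=\gamma(h\omo)\in\gamma(A)$ is exactly what makes hypothesis (1) applicable to the commutator $[g,\gamma(h)^{-1}]$ appearing in $g\cdot h = g^{\gamma(h)^{-1}}\circ h$. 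The two-step unwinding $\gamma(gh)=\gamma(g^{\gamma(h)^{-1}})\gamma(h)=\gamma(g)\gamma(h)$ is correct.
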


We have described the conjugation of a gamma function under the action of automorphisms. As the following result describes, there is a special case of \emph{conjugation} with respect to the inversion map. Note that, in the abelian case, the inversion map is just one of the automorphisms already described. 

\begin{proposition}[\cite{Campedel_2020}] \thlabel{thm:gamma_inv}
    Let $G$ be a finite group, $\gamma\colon G\to\aut(G)$ be a gamma function on $G$ and denote by $N$ the associated regular subgroup and by $\circ$ the associated circle operation. Define 
    \[
    \begin{aligned}
        \overline\gamma\colon G&\to\aut(G) \\
        g&\mapsto\overline\gamma(g) =\gamma(g\mo)\iota(g\mo)
    \end{aligned}
    \]
    Then
    \begin{enumerate}
        \item The conjugate $N^\inv\le\hol(G)$ of $N$ under $\inv\in\sym(G)$ is a regular subgroup.
        \item $\overline\gamma$ is a gamma function and it is the one associated with the regular subgroup $N^\inv$.
        \item Denote by $\overline\circ$ is the circle operation associated with $\overline\gamma$. Then $\inv\colon(G,\circ)\to(G,\overline\circ)$ is a group isomorphism, therefore $N^\inv\cong N$.
    \end{enumerate}
\end{proposition}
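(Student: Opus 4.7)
The plan is to show that $\inv\in\sym(G)$ normalizes $\hol(G)$, so that $N^\inv$ automatically sits inside $\hol(G)$, and then to read off the associated gamma function by writing each element of $N^\inv$ in the canonical factorized form $\alpha\,\rho(h)$ with $\alpha\in\aut(G)$ and $h\in G$.

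First I would establish two auxiliary identities by one-line checks on a generic $x\in G$: namely $\inv\,\alpha\,\inv=\alpha$ for every $\alpha\in\aut(G)$ (which is just the fact that automorphisms commute with inversion), and $\inv\,\rho(g)\,\inv=\iota(g)\,\rho(g\mo)$, writing $\iota(g)$ for the inner automorphism $x\mapsto g\mo x g$. Since $\hol(G)$ is generated by $\aut(G)$ and $\rho(G)$, these two identities together show that $\inv$ normalizes $\hol(G)$ in $\sym(G)$, hence $N^\inv\le\hol(G)$. Because regularity is preserved by conjugation inside $\sym(G)$ (it is only a relabeling of the points), part (i) follows at once.

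Combining the two identities above yields $\inv\,\gamma(g)\rho(g)\,\inv=\gamma(g)\,\iota(g)\,\rho(g\mo)$. Reparametrizing $h=g\mo$, every element of $N^\inv$ takes the form $\gamma(h\mo)\,\iota(h\mo)\,\rho(h)$. The uniqueness of the factorization $\aut(G)\cdot\rho(G)$ inside $\hol(G)$, together with part (a) of \thref{thm:srg1}, then forces the gamma function associated with $N^\inv$ to be $\overline\gamma(h)=\gamma(h\mo)\,\iota(h\mo)$; in particular $\overline\gamma$ is automatically a gamma function because it arises from a regular subgroup of $\hol(G)$, which is (ii).

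For (iii), using part (b) of \thref{thm:srg1} to expand both $\circ$ and $\overline\circ$ in terms of the respective gamma functions, I would check the identity $(g\circ h)\mo=g\mo\,\overline\circ\,h\mo$ for all $g,h\in G$ by a short direct computation. This is exactly the statement that $\inv\colon(G,\circ)\to(G,\overline\circ)$ is a homomorphism, and being a bijection it is an isomorphism. The main potential obstacle is bookkeeping: $\inv$ is an anti-homomorphism, so its interaction with $\rho$ inevitably produces an inner-automorphism correction, and one must stay consistent with the convention used for $\iota$; once the convention is fixed so that $\inv\,\rho(g)\,\inv=\iota(g)\,\rho(g\mo)$ holds, everything else reduces to routine verification.
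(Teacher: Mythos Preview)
Your proof is correct. The paper itself does not supply a proof of this proposition: it is quoted from \cite{Campedel_2020} as a preliminary fact, so there is no argument in the paper to compare against. Your approach---first checking that $\inv$ normalizes $\hol(G)$ via the two elementary identities $\inv\,\alpha\,\inv=\alpha$ and $\inv\,\rho(g)\,\inv=\iota(g)\rho(g\mo)$, then reading off $\overline\gamma$ from the unique $\aut(G)\cdot\rho(G)$ factorization, and finally verifying $(g\circ h)\mo=g\mo\,\overline\circ\,h\mo$ directly---is exactly the standard one and goes through without issue. The only point worth a remark is that the paper never explicitly fixes the convention for $\iota$; your choice $x^{\iota(g)}=g\mo x g$ is the one that makes both the conjugation identity for $\rho$ and the stated formula $\overline\gamma(g)=\gamma(g\mo)\iota(g\mo)$ come out correctly, so you have implicitly recovered the intended convention.
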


We conclude the preliminaries with two strong results that we will use at the very beginning of the case of cyclic groups, in order to obtain some useful lemmas. 

\begin{theorem}[\cite{Caranti_2018}] \thlabel{thm:srg2}
    Let $(G,\cdot)$ be a finite group. The following data are equivalent.
    \begin{enumerate}
        \item A bi-skew brace $(G,\cdot,\circ)$.
        \item A regular subgroup $N\le\hol(G)$ which is normalized by $\rho(G)$.
        \item A gamma function $\gamma\colon G\to\aut(G)$ which satisfies $$\gamma(xy)=\gamma(y)\gamma(x)\quad\quad\quad\forall x,y\in G$$ that is $\gamma$ is an anti-homomorphism.
        \item A gamma function $\gamma\colon G\to\aut(G)$ which satisfies $$\gamma(x^{\gamma(y)})=\gamma(x)^{\gamma(y)}\quad\quad\quad\forall x,y\in G.$$
        \item A function $\gamma\colon G\to\aut(G)$ which satisfies
        \[
            \begin{cases}
                    \gamma(xy)=\gamma(y)\gamma(x) \\
                    \gamma(x^{\gamma(y)})=\gamma(x)^{\gamma(y)}
            \end{cases}
            \quad\quad\quad\forall x,y\in G.
        \]
        \item A gamma function $\gamma\colon G\to\aut(G)$ which satisfies $$\gamma([G,\overline\gamma(G)])=\{\id\}$$
        where $\overline\gamma\colon G\to\aut(G) $ is the gamma function $\overline\gamma(g)=\gamma(g\mo)\iota(g\mo)$.
        \item A gamma function $\gamma\colon G\to\aut(G)$ which satisfies $$\gamma(x\mo y\mo x^{\gamma(y)}y)=\{\id\}\quad\quad\quad\forall x,y\in G.$$
    \end{enumerate}
\end{theorem}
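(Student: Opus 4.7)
The plan is to prove all seven conditions equivalent by arranging them in a small cycle, with \thref{thm:srg1} as the dictionary between (bi-)skew braces, regular subgroups, and gamma functions. The conditions split into three natural clusters --- structural (1)--(2), multiplicative (3)--(5), and commutator (6)--(7) --- and the main task is bridging these clusters.

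The equivalence (1) $\Leftrightarrow$ (2) follows by applying \thref{thm:srg1} symmetrically: $(G,\cdot,\circ)$ being a skew brace corresponds to a regular $N=\nu(G)\le\hol(G,\cdot)$, and the extra bi-skew-brace axiom that $(G,\circ,\cdot)$ also be a skew brace corresponds, by the theorem applied to $(G,\circ)$, to $\rho(G)\le\hol(G,\circ)$, equivalently to $\rho(G)$ normalizing $N=\rho_\circ(G)$. To bridge (2) with (5), I would use the dual gamma function: by \thref{thm:srg1}(a) applied to $(G,\circ)$, each $\rho(g)$ factors uniquely as $\gamma'(g)\rho_\circ(g)$, and comparison with $\rho_\circ(g)=\nu(g)=\gamma(g)\rho(g)$ forces $\gamma'(g)=\gamma(g)\mo$. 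Requiring $\gamma(g)\in\aut(G,\circ)$ expands, via $a\circ b=a^{\gamma(b)}b$, directly to (4), and requiring $\gamma'$ to satisfy the gamma functional equation on $(G,\circ)$ expands (using $a^{\gamma(b)\mo}\circ b=ab$) to (3). Hence (2) $\Leftrightarrow$ (5).

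Within the multiplicative cluster, (3) $\Leftrightarrow$ (4) is a short algebraic step: under (3), the chain $\gamma(y)\gamma(x^{\gamma(y)})=\gamma(x^{\gamma(y)}y)=\gamma(x)\gamma(y)$ rearranges to (4); conversely, substituting $x\mapsto x^{\gamma(y)\mo}$ in the gamma functional equation and applying (4) with $y\mapsto y\omo$ (using $\gamma(y\omo)=\gamma(y)\mo$ from \thref{thm:srg1}(d)) recovers (3). Condition (5) is just the conjunction. For the commutator cluster, (6) $\Leftrightarrow$ (7) is immediate: the identity $y\mo x^{\gamma(y)}y=x^{\gamma(y)\iota(y)}$ (with $\iota(y)$ conjugation by $y$) combined with $\gamma(y)\iota(y)=\overline\gamma(y\mo)$ means the expression in (7) equals $[x,\overline\gamma(y\mo)]$, so the substitution $y\mapsto y\mo$ converts (7) into (6).

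Finally, (3) $\Rightarrow$ (7) is a direct calculation: expanding $\gamma(x\mo y\mo x^{\gamma(y)}y)$ using the anti-homomorphism property twice and the gamma functional equation $\gamma(x^{\gamma(y)}y)=\gamma(x)\gamma(y)$, together with $\gamma(y\mo)=\gamma(y)\mo$, telescopes everything to $\gamma(x)\gamma(x)\mo=\id$. The reverse direction (7) $\Rightarrow$ (4) is the main obstacle: the plan is to manipulate (6) via the gamma functional equation together with the identity $(g\mo)^{\gamma(g)\mo}=g\omo$ (a consequence of $g\circ g\omo=1$) to extract the invariance $\gamma(g^{\overline\gamma(h)})=\gamma(g)$, then invoke \thref{thm:gamma_morphism_ker} applied to the gamma function $\overline\gamma$ provided by \thref{thm:gamma_inv} to upgrade this $\overline\gamma(G)$-invariance into the equivariance expressed by (4). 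Converting a pointwise triviality condition into a global multiplicative identity is the genuinely subtle step; everything else reduces to bookkeeping within the gamma-functional-equation framework.
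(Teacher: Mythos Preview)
The paper does not prove this theorem; it is quoted from \cite{Caranti_2018} without argument, so there is no in-paper proof to compare against. Your overall architecture is sound: the cycle through (1)$\Leftrightarrow$(2)$\Leftrightarrow$(5), the internal equivalences (3)$\Leftrightarrow$(4) and (6)$\Leftrightarrow$(7), and the implication (3)$\Rightarrow$(7) are all correctly handled with the tools you cite. In particular, your identification $N=\rho_\circ(G)$ via \thref{thm:srg1}(c), the computation $\gamma'(g)=\gamma(g)^{-1}$, and the unpacking of $\gamma'(g)\in\aut(G,\circ)$ and of the $(G,\circ)$-GFE for $\gamma'$ into (4) and (3) respectively are clean and correct.

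The one genuine gap is your closing implication (7)$\Rightarrow$(4). Your Step~B --- invoking \thref{thm:gamma_morphism_ker} for $\overline\gamma$ --- does not apply as stated: condition (6) gives $\gamma([G,\overline\gamma(G)])=\{\id\}$, whereas \thref{thm:gamma_morphism_ker} applied to $\overline\gamma$ would require $\overline\gamma([G,\overline\gamma(G)])=\{\id\}$, and there is no evident passage between these. Fortunately the other two ingredients you list already suffice, and \thref{thm:gamma_morphism_ker} is not needed. From (6) one gets (as you note) $\gamma(g\cdot[g,\overline\gamma(h)])=\gamma(g)$, i.e.\ $\gamma(g^{\overline\gamma(h)})=\gamma(g)$; specialising $h\mapsto y^{-1}$ this reads $\gamma(y^{-1}x^{\gamma(y)}y)=\gamma(x)$. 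Now rewrite the left side using the GFE and your identity $(y^{-1})^{\gamma(y)^{-1}}=y\omo$: setting $z=y\omo\cdot x$ one has $y^{-1}x^{\gamma(y)}=z^{\gamma(y)}$, hence
\[
\gamma(y^{-1}x^{\gamma(y)}y)=\gamma(z^{\gamma(y)}y)=\gamma(z)\gamma(y)=\gamma(y\omo\cdot x)\gamma(y).
\]
Thus $\gamma(y\omo\cdot x)=\gamma(x)\gamma(y)^{-1}=\gamma(x)\gamma(y\omo)$, and since $y\omo$ ranges over $G$ this is exactly (3). So replace your appeal to \thref{thm:gamma_morphism_ker} by this two-line computation and the cycle closes.
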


It is important to notice the role that the kernel of a gamma function plays with respect to both the natural operation on the group and the circle operation induced by the gamma function.

\begin{lemma}[\cite{Caranti_2020}] \thlabel{thm:operation_ker}
    Let $G$ be a finite group and let $\gamma\colon G\to\aut(G)$ be a gamma function on $G$. Denote by $\circ$ the circle operation on $G$ induced by $\g$. Then for every $x,y\in G$, if $y\in\ker\gamma$ we have
    $$x\circ y=x\cdot y$$
    that is, the natural and the circle operations on $G$ agree if the second factor is taken in $\ker\gamma$.
\end{lemma}

\begin{theorem}[\cite{Caranti_2018}] \thlabel{thm:srg3}
    Let $(G,\cdot)$ be a finite group. The following data are equivalent.
    \begin{enumerate}
        \item A bi-skew brace $(G,\cdot,\circ)$ such that $\aut(G,\cdot)\le\aut(G,\circ)$.
        \item A regular normal subgroup $N\norm\hol(G)$.
        \item A regular subgroup $N\le\hol(G)$ which is normalized by $\aut(G)$.
        \item A gamma function $\gamma\colon G\to\aut(G)$ which satisfies $$\gamma(x^\alpha)=\gamma(x)^\alpha\quad\quad\quad\forall x\in G,\,\forall\alpha\in\aut(G).$$
        \item A function $\gamma\colon G\to\aut(G)$ such that
        \[
            \begin{cases}
                \gamma(xy)=\gamma(y)\gamma(x) \\
                \gamma(x^\alpha)=\gamma(x)^\alpha
            \end{cases}
            \quad\quad\quad\forall x\in G,\,\forall\alpha\in\aut(G).
        \]
    \end{enumerate}
\end{theorem}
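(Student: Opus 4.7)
The plan is to reduce every equivalence to \thref{thm:srg1} (which identifies gamma functions with both regular subgroups of $\hol(G)$ and with skew brace operations) and to \thref{thm:srg2} (which packages the bi-skew brace aspect and normalization by $\rho(G)$). I would establish the chain $1\Leftrightarrow 4\Leftrightarrow 3$, separately $4\Leftrightarrow 5$, and finally $3\Leftrightarrow 2$ to close the loop; the \emph{moreover} clause about bi-skew braces will then be immediate from \thref{thm:srg2}.

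For $1\Leftrightarrow 4$: by \thref{thm:srg1}, skew braces correspond to gamma functions via $g\circ h=g^{\gamma(h)}\cdot h$. Expanding the condition $(x\circ y)^\alpha=x^\alpha\circ y^\alpha$ that expresses $\aut(G,\cdot)\le\aut(G,\circ)$ yields $\gamma(y)\alpha=\alpha\gamma(y^\alpha)$, i.e., condition 4. For $3\Leftrightarrow 4$ I would apply item (ii) of \thref{thm:gamma_alpha_lemma}: the subgroup $N$ is $\aut(G)$-invariant iff $\gamma^\alpha=\gamma$ for every $\alpha$, iff $\alpha\mo\gamma(g^{\alpha\mo})\alpha=\gamma(g)$ for all $g,\alpha$, which after letting $\alpha\mo$ range over $\aut(G)$ is exactly condition 4.

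For $4\Leftrightarrow 5$, the direction $4\Rightarrow 5$ requires producing the anti-homomorphism $\gamma(xy)=\gamma(y)\gamma(x)$: specializing condition 4 to $\alpha=\gamma(y)\in\aut(G)$ gives $\gamma(x^{\gamma(y)})=\gamma(x)^{\gamma(y)}$, which is item (iv) of \thref{thm:srg2}, so \thref{thm:srg2} directly yields the anti-homomorphism. Conversely, $5\Rightarrow 4$ reduces to checking that $\gamma$ is a gamma function, which falls out of a short computation using in turn the anti-homomorphism and the equivariance relations of 5:
\[
\gamma(g^{\gamma(h)}h)=\gamma(h)\gamma(g^{\gamma(h)})=\gamma(h)\gamma(g)^{\gamma(h)}=\gamma(h)\gamma(h)\mo\gamma(g)\gamma(h)=\gamma(g)\gamma(h).
\]
Finally, for $2\Leftrightarrow 3$, the implication $2\Rightarrow 3$ is immediate since $\aut(G)\le\hol(G)$, while $3\Rightarrow 2$ follows by chaining: once condition 4 is in force, item (iv) of \thref{thm:srg2} applies and gives normalization of $N$ by $\rho(G)$ too, so $N$ is normalized by the whole $\hol(G)=\langle\rho(G),\aut(G)\rangle$.

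The hard part is $3\Rightarrow 2$, which at first glance looks strictly stronger than its hypothesis: one has to promote normalization by $\aut(G)$ to normalization by the entire holomorph. The key observation is simply that $\gamma(G)\subseteq\aut(G)$, so evaluating the automorphism-equivariance of condition 4 at $\alpha=\gamma(y)$ lands inside the hypothesis set of \thref{thm:srg2}, which already bundles in the $\rho(G)$-normalization. The very same remark makes the \emph{moreover} clause cost-free: any skew brace satisfying (1) is automatically bi-skew by \thref{thm:srg2}.
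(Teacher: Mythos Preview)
The paper does not supply its own proof of this theorem: it is quoted from \cite{Caranti_2018} and stated without argument, so there is no in-paper proof to compare against.

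That said, your proposal is correct and well organized. Each step checks out: the computation $(x\circ y)^\alpha=x^\alpha\circ y^\alpha\iff\gamma(y^\alpha)=\gamma(y)^\alpha$ for $1\Leftrightarrow 4$ is clean; the use of \thref{thm:gamma_alpha_lemma}(ii) to translate $N^\alpha=N$ into $\gamma^\alpha=\gamma$ for $3\Leftrightarrow 4$ is legitimate because the correspondence $N\leftrightarrow\gamma$ of \thref{thm:srg1} is a bijection; the specialization $\alpha=\gamma(y)\in\aut(G)$ to land inside \thref{thm:srg2}(iv) is the key trick and you deploy it correctly both for $4\Rightarrow 5$ and for the upgrade $3\Rightarrow 2$; and your one-line verification of the gamma functional equation from the two hypotheses of (5) is valid. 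The \emph{moreover} clause is indeed free once \thref{thm:srg2} has been invoked.
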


\section{Gamma function on cyclic groups} \label{sec:2}

This small section contains some easy consequences of the general theory of gamma functions, when restricted to the cyclic case. We will see that the behavior of a gamma function on a finite cyclic group only depends on the cardinality of its image, and that there are some special cases when such image has cardinality 2. Here and in the rest of the paper, for an additive group $G$ and an integer $k\in\Z$, we denote by $\sigma_k$ the $k$-th multiple map on $G$, that is $\sigma_k\colon G\to G, x\mapsto kx$. Moreover, we will interpret a cyclic group $\C_m$ as $\Zmod{m}$.

\begin{definition}
    Let $G=\C_m$ be a cyclic group of order $m$, let $q\in\N$ be such that $q\mid m$ and let $\gamma\colon G\to\aut(G)$ be a function on $G$. Identifying any integer with its residue class modulo $q$, we say that $\gamma$ is \textbf{defined modulo $q$} if for every $x,y\in G$ we have $\gamma(x)=\gamma(y)$ if and only if $x\equiv y\pmod{q}$.
\end{definition}

\begin{proposition} \thlabel{thm:gamma_cyclic}
    Let $G=\C_m$ be a cyclic group of order $m$ and let $\gamma\colon G\to\aut(G)$ be a gamma function on $G$. Then $\gamma$ is defined modulo $|\gamma(G)|$.
\end{proposition}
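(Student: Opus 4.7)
The plan is to exploit that $\gamma\colon(G,\circ)\to\aut(G)$ is a group homomorphism (\thref{thm:srg1}(e)) and to transfer its kernel from the circle group back to $(G,\cdot)$. Writing $G=\Zmod{m}$, set $K=\ker(\gamma)$ and $q=|\gamma(G)|$. By the first isomorphism theorem applied to $\gamma$, the set $K$ is a subgroup of $(G,\circ)$ of order $m/q$, and the fibres of $\gamma$ are exactly the $\circ$-cosets $x\circ K$.

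The core step is to show that $K$ is also a subgroup of the original cyclic group $(G,\cdot)$. For any $g\in K$ one has $\gamma(g)=\id$, so the identity $x\circ y=x^{\gamma(y)}\cdot y$ from \thref{thm:srg1}(b) specialises to $x\circ g=x\cdot g$ for every $x\in G$. Applying this with both factors in $K$ gives $g\circ h=g\cdot h$ for all $g,h\in K$, and so closure of $K$ under $\circ$ upgrades to closure under $\cdot$. For inverses, \thref{thm:srg1}(d) yields $\gamma(g^{\obar})=\gamma(g)\mo=\id$, hence $g^{\obar}\in K$; and $1=g\circ g^{\obar}=g\cdot g^{\obar}$ then forces $g\mo=g^{\obar}\in K$. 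Thus $K$ is a subgroup of the cyclic group $\C_m$ of order $m/q$, and by cyclicity it is forced to be the unique such subgroup, namely $q\Zmod{m}$.

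The conclusion is then immediate. Because $x\circ g=x\cdot g$ for every $x\in G$ and $g\in K$, the $\circ$-coset $x\circ K$ coincides as a subset of $G$ with the $\cdot$-coset $x\cdot K$. Hence $\gamma(x)=\gamma(y)$ if and only if $x$ and $y$ lie in the same fibre of $\gamma$, if and only if they lie in the same $\cdot$-coset of $K=q\Zmod{m}$, which is precisely the condition $x\equiv y\pmod{q}$.

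The only real obstacle is the verification that the $\circ$-kernel happens to be simultaneously a $\cdot$-subgroup; once this is in place, cyclicity of $(G,\cdot)$ pins $K$ down as $q\Zmod{m}$ and the matching of cosets is automatic. No prime-power hypothesis is needed: the argument works uniformly for every cyclic group.
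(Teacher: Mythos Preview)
Your proof is correct and follows essentially the same route as the paper's: both identify $\ker\gamma$ (a priori only a $\circ$-subgroup) with the unique subgroup of the cyclic group $(G,\cdot)$ of order $m/q$, and then read off the equivalence $\gamma(x)=\gamma(y)\iff x\equiv y\pmod q$ from the coincidence of $\circ$-cosets and $\cdot$-cosets. The paper simply asserts that $\ker\gamma=\{t\cdot q:0\le t<m/q\}$ and that $x-y\in\ker\gamma\iff x\circ y^{\obar}\in\ker\gamma$, whereas you supply the missing justification via the identity $x\circ g=x\cdot g$ for $g\in K$; this is a welcome clarification rather than a different argument.
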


\begin{proof}
    Denote by $\circ$ the circle operation on $G$ induced by $\g$. Then, there is an isomorphism $\dfrac{(G,\circ)}{\ker\g}\cong\g(G)$, where 
    $\ker\gamma=\left\{t\cdot|\gamma(G)|\, :\, 0\le t<\dfrac{m}{|\gamma(G)|}=|\!\ker\gamma|\right\}$ is the unique cyclic subgroup of $G$ of order $\dfrac{m}{|\g(G)|}$. Then for every $x,y\in G$
    \begin{align*}
        x\equiv y\pmod{|\gamma(G)|}\implies |\gamma(G)|\mid x-y\implies x-y\in\ker\gamma
    \end{align*}
    hence $x=y+k$ for some $k\in\ker\gamma$. Therefore, because of \thref{thm:operation_ker},
    $$\gamma(x)=\gamma(y+k)=\gamma(y\circ k)=\gamma(y)\gamma(k)=\gamma(y).$$   
    Conversely, $$\gamma(x)=\gamma(y)\implies x\circ y\omo\in\ker\gamma\implies x\in(\ker\gamma)\circ y = y\circ (\ker\gamma)$$ that is $x=y\circ k=y+k$ for some $k\in\ker\gamma$, so $x\equiv y\pmod{|\gamma(G)|}$.
    By definition, $\gamma$ is defined modulo $|\gamma(G)|$.
\end{proof}

Now, the two following lemmas are straightforward.

\begin{lemma} \thlabel{thm:gamma_involution}
    Let $G=\C_{m}$ be a cyclic group of even order $m$ and let $\alpha\in\aut(G)$ be an involution. Let $\gamma\colon G\to\aut(G)$ be a function defined by $\gamma(x)=\alpha^x$. Then, $\gamma$ is a gamma function, it is defined modulo 2 and it is both a homomorphism and an anti-homomorphism of groups.
\end{lemma}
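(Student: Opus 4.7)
The plan is to observe that essentially everything follows directly from $\alpha$ being an involution, together with $G$ being abelian. There is no real obstacle here; the statement is almost unpacking definitions.

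First, for "defined modulo $2$": since $\alpha^2=\id$, the value $\alpha^x$ depends only on the parity of $x$, namely $\alpha^x=\id$ if $x$ is even and $\alpha^x=\alpha$ if $x$ is odd. Note that $2\mid m$ by hypothesis, so the condition in the definition of "defined modulo $q$" makes sense. For integer representatives $x,y\in\Z$ of elements of $G=\Zmod{m}$, one direction is immediate ($x\equiv y\pmod 2$ implies $\alpha^x=\alpha^y$), and the converse follows from $\alpha\neq\id$: if $x,y$ have different parities, then $\{\alpha^x,\alpha^y\}=\{\id,\alpha\}$ so $\gamma(x)\neq\gamma(y)$. This establishes the first assertion.

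Next, for the homomorphism property, since the operation on $G$ is addition modulo $m$, I would simply compute
\[
\gamma(x+y)=\alpha^{x+y}=\alpha^x\alpha^y=\gamma(x)\gamma(y),
\]
which is well defined (and consistent with $\gamma$ being defined modulo $2$) because $\alpha$ has order $2$. For the anti-homomorphism property, the powers $\alpha^x,\alpha^y$ both lie in the cyclic subgroup $\langle\alpha\rangle\le\aut(G)$, hence commute, giving $\gamma(x)\gamma(y)=\gamma(y)\gamma(x)$, so $\gamma(x+y)=\gamma(y)\gamma(x)$ as well. This proves both claims simultaneously.

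Since the argument is short and mechanical, the only thing worth flagging is that we never invoke the gamma functional equation here: the lemma is a statement about the function $x\mapsto\alpha^x$ in isolation. Its role later will presumably be to feed into \thref{thm:gamma_morphism_ker} or \thref{thm:srg2} to produce concrete gamma functions (and thus regular subgroups of $\hol(G)$) with particularly simple structure.
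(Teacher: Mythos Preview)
Your proof is correct. The paper itself omits the proof entirely, saying only that the lemma is ``straightforward'', so your direct verification by unpacking the definition of ``defined modulo $2$'' and computing $\gamma(x+y)=\alpha^{x+y}=\alpha^x\alpha^y$ is precisely the kind of argument the authors had in mind.
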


\begin{proof} 
    It is known that an automorphism $\alpha$ of a cyclic group (in additive notation) is of the form $\sigma_h$ for some $h$ coprime with $m$. In particular, such $h$ must be odd, since $m$ is even by assumption. Then, we can show that $\g$ is a gamma function by case distinction. For every $x,y\in G$
    \[
    \begin{aligned}
        x\equiv 0,\, y\equiv 0 \pmod2 &\implies \g(x)=\id, \g(y)=\id \implies \g(x^{\g(y)}+y)=\g(x+y)=\id=\id^2=\g(x)\g(y)\\
        x\equiv 0,\, y\equiv 1 \pmod2 &\implies \g(x)=\id, \g(y)=\alpha \implies \g(x^{\g(y)}+y)=\g(x^\alpha+y)=\g(hx+y)=\alpha=\id\alpha=\g(x)\g(y)\\
        x\equiv 1,\, y\equiv 0 \pmod2 &\implies \g(x)=\alpha, \g(y)=\id \implies \g(x^{\g(y)}+y)=\g(x+y)=\alpha=\alpha\id=\g(x)\g(y)\\
        x\equiv 1,\, y\equiv 1 \pmod2 &\implies \g(x)=\alpha, \g(y)=\alpha \implies \g(x^{\g(y)}+y)=\g(x^\alpha+y)=\g(hx+y)=\id=\alpha^2=\g(x)\g(y)\\
    \end{aligned}
    \]
    Since $\alpha^2=\id$, then by \thref{thm:gamma_cyclic} $\gamma$ is defined modulo 2. Moreover, again by case distinction, we prove that it is a group homomorphism. For every $x,y\in G$
    \[
    \begin{aligned}
        x\equiv 0,\, y\equiv 0 &\implies x+y\equiv 0\pmod2 \implies \gamma(x+y)=\id=\id^2=\gamma(x)\gamma(y)\\
        x\equiv 0,\, y\equiv 1 &\implies x+y\equiv 1\pmod2 \implies \gamma(x+y)=\alpha=\id\alpha=\gamma(x)\gamma(y)\\
        x\equiv 1,\, y\equiv 0 &\implies x+y\equiv 1\pmod2 \implies \gamma(x+y)=\alpha=\alpha\id=\gamma(x)\gamma(y)\\
        x\equiv 1,\, y\equiv 1 &\implies x+y\equiv 0\pmod2 \implies \gamma(x+y)=\id=\alpha^2=\gamma(x)\gamma(y)\\
    \end{aligned}
    \]
    It is also an anti-homomorphism because $G$ is abelian.
\end{proof}

\begin{lemma} \thlabel{thm:gamma_normal}
    Let $G=\C_{2^n}$ be a cyclic group of order $2^n$ and let $\gamma\colon G\to\aut(G)$ be a gamma function on $G$ that is defined modulo 2. Then the regular subgroup $N\le\hol(G)$ associated with $\gamma$ is normal in $\hol(G)$.
\end{lemma}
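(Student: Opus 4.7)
The plan is to invoke part (iv) of \thref{thm:srg3}, which says that the regular subgroup $N$ is normal in $\hol(G)$ if and only if $\gamma(x^\alpha)=\gamma(x)^\alpha$ for all $x\in G$ and all $\alpha\in\aut(G)$. Under the two hypotheses (cyclic of order $2^n$, defined modulo $2$) this equivariance becomes essentially automatic, and the proof splits into verifying the left-hand and right-hand sides separately.

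First I would note two structural facts. Since $\gamma$ is defined modulo $2$, the value $\gamma(x)$ depends only on the parity of $x$, so $|\gamma(G)|=2$ (this is also consistent with \thref{thm:gamma_cyclic}). Secondly, identifying $G$ with $\Zmod{2^n}$, we have $\aut(G)\cong(\Zmod{2^n})^\times$, which is abelian. Abelianness of $\aut(G)$ immediately gives $\gamma(x)^\alpha=\gamma(x)$ for every $\alpha\in\aut(G)$ and every $x\in G$, since conjugation inside an abelian group is trivial.

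To close the argument I would verify that $\gamma(x^\alpha)=\gamma(x)$ as well. Every $\alpha\in\aut(G)$ is multiplication by some odd integer $k\in(\Zmod{2^n})^\times$, so $x^\alpha=kx\equiv x\pmod{2}$ for all $x\in G$. Because $\gamma$ is defined modulo $2$, this congruence yields $\gamma(x^\alpha)=\gamma(x)$. Combining with the previous paragraph, $\gamma(x^\alpha)=\gamma(x)=\gamma(x)^\alpha$, so the equivariance in \thref{thm:srg3}(iv) holds and $N\norm\hol(G)$.

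There is no real obstacle: the argument simply assembles the abelianness of $\aut(\C_{2^n})$, the observation that any automorphism of $\C_{2^n}$ preserves parity (because inverse units mod $2^n$ are odd), and the normality characterization from \thref{thm:srg3}. The only mildly subtle point is to recognize that the hypothesis ``defined modulo $2$'' already pins down the image $\gamma(G)$ to lie in a two-element subgroup of $\aut(G)$, which is precisely what makes both sides of the equivariance reduce to a parity check.
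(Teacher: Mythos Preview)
Your proof is correct and follows essentially the approach implicit in the paper, which omits the proof as ``straightforward'' after setting up \thref{thm:srg3}. You correctly reduce the normality criterion $\gamma(x^\alpha)=\gamma(x)^\alpha$ to the two observations that $\aut(\C_{2^n})$ is abelian (so the right-hand side is just $\gamma(x)$) and that every automorphism is multiplication by an odd unit (so $x^\alpha\equiv x\pmod 2$ and the left-hand side is also $\gamma(x)$).
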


\begin{proof}
    We already know that every automorphism of $\C_{2^n}$ is of the form $\sigma_{2k+1}$ for some $k=0,\dots,2^{n-1}-1$. Then, since $\gamma$ is defined modulo 2 and $\aut(\C_{2^n})$ is abelian, we have
    $$\gamma(x^{\sigma_{2k+1}})=\gamma((2k+1)x)=\gamma(2kx+x)=\gamma(x)=\gamma(x)^{\sigma_{2k+1}}$$
    that is, from \thref{thm:srg3}, the regular subgroup $N\le\hol(G)$ associated with $\gamma$ is normal in $\hol(G)$.
\end{proof}

\section{The classification in the case \texorpdfstring{$p=2$}{p=2}} \label{sec:p2}

This section is the core of the work. We start with some notation and the formal definition of local normalizing graph. After that, we deduce such graph in some small cases, and finally we complete the classification in the case where $G$ is a cyclic $2-$group.

\begin{definition} We denote as follows some relevant $2$-groups
    \begin{center}
        \begin{tabular}{ lc }
        
            $\C_{2^n}=\left\langle x\, :\, x^{2^n}=1\right\rangle$ & \textbf{cyclic group} \\[0.5em]
        
            $\C_2\times\C_{2^{n-1}}=\left\langle x,y\, :\, x^{2^{n-1}}=y^2=1,\, y\mo xy=x\right\rangle$ & \textbf{direct product of cyclic groups} \\[0.5em]
        
            $\Q_{2^n}=\left\langle x,y\, :\, x^{2^{n-1}}=1,\, x^{2^{n-2}}=y^2,\, y\mo xy=x\mo\right\rangle$ & \textbf{quaternion group} \\[0.5em]
        
            $\D_{2^n}=\left\langle x,y\, :\, x^{2^{n-1}}=y^2=1,\, y\mo xy=x\mo\right\rangle$ & \textbf{dihedral group} \\[0.5em]
        
            $\SD_{2^n}=\left\langle x,y\, :\, x^{2^{n-1}}=y^2=1,\, y\mo xy=x^{2^{n-2}-1}\right\rangle$ & \textbf{semi-dihedral group} \\[0.5em]
        
            $\M_{2^n}=\left\langle x,y\, :\, x^{2^{n-1}}=y^2=1,\, y\mo xy=x^{2^{n-2}+1}\right\rangle$ & \textbf{modular group} \\[0.5em]
        
        \end{tabular}
    \end{center}
\end{definition}

\begin{definition}
    Let $G$ be a finite group. The \textbf{local normalizing graph} of $G$ is the (undirected) graph whose vertices are the regular subgroups of $\hol(G)$, and two vertices $N$ and $M$ are connected by an edge $\{N,M\}$ if and only if $N$ and $M$ mutually normalize each other in $\sym(G)$.
\end{definition}

\subsection{Small cases}

Tacitly using the important characterization in \thref{thm:srg1}, here and in the rest of the paper, we begin the case studies with some trivial cases, that is, cyclic groups with order a small power of 2. 

\begin{proposition} \thlabel{thm:c2g1}
    The map $\gamma_1\colon\C_2 \to\aut(\C_2)$ defined by
    \[
    \gamma_1(x)=\sigma_1\qquad\forall x\in\C_2
    \]    
    is a gamma function on $\C_2$, and the associated regular subgroup $N_1\le\hol(\C_2)$ is isomorphic to $\C_2$. Moreover, $N_1$ is normal in $\hol(\C_2)$ and $\gamma_1$ is the unique gamma function on $\C_2$.
\end{proposition}

\begin{proof}
    The trivial map is a gamma function because the gamma functional equation is trivially satisfied, and that the associated regular subgroup of $\hol(\C_2)$ is $\rho(\C_2)\cong\C_2$, which is also normal in $\hol(\C_2)$. The uniqueness follows from the fact that $\aut(\C_2)=\{\sigma_1\}$ is a singleton, hence there is only a unique way to construct a function whose codomain is $\aut(\C_2)$. 
\end{proof}

\begin{theorem}[Local normalizing graph of $\C_2$] \thlabel{thm:c2graph}
    The local normalizing graph of $\C_2$ is
    
    \begin{center}
        \begin{tikzpicture}[
            minimum size=\nodeSize,
            node distance=\nodeDist,
            thick, 
            main/.style = {draw=black, fill=white,  circle},
            ] 
            \node[main] (1) {$N_1$};  
        \end{tikzpicture} 
    \end{center}
\end{theorem}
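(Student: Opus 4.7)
The plan is to reduce the statement to \thref{thm:c2g1}, which has already enumerated the gamma functions on $\C_2$. By the equivalence in \thref{thm:srg1}, the regular subgroups of $\hol(\C_2)$---that is, the vertices of the local normalizing graph---correspond bijectively to gamma functions $\gamma\colon\C_2\to\aut(\C_2)$. Since $\aut(\C_2)$ is trivial, the only gamma function is the constant one $\gamma_1$, whose associated regular subgroup is $N_1$.

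With uniqueness in hand, the local normalizing graph has a single vertex $N_1$. Because an edge is drawn only between a pair of vertices $N$ and $M$, and self-loops are not considered in this definition, no edges can appear. This matches exactly the picture displayed in the statement.

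There is no real obstacle here: the theorem is essentially immediate from \thref{thm:c2g1} once one observes that the triviality of $\aut(\C_2)$ forces $|\hol(\C_2)|=2$, so that $N_1=\rho(\C_2)=\hol(\C_2)$ is the only regular subgroup one could possibly write down. This order-based observation could serve as an independent sanity check, but the bijection with gamma functions already closes the argument.
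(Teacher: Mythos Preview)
Your argument is correct and matches the paper's approach: the paper does not even write out a proof for this theorem, treating it as an immediate consequence of \thref{thm:c2g1}, which is exactly what you do. Your additional order-based sanity check is sound but unnecessary.
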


\begin{proposition} \thlabel{thm:c4g12}
    The maps $\gamma_1,\gamma_2\colon\C_4 \to\aut(\C_4)$ defined by
    \[
    \gamma_1(x)=\sigma_1,\qquad\gamma_2(x)=\sigma_3^x\qquad\forall x\in\C_4
    \]
    are gamma functions on $\C_4$, and the associated regular subgroups $N_1,N_2\le\hol(\C_4)$ are isomorphic respectively to $\C_4$ and $\V$, where $\V$ denotes the Klein group. Moreover, $N_1$ and $N_2$ are normal in $\hol(\C_4)$,  $\{\gamma_1,\gamma_2\}$ are the unique gamma functions on $\C_4$, and $\{N_1,N_2\}$ are mutually normalizing regular subgroups of $\hol(\C_4)$.
\end{proposition}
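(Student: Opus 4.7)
The plan is to verify each claim in the statement directly by combining the abstract results about gamma functions (Theorem \ref{thm:srg1} and Theorem \ref{thm:srg3}) with the two cyclic-specific lemmas \ref{thm:gamma_involution} and \ref{thm:gamma_normal}, and then to obtain uniqueness via a short counting argument based on \thref{thm:gamma_cyclic}. Recall that $\aut(\C_4)=\{\sigma_1,\sigma_3\}\cong\C_2$, where $\sigma_1=\id$ and $\sigma_3$ is the inversion $x\mapsto -x$.

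First I would handle $\gamma_1$. Being the constant map to $\id$, it trivially satisfies the gamma functional equation, so it is a gamma function. The associated regular subgroup is $N_1=\rho(\C_4)\cong\C_4$, which is normal in $\hol(\C_4)=\rho(\C_4)\rtimes\aut(\C_4)$ by construction. Next I would treat $\gamma_2$. By \thref{thm:gamma_involution}, applied to the involution $\sigma_3\in\aut(\C_4)$, the map $\gamma_2(x)=\sigma_3^{\+ x}$ is both a homomorphism and an anti-homomorphism, hence a gamma function; it is defined modulo $2$, so \thref{thm:gamma_normal} gives $N_2\norm\hol(\C_4)$. To identify the isomorphism type $N_2\cong\V$, I would compute the circle operation $x\circ y=x^{\gamma_2(y)}+y\pmod 4$, noting that $1\circ 1= -1+1=0$ and $2\circ 2=2+2=0$, so every element of $(\C_4,\circ)$ has order at most $2$; since $(\C_4,\circ)$ has order $4$, it must be $\V$, and then item (f) of \thref{thm:srg1} transfers this isomorphism to $N_2$.

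For the uniqueness of $\{\gamma_1,\gamma_2\}$, I would invoke \thref{thm:gamma_cyclic}: any gamma function $\gamma$ on $\C_4$ is defined modulo $|\gamma(\C_4)|$, and since $|\aut(\C_4)|=2$ we have $|\gamma(\C_4)|\in\{1,2\}$. Using the fact that the gamma functional equation with $g=h=0$ forces $\gamma(0)=\id$, the case $|\gamma(\C_4)|=1$ yields exactly $\gamma_1$, while in the case $|\gamma(\C_4)|=2$ the congruence $\gamma(0)=\gamma(2)=\id$ and $\gamma(1)=\gamma(3)=\sigma_3$ is forced, reproducing $\gamma_2$. Finally, the mutual normalization $\{N_1,N_2\}$ is automatic, since both $N_1$ and $N_2$ are normal in $\hol(\C_4)$, so each is in particular normalized by the other.

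I do not foresee a genuine obstacle here: the whole argument is a short verification leaning on the preliminary machinery. The only step requiring an actual computation is the recognition $N_2\cong\V$ via the circle operation, and even that reduces to two additions mod~$4$.
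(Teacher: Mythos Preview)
Your proposal is correct and follows essentially the same route as the paper, which records the result as a direct consequence of \thref{thm:srg1} and \thref{thm:srg3}; you simply make the verification explicit and, for normality and uniqueness, invoke the cyclic-specific auxiliary results \thref{thm:gamma_involution}, \thref{thm:gamma_normal}, and \thref{thm:gamma_cyclic} in place of the bare citation of \thref{thm:srg3}. The only spot worth tightening is the phrase ``hence a gamma function'' for $\gamma_2$: being a homomorphism is not by itself the gamma functional equation, so you should add the one-line observation that $\gamma_2$ is defined modulo~$2$ and $x^{\gamma_2(y)}=\pm x$ has the same parity as $x$, whence $\gamma_2(x^{\gamma_2(y)})=\gamma_2(x)$ and \thref{thm:gamma_morphism_ker} (or \thref{thm:srg2}(v)) closes the gap.
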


\begin{proof}
    It is a direct consequence of \thref{thm:srg1} and \thref{thm:srg3}.
\end{proof}

\begin{theorem}[Local normalizing graph of $\C_4$] \thlabel{thm:c4graph}
    The local normalizing graph for $\C_4$ is
    
    \begin{center}
        \begin{tikzpicture}[
            minimum size=\nodeSize,
            node distance=\nodeDist,
            thick, 
            main/.style = {draw=black, fill=white,  circle},
            ] 
            \node[main] (1) {$N_1$}; 
            \node[main] (2) [below of=1] {$N_2$}; 

            \draw (2) -- (1);
        \end{tikzpicture} 
    \end{center}
\end{theorem}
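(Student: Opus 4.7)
The plan is essentially to read off the graph directly from \thref{thm:c4g12}, since that proposition already does all the heavy lifting. First, I would note that the vertex set of the local normalizing graph is, by definition, the set of regular subgroups of $\hol(\C_4)$. By the bijective correspondence in \thref{thm:srg1} between gamma functions on $G$ and regular subgroups of $\hol(G)$, and by the uniqueness claim in \thref{thm:c4g12} that $\{\gamma_1,\gamma_2\}$ are the \emph{only} gamma functions on $\C_4$, the vertex set is exactly $\{N_1, N_2\}$.

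Next, for the edges I would invoke the second part of \thref{thm:c4g12}: $N_1$ and $N_2$ are mutually normalizing in $\sym(\C_4)$, so by the definition of the local normalizing graph there is an edge $\{N_1,N_2\}$. Since the graph has only two vertices, there are no further edges to consider (self-loops being excluded by the convention used also for the graph of $\C_2$ in \thref{thm:c2graph}).

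There is essentially no hard step: everything reduces to citing \thref{thm:c4g12}. The only thing worth double-checking is the uniqueness of the gamma functions, which in turn rests on \thref{thm:srg3}: since $\aut(\C_4)\cong\C_2$ is tiny, the only candidate gamma functions are the trivial one (giving $N_1\cong\C_4$) and the one sending $x$ to $\sigma_3^x$ (giving $N_2\cong\V$, the Klein four-group), both of which are automatically normal in $\hol(\C_4)$ by \thref{thm:gamma_normal}. Hence the graph is exactly the two-vertex path displayed in the statement.
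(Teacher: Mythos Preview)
Your proposal is correct and matches the paper's approach: the paper states \thref{thm:c4graph} without proof, since it follows immediately from \thref{thm:c4g12}, which is exactly what you do. One tiny nitpick: \thref{thm:gamma_normal} as stated applies to gamma functions defined modulo $2$, so it covers $\gamma_2$ but not the constant $\gamma_1$; however, $N_1=\rho(G)$ is trivially normal in $\hol(G)$, so this does not affect the argument.
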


\subsection{General case}

Once excluded some trivial and degenerate cases, the behavior of the local normalizing graphs turns out to be surprisingly easy to describe, using the language of gamma functions.

\subsubsection{Existence problem}

\begin{proposition} \thlabel{thm:c2ng1234}
    Let $G=\C_{2^n}$ be a cyclic group of order $2^n$ with $n\ge 3$. Then the following maps $G \to\aut(G)$ are gamma functions on $G$.    
    
    \begin{table}[H]
    \centering
    {\tabulinesep=1.6mm
    \begin{tabu}{lccc}
    \hline
    \shortstack[l]{Gamma\\function} & \shortstack{Defined\\modulo} & \shortstack{Isomorphism\\class} & \shortstack{Normal\\subgroup} \\
    \hline
    $\g_1(x)=\sigma_1$ & $1$ & $\C_{2^n}$ & yes  \\
    $\g_2(x)=\sigma_{2^{n-1}+1}^x$ & $2$ & $\C_{2^n}$ & yes  \\
    $\g_3(x)=\sigma_{2^{n-1}-1}^x$ & $2$ & $\Q_{2^n}$ & yes  \\
    $\g_4(x)=\sigma_{2^{n}-1}^x$ & $2$ & $\D_{2^n}$ & yes \\[-15pt] 
    $\gamma_5(x)=\left\langle
    \begin{array}{lr}
        \sigma_1 & x\equiv 0 \pmod4\\
        \sigma_{2^{n-1}-1} & x\equiv 1 \pmod4\\
        \sigma_{2^{n-1}+1} & x\equiv 2 \pmod4\\
        \sigma_{2^{n}-1} & x\equiv 3 \pmod4\\
    \end{array}
    \right.$     
    & $4$ & $\SD_{2^n}$ & no \\[-25pt]
    $\gamma_6(x)=\left\langle
    \begin{array}{lr}
        \sigma_1 & x\equiv 0 \pmod4\\
        \sigma_{2^{n}-1} & x\equiv 1 \pmod4\\
        \sigma_{2^{n-1}+1} & x\equiv 2 \pmod4\\
        \sigma_{2^{n-1}-1} & x\equiv 3 \pmod4\\
    \end{array}
    \right.$
    & $4$ & $\SD_{2^n}$ & no \\[-15pt]
    \hline
\end{tabu}}
\end{table}
\end{proposition}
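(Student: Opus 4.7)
The plan is to verify, for each of the six maps $\gamma_i$, four assertions: that it is a gamma function, the stated modulus, the isomorphism type of the associated regular subgroup $N_i$, and whether $N_i\norm\hol(G)$. The map $\gamma_1$ is constantly $\sigma_1=\id$, so all four items are immediate, with $N_1=\rho(G)\cong\C_{2^n}$ normal by definition. For $\gamma_2,\gamma_3,\gamma_4$, each of the form $\gamma_i(x)=\sigma^x$ for an involution $\sigma\in\aut(G)$, \thref{thm:gamma_involution} already supplies that $\gamma_i$ is a homomorphism defined modulo $2$. To invoke \thref{thm:gamma_morphism_ker} and conclude the gamma functional equation, I will show $\gamma_i([G,\gamma_i(G)])=\{\id\}$: every $\sigma_k\in\aut(G)$ has $k$ odd, so $[g,\sigma_k]=(k-1)g\in 2G$, and $\gamma_i$ is trivial on $2G$ because $\sigma$ is an involution. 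Normality of $N_2,N_3,N_4$ then follows from \thref{thm:gamma_normal}.

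The maps $\gamma_5$ and $\gamma_6$ are defined modulo $4$ by direct inspection of their four pairwise distinct values. Their gamma functional equation requires a different argument, since neither is a group homomorphism on $(G,+)$. The key observation, valid for $n\ge 3$, is that $2^{n-1}\equiv 0\pmod 4$, so all four automorphisms appearing in the images, namely $\sigma_1,\sigma_{2^{n-1}\pm 1},\sigma_{2^n-1}$, reduce to $\pm 1\bmod 4$; concretely $\gamma_i(h)(g)\equiv(-1)^h g\pmod 4$ for $i=5,6$. Since both sides of the gamma functional equation depend only on their arguments modulo $4$, the verification reduces to checking that the induced map $\delta\colon\Z/4\Z\to\V$ into the $2$-torsion of $\aut(G)$ is a homomorphism from $(\Z/4\Z,\circ_i)$ onto $\V$; this is a finite computation over at most $16$ sub-cases.

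Identifying the isomorphism type of each $N_i$ uses $g\circ_i h=g^{\gamma_i(h)}\cdot h$ from \thref{thm:srg1}(b). For $i\ge 2$ I would take $x=2$ and $y=1$: since $2\in\ker\gamma_i$, the element $x$ has the same order $2^{n-1}$ under $\circ_i$ as under $+$ and generates an index-$2$ subgroup of $(G,\circ_i)$. A short computation yields $y^{\circ_i 2}$ and the conjugate $y\mo\circ_i x\circ_i y$; these match the defining relations of $\C_{2^n},\Q_{2^n},\D_{2^n},\SD_{2^n},\SD_{2^n}$ for $i=2,3,4,5,6$ respectively.

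Finally, for normality of $N_5,N_6$: since $\aut(G)$ is abelian, \thref{thm:srg3}(4) reduces normality in $\hol(G)$ to $\gamma_i(x^\alpha)=\gamma_i(x)$ for all $\alpha\in\aut(G)$, $x\in G$. Taking $\alpha=\sigma_3$ and $x=1$, we have $1^\alpha=3$ and $\gamma_5(1)=\sigma_{2^{n-1}-1}\ne\sigma_{2^n-1}=\gamma_5(3)$ for $n\ge 3$, so $N_5$ is not normal, and an identical argument applies to $\gamma_6$. The main obstacle across the whole proof is the semi-dihedral case: verifying the precise conjugation $y\mo\circ_i x\circ_i y=x^{2^{n-2}-1}$ in $(G,\circ_5)$ and $(G,\circ_6)$ requires careful parity-dependent computation on the exponents in $\Z/2^n\Z$, but remains elementary once split into the four residue classes modulo $4$.
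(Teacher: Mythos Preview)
Your overall strategy matches the paper's: both handle $\gamma_1,\ldots,\gamma_4$ via \thref{thm:gamma_involution}, \thref{thm:gamma_morphism_ker}, and \thref{thm:gamma_normal}, and then identify each $(G,\circ_i)$ by computing with explicit generators. Your reduction of the gamma functional equation for $\gamma_5,\gamma_6$ to a finite check modulo $4$, and your non-normality argument via \thref{thm:srg3} with $\alpha=\sigma_3$, are correct and in fact more explicit than the paper, which only says ``in a similar way'' and records the generators.

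There is, however, one concrete error in your isomorphism-type argument. You assert that $2\in\ker\gamma_i$ for all $i\ge 2$, but this is false for $i=5,6$: by definition $\gamma_5(2)=\gamma_6(2)=\sigma_{2^{n-1}+1}\ne\id$. The conclusion you want, namely that $x=2$ has $\circ_i$-order $2^{n-1}$ and that $\langle 2\rangle_{\circ_i}=2G$, is still true, but the correct reason is that the values $\gamma_i(2m)\in\{\sigma_1,\sigma_{2^{n-1}+1}\}$ act trivially on even elements (since $(2a)(2^{n-1}+1)\equiv 2a\pmod{2^n}$), whence $(2a)\circ_i(2b)=2a+2b$ for all $a,b$. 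A related complication: your choice $y=1$ gives $1\circ_5 1=(2^{n-1}-1)+1=2^{n-1}\ne 0$, so $y$ has $\circ_5$-order $4$, not $2$, and the relations you obtain are $y^{\circ 2}=x^{\circ 2^{n-2}}$ and $y^{\omo}\circ x\circ y=x^{\circ(2^{n-2}-1)}$ rather than the standard semidihedral presentation with $s^2=1$. This is still a presentation of $\SD_{2^n}$ (it corresponds to the generator $sr$ instead of $s$), but it requires an extra identification step; the paper avoids this by taking $y=3$ for $N_5$ and $N_6$, for which $3\circ_5 3=3(2^n-1)+3\equiv 0$.
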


\begin{proof}
    For $\g_1$ the proof is trivial, and similar to that of \thref{thm:c2g1}. To show that $\g_2$ is a gamma function, we first use \thref{thm:gamma_involution}, and then the equivalent conditions of \thref{thm:gamma_morphism_ker}. Denote by $\circ$ and by $N_2$ the circle operation and the regular subgroup of $\hol(G)$ associated with $\g_2$. Observe that the latter is normal in $\hol(G)$ because of \thref{thm:gamma_normal}. To determine its isomorphism class, we find directly the generators of a presentation of $(G,\circ)$. Since the powers of $1$ with respect to the circle operation $\circ$ are
     \begin{equation}
        1^{\circ k}=\left\langle
            \begin{array}{lr}
                k & \quad\mbox{if}\quad k\equiv 0,1 \pmod4\\
                2^{n-1}+k & \quad\mbox{if}\quad k\equiv 2,3 \pmod4
            \end{array}
        \right.\quad\forall k\in\N
    \end{equation}
    it is easy to conclude that $(G,\c)=\left\langle r\, :\,r^{\c 2^n}=0\right\rangle$ is a cyclic group of order $2^n$, where $r=1$. 
    
    In a similar way we prove the claim for all the other functions in the table. To show that they are gamma functions, we use both \thref{thm:gamma_involution} and \thref{thm:gamma_morphism_ker} for $\g_3$ and $\g_4$, and we proceed by a direct verification for $\g_5$ and $\g_6$. To show that $\g_5$ and $\g_6$ are associated with a regular subgroup which is not normal we observe that they are not anti-homomorphisms of groups and apply \thref{thm:srg3}. In particular, we report in the following table some information about the generators of all the associated skew braces.
    
    \def\arraystretch{1.5}%
    \begin{table}[h]
        \centering
        \begin{tabular}{|c|c|c|c|}
            \hline
             Regular subgroup & Isomorphism class & Generators of $(G,\circ)$ & Relations of $(G,\circ)$ \\
            \hline
            
            \multirow{ 3}{*}{$N_3$} & \multirow{ 3}{*}{$\Q_{2^n}$} & \multirow{ 3}{*}{$1,2$} & $2^{\circ k}=2k$ \\
            & & & $1\c 1=2^{n-1}$ \\
            & & & $1\omo\c 2\c 1 =2\omo$ \\
            
            \hline
            
            \multirow{ 3}{*}{$N_4$} & \multirow{ 3}{*}{$\D_{2^n}$} & \multirow{ 3}{*}{$1,2$} & $2^{\circ k}=2k$ \\
            & & & $1\c 1=0$ \\
            & & & $1\omo\c 2\c 1 =2\omo$ \\
            
            \hline
            
            \multirow{ 3}{*}{$N_5$} & \multirow{ 3}{*}{$\SD_{2^n}$} & \multirow{ 3}{*}{$2,3$} & $2^{\circ k}=2k$ \\
            & & & $3\c 3=0$ \\
            & & & $3\omo\c 2\c 3 =2^{\c (2^{n-2}-1)}$ \\
            
            \hline
        \end{tabular}
    \end{table}
    
    Once known the isomorphism class of $\g_5$, there is a shortcut for $\g_6$: we conjugate the gamma function $\g_5$ under the automorphism $\inv\in\aut(G)$ as in \thref{thm:gamma_alpha_lemma}, from which we also know that the isomorphism class of the associated regular subgroup does not change. \qedhere
\end{proof}

To show the existence of the remaining gamma functions, we proceed in a similar manner. The following standard arithmetical facts will be useful (the proofs are omitted).

\begin{lemma} \thlabel{thm:five_lemma_mod_1}
    For every $n\in\N,n\ge 4$ and every $k\in\N, 1\le k \le 2^{n-1}$ we have $$\frac{5^k-1}{2}\equiv 0\pmod{2^n}\iff k=2^{n-1}.$$
\end{lemma}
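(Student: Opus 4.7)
The plan is to reduce the divisibility statement to a computation of the $2$-adic valuation $v_2(5^k-1)$.

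First, I would observe that $\frac{5^k-1}{2}\equiv 0\pmod{2^n}$ is equivalent to $5^k\equiv 1\pmod{2^{n+1}}$, i.e.\ to $v_2(5^k-1)\ge n+1$. This rephrases the problem as determining the multiplicative order of $5$ in $(\Z/2^{n+1}\Z)^{*}$.

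Next, I would establish the identity $v_2(5^k-1)=2+v_2(k)$ for every $k\ge 1$. For odd $k$, the factorization $5^k-1=(5-1)(5^{k-1}+5^{k-2}+\cdots+1)$ has second factor a sum of $k$ odd terms, hence odd, so $v_2(5^k-1)=v_2(4)=2=2+v_2(k)$. For even $k$, write $k=2^a m$ with $m$ odd and $a\ge 1$, and proceed by induction on $a$, using the identity $5^{2j}-1=(5^j-1)(5^j+1)$ together with the observation that $5^j+1\equiv 2\pmod 4$ for every $j\ge 1$ (since $5^j\equiv 1\pmod 4$). Each squaring step thus increases the $2$-adic valuation by exactly one, and an easy induction yields the claimed formula.

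Combining the two steps, $v_2(5^k-1)\ge n+1$ if and only if $v_2(k)\ge n-1$, i.e.\ $2^{n-1}\mid k$. Within the range $1\le k\le 2^{n-1}$, this forces $k=2^{n-1}$, as required. The argument is essentially a routine $2$-adic computation; the only minor subtlety is that the naive lifting-the-exponent formula fails at $p=2$ for odd exponents, so the odd and even cases have to be handled separately, as above.
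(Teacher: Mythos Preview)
Your argument is correct: the reduction to $v_2(5^k-1)\ge n+1$ is immediate, and the identity $v_2(5^k-1)=2+v_2(k)$ is established cleanly by separating the odd and even cases and using $v_2(5^j+1)=1$ for the inductive step. The paper itself omits the proof entirely, labelling the lemma a ``standard arithmetical fact'', so there is nothing to compare against; your $2$-adic valuation computation is exactly the expected standard argument (and in fact works for all $n\ge 1$, not just $n\ge 4$).
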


\begin{lemma} \thlabel{thm:five_lemma_mod_2}
    For every $n\in\N,n\ge 4$, we have $$\frac{5^{2^{n-2}+1}-1}{2}\equiv 2^{n-1}+2\pmod{2^n}.$$
\end{lemma}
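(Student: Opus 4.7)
The plan is to reduce the statement to computing $5^{2^{n-2}+1}$ modulo $2^{n+1}$. Indeed, dividing by $2$ turns a congruence modulo $2^{n+1}$ into one modulo $2^n$, so it suffices to show
$$5^{2^{n-2}+1} \equiv 2^n + 5 \pmod{2^{n+1}},$$
and then the result follows by subtracting $1$ and halving both sides to get $2 + 2^{n-1}$.

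The key technical step, and the one I expect to be the main obstacle, is the auxiliary claim
$$5^{2^{n-2}} \equiv 1 + 2^n \pmod{2^{n+1}}, \qquad n \ge 3.$$
This is morally the same content as \thref{thm:five_lemma_mod_1}, which already asserts that $5$ has order exactly $2^{n-2}$ modulo $2^n$ (equivalently, order $2^{n-1}$ modulo $2^{n+1}$), so $5^{2^{n-2}}$ is a nontrivial square root of $1$ modulo $2^{n+1}$. What still needs to be pinned down is the precise residue, which I would establish by induction on $n$. The base case $n = 3$ is the direct computation $5^{2} = 25 \equiv 1 + 2^{3} \pmod{2^{4}}$. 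For the inductive step, if $5^{2^{n-2}} = 1 + 2^n + 2^{n+1} \cdot t$ for some integer $t$, then squaring gives
$$5^{2^{n-1}} = (1 + 2^n + 2^{n+1} t)^2 = 1 + 2^{n+1} + 2^{n+2}t + 2^{2n} + \text{higher terms},$$
and since $2n \ge n+2$ for $n \ge 2$, everything after $1 + 2^{n+1}$ vanishes modulo $2^{n+2}$, yielding $5^{2^{n-1}} \equiv 1 + 2^{n+1} \pmod{2^{n+2}}$, as required.

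Once the auxiliary claim is in place, the conclusion is just one line: multiply by $5$ to obtain
$$5^{2^{n-2}+1} \equiv 5(1 + 2^n) = 5 + 5 \cdot 2^n \equiv 5 + 2^n \pmod{2^{n+1}},$$
where the last step uses $5 \cdot 2^n = 2^n + 2^{n+2} \equiv 2^n \pmod{2^{n+1}}$. Subtracting $1$ and dividing by $2$ gives
$$\frac{5^{2^{n-2}+1} - 1}{2} \equiv \frac{4 + 2^n}{2} = 2 + 2^{n-1} \pmod{2^n},$$
which is the desired congruence. The restriction $n \ge 4$ is needed only to ensure that $2 + 2^{n-1}$ and $2^n$ are distinct modulo $2^n$ in a nontrivial way; the algebra itself goes through for $n \ge 3$.
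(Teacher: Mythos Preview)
Your proof is correct. The paper explicitly omits the proof of this lemma (it states ``the proofs are omitted'' immediately before \thref{thm:five_lemma_mod_1}, \thref{thm:five_lemma_mod_2}, and \thref{thm:five_lemma_mod_3}), so there is nothing to compare against; your argument via the standard inductive computation of $5^{2^{n-2}}\pmod{2^{n+1}}$ followed by multiplication by $5$ is a clean and complete way to fill the gap. Your closing remark about the restriction $n\ge 4$ is slightly off --- the congruence holds and is nontrivial already for $n=3$ (one checks $62\equiv 6\pmod 8$); the bound $n\ge 4$ in the paper is simply inherited from the context in which the lemma is applied (the groups $\M_{2^n}$ and $\SD_{2^n}$), not from any failure of the arithmetic.
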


\begin{lemma} \thlabel{thm:five_lemma_mod_3}
    For every $n,k,u\in\N$ such that $n\ge 4,\ 2\le u < n$ and $1\le k \le 2^n$ we have $$2^{-u}\left[(2^u+1)^k-1\right]\equiv 0\pmod{2^n}\iff k=2^n.$$
\end{lemma}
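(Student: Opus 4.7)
The plan is to reduce the statement to computing the 2-adic valuation $v_2\bigl((2^u+1)^k - 1\bigr)$, for which I would establish the cleaner formula
\[
v_2\bigl((2^u+1)^k - 1\bigr) = u + v_2(k).
\]
Once this is shown, dividing by $2^u$ gives $v_2\bigl(2^{-u}[(2^u+1)^k - 1]\bigr) = v_2(k)$, so congruence to $0 \pmod{2^n}$ is equivalent to $2^n \mid k$, and the constraint $1 \le k \le 2^n$ picks out $k = 2^n$ as the unique solution. Both directions of the ``iff'' then follow at once.

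The quickest route to the valuation formula is to apply the Lifting the Exponent Lemma at $p=2$: for odd integers $a,b$ with $4 \mid a - b$, one has $v_2(a^k-b^k) = v_2(a-b) + v_2(k)$. Applied with $a = 2^u+1$ and $b = 1$, the hypothesis $u \ge 2$ is precisely what guarantees $4 \mid 2^u = a-b$, so the lemma applies directly and yields the identity.

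If one prefers a self-contained argument, I would induct on $v_2(k)$ using the factorization
\[
(2^u+1)^{2m}-1 = \bigl((2^u+1)^m-1\bigr)\bigl((2^u+1)^m+1\bigr).
\]
Since $u \ge 2$ we have $(2^u+1)^m \equiv 1 \pmod 4$, so the second factor is $\equiv 2 \pmod 4$ and contributes exactly one power of $2$ per doubling of the exponent. The base case $v_2(k)=0$ (odd $k$) follows from the binomial expansion
\[
(2^u+1)^k - 1 = k\cdot 2^u + \binom{k}{2}2^{2u} + \cdots ,
\]
whose leading term has 2-adic valuation exactly $u$ (because $k$ is odd) while every subsequent term has valuation at least $2u > u$.

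The only potential obstacle is to verify carefully the 2-adic hypothesis $4 \mid a-b$ needed to invoke LTE (or equivalently to check the congruence $(2^u+1)^m \equiv 1 \pmod 4$ powering the induction step); both hold precisely because $u \ge 2$, which explains the appearance of this bound in the statement. The rest of the argument is entirely routine.
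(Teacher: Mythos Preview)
Your argument is correct. The valuation identity $v_2\bigl((2^u+1)^k-1\bigr)=u+v_2(k)$ follows exactly as you describe, either from LTE at $p=2$ (the hypothesis $u\ge 2$ ensures $4\mid 2^u$) or from the doubling induction, and it immediately reduces the congruence to $2^n\mid k$, hence to $k=2^n$ under the range constraint.

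As for comparison with the paper: there is nothing to compare. The paper explicitly omits the proof of this lemma, labeling it a ``standard arithmetical fact'' alongside Lemmas~4.4 and~4.5. Your write-up therefore supplies precisely what the paper chose to leave out, and does so along the most natural line.
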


\begin{proposition} \thlabel{thm:c2ngpmc}
    Let $G=\C_{2^n}$ be a cyclic group of order $2^n$ with $n\ge 4$. Then the following maps $G \to\aut(G)$ are gamma functions on $G$.
    \begin{table}[H]
    \centering
    {\tabulinesep=1.6mm
    \begin{tabu}{lccc}
        \hline
        \shortstack[l]{Gamma\\function} & \shortstack{Defined\\modulo} & \shortstack{Isomorphism\\class} & \shortstack{Normal\\subgroup} \\
        \hline
        $\g_{\mathsf{p}}(x)=\sigma_{2x+1}$ & $2^{n-1}$ & $\C_2\times\C_{2^{n-1}}$ & no  \\
        $\gamma_\mathsf{m}(x)=\left\langle
        \begin{array}{lr}
            \sigma_{2x+1} & x\equiv 0 \pmod2\\
            \sigma_{2x+2^{n-2}+1} & x\equiv 1 \pmod2
        \end{array}
        \right.$  
        & $2^{n-1}$ & $\M_{2^{n}}$ & no \\ 
        $\g_{\mathsf{c},u}(x)=\sigma_{2^ux+1},\qquad 2\le u\le n-2$ & $2^{n-u}$ & $\C_{2^{n}}$ & no \\
        \hline
    \end{tabu}}
    \end{table}
\end{proposition}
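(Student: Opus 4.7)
Plan: I would establish the four table entries for each of $\gamma_{\mathsf{p}}$, $\gamma_{\mathsf{m}}$, $\gamma_{\mathsf{c},u}$ in turn. Claim (ii) is immediate from \thref{thm:gamma_cyclic} once the size of $\gamma(G)$ is counted, and claim (iv) (not normal) follows from \thref{thm:srg3}(iv): since $\aut(G)$ is abelian, $\gamma(x)^\alpha=\gamma(x)$ for every $\alpha\in\aut(G)$, so it suffices to exhibit one pair $(x,\alpha)$ with $\gamma(x^\alpha)\ne\gamma(x)$; taking $\alpha=\sigma_3$ and $x=1$, a direct evaluation using the explicit formulas disposes of all three cases (for $\gamma_{\mathsf{c},u}$ this needs $u<n-1$, so that $3\not\equiv 1\pmod{2^{n-u}}$). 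The bulk of the work is thus (i) and (iii).

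For (i), the gamma functional equation $\gamma(g^{\gamma(h)}+h)=\gamma(g)\gamma(h)$ reduces to an arithmetic identity in $\Zmod{2^n}$. For $\gamma_{\mathsf{p}}$ and $\gamma_{\mathsf{c},u}$ both sides collapse to $\sigma_{2^{2u}gh+2^u g+2^u h+1}$ after expanding $(2^u g+1)(2^u h+1)$, a one-line check. For $\gamma_{\mathsf{m}}$ I would perform a four-case analysis on the parities of $g$ and $h$: using that $2^{n-1}h$ vanishes modulo $2^n$ when $h$ is even and equals $2^{n-1}$ when $h$ is odd, and that the cross-term $2^{2n-4}$ arising from $(2g+2^{n-2}+1)(2h+2^{n-2}+1)$ is absorbed modulo $2^n$ precisely when $n\ge 4$, all four cases close. (The small case $n=3$ must be excluded for $\gamma_{\mathsf{m}}$, but in this range $\M_8\cong\D_8$ has already been realized by $\gamma_4$ in \thref{thm:c2ng1234}.)

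The main obstacle is (iii), the identification of the isomorphism class of $(G,\circ)$. In the two abelian cases the circle operation is $g\circ h=2^u gh+g+h$, so the map $\phi(g)=1+2^u g$ is a group homomorphism from $(G,\circ)$ to $(\Zmod{2^n})^\times$; a straightforward induction yields the closed forms $1^{\circ k}=((2^u+1)^k-1)/2^u$ for $\gamma_{\mathsf{c},u}$ and $2^{\circ k}=(5^k-1)/2$ for $\gamma_{\mathsf{p}}$. Lemma \thref{thm:five_lemma_mod_3} then shows that $1$ has $\circ$-order $2^n$, so $(G,\circ)\cong\C_{2^n}$ for $\gamma_{\mathsf{c},u}$; and Lemma \thref{thm:five_lemma_mod_1} shows that $2$ has $\circ$-order $2^{n-1}$ for $\gamma_{\mathsf{p}}$, which combined with the facts that the image of $\phi$ lies in $(\Zmod{2^n})^\times\cong\C_2\times\C_{2^{n-2}}$ and $|\!\ker\phi|=2$ forces $(G,\circ)\cong\C_2\times\C_{2^{n-1}}$. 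For $\gamma_{\mathsf{m}}$ I would take $x=2$ and $y=2^{n-2}+1$ as candidate generators: the restriction of $\circ$ to the even elements coincides with the $\mathsf{p}$-product, so $x$ has $\circ$-order $2^{n-1}$; a direct computation gives $y\circ y=0$; and a two-step expansion of $y\omo\circ x\circ y$ produces $x^{\circ(2^{n-2}+1)}$, matching the modular relation of $\M_{2^n}$. The delicate point is tracking the $2^{n-2}$ correction through these iterated parity-sensitive products and distinguishing it from the exponents $-1$ and $2^{n-2}-1$ that would instead produce $\Q_{2^n}$, $\D_{2^n}$, or $\SD_{2^n}$; this is precisely where Lemma \thref{thm:five_lemma_mod_2} is used.
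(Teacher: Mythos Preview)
Your strategy coincides with the paper's: verify the functional equation by a direct arithmetic expansion, read off ``defined modulo'' from \thref{thm:gamma_cyclic}, disprove normality via \thref{thm:srg3}(iv), and pin down the isomorphism type of $(G,\circ)$ by exhibiting generators and invoking Lemmas \ref{thm:five_lemma_mod_1}--\ref{thm:five_lemma_mod_3}. For $\gamma_{\mathsf{c},u}$ and $\gamma_{\mathsf{p}}$ your argument is fine; the homomorphism $\phi(g)=1+2g$ together with the element $2$ of $\circ$-order $2^{n-1}$ is a pleasant variant of the paper's choice of generators $2,-1$ for $N_{\mathsf{p}}$.

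There is, however, a concrete error in the $\gamma_{\mathsf{m}}$ case. Your claim that $y=2^{n-2}+1$ satisfies $y\circ y=0$ is false for $n\ge 5$. Since $y$ is odd, $\gamma_{\mathsf{m}}(y)=\sigma_{2y+2^{n-2}+1}=\sigma_{3\cdot 2^{n-2}+3}$, and hence
\[
y\circ y=(3\cdot 2^{n-2}+3)(2^{n-2}+1)+(2^{n-2}+1)\equiv 7\cdot 2^{n-2}+4\equiv 2^{n-1}+2^{n-2}+4\pmod{2^n},
\]
which is nonzero once $n\ge 5$ (for $n=5$ one gets $28\not\equiv 0\pmod{32}$). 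So this $y$ cannot serve as the order-$2$ generator in the $\M_{2^n}$ presentation, and the subsequent conjugation check with this $y$ will not close either. The paper instead takes the second generator to be $2^{n-2}+2^{n-3}-1$; with that choice the relations $r^{\circ 2^{n-1}}=s^{\circ 2}=0$ and $s\omo\circ r\circ s=r^{\circ(2^{n-2}+1)}$ do go through, and \thref{thm:five_lemma_mod_2} is used exactly to identify $r^{\circ(2^{n-2}+1)}$. Once you replace your $y$ by this element (or any odd $y$ with $y\circ y=0$), the rest of your plan is correct.
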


\begin{proof}
    The proof is similar to that of \thref{thm:c2ng1234}. We prove that they are gamma functions by a direct verification and we exhibit the generators of the associated regular subgroup. Moreover, we use \thref{thm:srg3} again to show that such subgroups are not normal. We report as above some information about the generators, whose orders are determined by using \thref{thm:five_lemma_mod_1}, \thref{thm:five_lemma_mod_2}, and \thref{thm:five_lemma_mod_3}.

    \def\arraystretch{1.5}%
    \begin{table}[h]
        \centering
        \begin{tabular}{|c|c|c|c|}
            \hline
             \shortstack{Regular\\subgroup}& \shortstack{Isomorphism\\class} & \shortstack{Generators\\of $(G,\circ)$} & \shortstack{Relations\\of $(G,\circ)$} \\
            \hline
            
            \multirow{ 3}{*}{$N_\mathsf{p}$} & \multirow{ 3}{*}{$\C_{2}\times\C_{2^{n-1}}$} & \multirow{ 3}{*}{$2, 2^n-1$} & $2^{\c k}=\frac{5^k-1}{2}$ \\
            & & & $(2^n-1)\c (2^n-1)=0$ \\
            & & & $(2^n-1)\omo\c 2\c (2^n-1)=2$ \\
            
            \hline
            
            \multirow{ 3}{*}{$N_\mathsf{m}$} & \multirow{ 3}{*}{$\M_{2^n}$} & \multirow{ 3}{*}{$2, 2^{n-2}+2^{n-3}-1$} & $2^{\c k}=\frac{5^k-1}{2}$ \\
            & & & $(2^{n-2}+2^{n-3}-1)^{\c 2} = 0$ \\
            & & & $(2^{n-2}+2^{n-3}-1)\omo\c 2\c (2^{n-2}+2^{n-3}-1)= 2^{n-1}+2$ \\
            
            \hline
            
            $N_{\mathsf{c},u}$ & $\C_{2^n}$ & $1$ & 
            $1^{\c k}=2^{-u}\left[(2^u+1)^k-1\right]$\\            
            \hline
            
        \end{tabular}
    \end{table}    
\end{proof}

\begin{lemma} \thlabel{thm:k_lemma}
    Let $G=\C_{2^n}$ be a cyclic group of order $2^n$ with $n\ge 3$, let $N\le\hol(G)$ be a regular subgroup, and let $\gamma\colon G\to\aut(G)$ be the associated gamma function. Then the cardinality $|\nor_{\sym(G)}(N)\cap\aut(G)|=|K|$ where $$K=\left\{k\in\Z\, :\, 0\le k < 2^{n-1},\ \g(x)=\g((2k+1)x)\quad \forall x\in G\right\}.$$
\end{lemma}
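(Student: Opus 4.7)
The plan is to translate the normalization condition $\alpha\in\nor_{\sym(G)}(N)$ into an equation on $\gamma$ via \thref{thm:gamma_alpha_lemma}, and then read off the result by parametrizing $\aut(G)$ in the standard way.

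First I would set notation: since $G=\C_{2^n}$ we have $\aut(G)=\{\sigma_a:a\in\Z,\ 0<a<2^n,\ a\text{ odd}\}$, where $\sigma_a$ is multiplication by $a$ in additive notation. The odd residues mod $2^n$ are precisely $\{2k+1:0\le k<2^{n-1}\}$, so the map $k\mapsto\sigma_{2k+1}$ is a bijection between the index set of $K$ and $\aut(G)$. In particular it suffices to show that $\sigma_{2k+1}\in\nor_{\sym(G)}(N)$ if and only if $k\in K$.

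Next, by the bijection in \thref{thm:srg1} between regular subgroups of $\hol(G)$ and gamma functions, an automorphism $\alpha\in\aut(G)$ normalizes $N$ precisely when $\gamma^\alpha=\gamma$. By \thref{thm:gamma_alpha_lemma}, $\gamma^\alpha(g)=\alpha\mo\gamma(g^{\alpha\mo})\alpha$, and because $\aut(\C_{2^n})$ is abelian the conjugation is trivial, giving
\[
\gamma^\alpha(g)=\gamma(g^{\alpha\mo})\qquad\forall g\in G.
\]
Therefore $\alpha\in\nor_{\sym(G)}(N)\cap\aut(G)$ if and only if $\gamma(g^{\alpha\mo})=\gamma(g)$ for every $g\in G$.

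Finally, I substitute $\alpha=\sigma_{2k+1}$. Then $\alpha\mo=\sigma_{(2k+1)\mo}$ and $g^{\alpha\mo}=(2k+1)\mo g$ in $\Z/2^n\Z$. Replacing $g$ with $(2k+1)h$, the condition $\gamma((2k+1)\mo g)=\gamma(g)$ for all $g\in G$ becomes $\gamma(h)=\gamma((2k+1)h)$ for all $h\in G$, which is exactly the defining condition of $K$. Thus $\sigma_{2k+1}\mapsto k$ is a bijection between $\nor_{\sym(G)}(N)\cap\aut(G)$ and $K$, yielding the claimed equality of cardinalities. There is no real obstacle here; the only subtle point is remembering that the abelianness of $\aut(\C_{2^n})$ collapses the conjugation in $\gamma^\alpha$ to a plain precomposition with $\alpha\mo$.
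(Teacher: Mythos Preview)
Your argument is correct and matches the paper's own proof essentially line for line: both identify $\nor_{\sym(G)}(N)\cap\aut(G)$ with the stabilizer of $\gamma$ under the conjugation action, use the abelianness of $\aut(\C_{2^n})$ to reduce $\gamma^\alpha(x)=\gamma(x)$ to $\gamma(x^{\alpha\mo})=\gamma(x)$ (equivalently $\gamma(x)=\gamma(x^\alpha)$), and then parametrize $\alpha=\sigma_{2k+1}$. The only difference is that you spell out the bijection $N\leftrightarrow\gamma$ and the substitution $g\mapsto(2k+1)h$ a bit more explicitly than the paper does.
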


\begin{proof}
    To compute the number $|\nor_{\sym(G)}(N)\cap\aut(G)|$, it is enough to determine the cardinality of the stabilizer of $\gamma$ in $\aut(G)$ under conjugation. Thus $\g^\alpha(x)=\g(x)\ \forall x\in G\iff\alpha\mo\g(x^{\alpha\mo})\alpha=\g(x)\ \forall x\in G\iff \g(x^{\alpha\mo})=\g(x)\ \forall x\in G\iff \g(x)=\g(x^{\alpha})\ \forall x\in G$.
    Since each element $\alpha\in\aut(G)$ is of the form $\alpha=\sigma_{2k+1}$ for some $0\le k <2^{n-1}$, we obtain the conclusion.
\end{proof}

We are ready to state and prove the main result concerning the existence of regular subgroups of $\hol(G)$, related to the sizes of the conjugacy classes of them.

\begin{proposition} \thlabel{thm:c2nclasses}
    Let $G=\C_{2^n}$ be a cyclic group of order $2^n$ with $n\ge 3$. Under the action by conjugation of $\aut(G)$ on the family of regular subgroups of $\hol(G)$, we have
    \begin{enumerate}
        \item Four conjugacy classes of size $1$: $\{N_1\},\{N_2\},\{N_3\},$ and $\{N_4\}$.
        \item One conjugacy class of size $2$: $\{N_5,N_6\}$.
        \item Two conjugacy classes of size $2^{n-2}$: $\{N_\mathsf{p}^\alpha\, :\,\alpha\in\aut(G)\}$, and $\{N_\mathsf{m}^\alpha\, :\,\alpha\in\aut(G)\}$ (if $n\ge 4$).
        \item Conjugacy classes of sizes $2,4,8,\dots,2^{n-3}$: $\{N_{\mathsf{c},u}^\alpha\, :\,\alpha\in\aut(G)\}$ for $u=2,\dots,n-2$, each of size $2^{n-u-1}$, for $u\le n-2$ (if $n\ge 4$).
    \end{enumerate}
\end{proposition}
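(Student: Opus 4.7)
The plan is to combine \thref{thm:k_lemma} and \thref{thm:gamma_counts} to turn the size of each conjugacy class into an explicit modular counting problem. Since $|\aut(G)|=2^{n-1}$ and $\aut(G)\cong(\Zmod{2^n})^*$ is abelian, \thref{thm:gamma_conj_class_regular} applies, so the orbit of a regular subgroup $N$ with gamma function $\gamma$ has cardinality $2^{n-1}/|K_\gamma|$, where $K_\gamma=\{k\in\Z\, :\, 0\le k<2^{n-1},\ \gamma(x)=\gamma((2k+1)x)\ \forall x\in G\}$. I would go through the gamma functions produced in \thref{thm:c2ng1234} and \thref{thm:c2ngpmc}, computing $|K_\gamma|$ in each case by reading off the modulus of $\gamma$ from \thref{thm:gamma_cyclic} and translating the defining condition of $K_\gamma$ into a congruence on $k$.

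For $\gamma_1,\ldots,\gamma_4$, defined modulo $1$ or $2$, no condition is imposed on $k$ since $(2k+1)x\equiv x\pmod 2$ for every $k$ and $x$; hence $|K_\gamma|=2^{n-1}$ and each orbit has size $1$. For $\gamma_5,\gamma_6$, defined modulo $4$, the condition reduces to $2kx\equiv 0\pmod 4$ for every $x$: taking $x=1$ forces $k$ even and this is also sufficient, so $|K_\gamma|=2^{n-2}$ and each orbit has size $2$. The subtle point is that these two orbits should coincide. I would confirm $\gamma_5^{\sigma_3}=\gamma_6$ directly via \thref{thm:gamma_alpha_lemma}, using that abelianness of $\aut(G)$ collapses conjugation to precomposition by $\sigma_3\mo$, and that $3\mo\equiv 3\pmod 4$ swaps $1\leftrightarrow 3$ and fixes $0,2$ modulo $4$, matching the tabular definition of $\gamma_6$ against that of $\gamma_5$.

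For $\gamma_\mathsf{p}(x)=\sigma_{2x+1}$ and $\gamma_{\mathsf{c},u}(x)=\sigma_{2^ux+1}$ the analysis is linear: the required equality becomes $4kx\equiv 0\pmod{2^n}$, respectively $2^{u+1}kx\equiv 0\pmod{2^n}$, for every $x\in G$, and the choice $x=1$ already determines $k$ modulo $2^{n-2}$ (resp.\ $2^{n-u-1}$), yielding $|K_{\gamma_\mathsf{p}}|=2$ and $|K_{\gamma_{\mathsf{c},u}}|=2^u$, hence orbits of sizes $2^{n-2}$ and $2^{n-u-1}$. The most delicate case will be $\gamma_\mathsf{m}$, because its piecewise definition seems to force a case split on the parity of $x$; however, since $(2k+1)x$ preserves the parity of $x$, both branches reduce to the same congruence $4kx\equiv 0\pmod{2^n}$, so the counting collapses to a single condition and again gives $|K_{\gamma_\mathsf{m}}|=2$, hence an orbit of size $2^{n-2}$.

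Finally, I would argue that the orbits listed in the statement are pairwise distinct. Conjugation by $\aut(G)$ fixes the image $\gamma(G)$ pointwise (by abelianness, via \thref{thm:gamma_alpha_lemma}) and the isomorphism class of $N$. These invariants already separate all the candidates: the pairwise non-isomorphic groups $\C_{2^n},\Q_{2^n},\D_{2^n},\SD_{2^n},\C_2\times\C_{2^{n-1}},\M_{2^n}$ distinguish orbits across different types, while within the cyclic family the images $\langle\sigma_{2^u+1}\rangle$ have pairwise distinct orders $2^{n-u}$, separating $N_1$, $N_2$ and the various $N_{\mathsf{c},u}$ from one another.
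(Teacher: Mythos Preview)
Your proposal is correct and follows the same strategy as the paper: reduce the orbit sizes to the cardinalities of the sets $K_\gamma$ via \thref{thm:k_lemma} and \thref{thm:gamma_counts}, and compute these by translating $\gamma(x)=\gamma((2k+1)x)$ into a congruence on $k$. The paper's own proof is a two-line sketch that only works out the $\gamma_5,\gamma_6$ case explicitly; your treatment of $\gamma_\mathsf{p}$, $\gamma_\mathsf{m}$, and $\gamma_{\mathsf{c},u}$, the direct check that $\gamma_5^{\sigma_3}=\gamma_6$, and the separation of orbits by the invariants $(\,N\text{ up to isomorphism},\ \gamma(G)\,)$ are details the paper leaves implicit.
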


\begin{proof}
    The first claim follows from the fact that $N_i\norm\hol(G)$ for $i=1,\dots 4$. The other claims are a direct consequence of \thref{thm:k_lemma} and \thref{thm:gamma_counts}, for instance, for $N_5$ and $N_6$, we have $\g(x)=\g((2k+1)x)\iff2 kx\equiv 0\pmod4$, thus $K=\left\{2t\, :\,0\le t\le 2^{n-2}-1\right\}$ has cardinality $|K|=2^{n-2}$, thus the corresponding conjugacy class of gamma functions has cardinality 2.
\end{proof}

Finally, by adding them up, we obtain the following.

\begin{corollary} \thlabel{thm:c2nclassescor}
    Let $G=\C_{2^n}$ be a cyclic group of order $2^n$. If $n=3$ then there are at least six regular subgroups in $\hol(G)$. If $n\ge 4$, then there are at least $3\cdot 2^{n-2}+4$ regular subgroups of $\hol(G)$.
\end{corollary}

\subsubsection{Uniqueness problem}

The goal of this subsection is to prove that there are no regular subgroups in $\hol(G)$, other than those already found above. Roughly speaking, we aim to obtain a result like \thref{thm:c2nclassescor} where the words \emph{“at least"} are replaced by \emph{“exactly"}. We will exploit a result of N. P. Byott found in \cite{byott1996hopfgalois}.

\begin{proposition} \thlabel{thm:c2n!1234}
    There are exactly four regular normal subgroups of $\hol(G)$, namely they are $N_1, N_2, N_3, N_4$.
\end{proposition}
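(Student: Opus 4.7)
The plan is to invoke \thref{thm:srg3}, which identifies regular normal subgroups $N\norm\hol(G)$ with gamma functions $\gamma\colon G\to\aut(G)$ satisfying $\gamma(x^\alpha)=\gamma(x)^\alpha$ for all $x\in G$ and all $\alpha\in\aut(G)$. Since $G$ is cyclic, $\aut(G)$ is abelian, so $\gamma(x)^\alpha=\gamma(x)$ and the normality condition collapses to $\gamma(x^\alpha)=\gamma(x)$; equivalently, $\gamma$ must be constant on every $\aut(G)$-orbit of $G$.

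Combining this with \thref{thm:gamma_cyclic}, any such $\gamma$ is defined modulo $q\defeq|\gamma(G)|$, so $\gamma(x)=\gamma(y)$ exactly when $x\equiv y\pmod{q}$. The $\aut(G)$-orbit of $1$ is the set of generators of $G$, i.e.\ the odd residues modulo $2^n$, and in particular it contains both $1$ and $3$. Constancy of $\gamma$ on this orbit forces $1\equiv 3\pmod{q}$, hence $q\in\{1,2\}$. This is really the only nontrivial step of the argument: once the size of the image is capped at $2$, the classification becomes essentially a count of involutions in $\aut(G)$.

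It then remains to describe the gamma functions with image of size at most $2$. If $q=1$, the only option is the trivial $\gamma_1$, which gives $N_1$. If $q=2$, then $\gamma(0)=\id$ (since $\gamma$ is a homomorphism out of $(G,\circ)$) and $\gamma(1)=\alpha$ for some $\alpha\in\aut(G)$; evaluating the gamma functional equation at $g=h=1$ yields $\alpha^2=\id$ (because $\alpha(1)+1$ is even, hence killed by $\gamma$), so $\alpha$ must be an involution. For $n\ge 3$ the group $\aut(G)\cong\Z/2\Z\times\Z/2^{n-2}\Z$ has exactly three involutions, namely $\sigma_{2^{n-1}+1}$, $\sigma_{2^{n-1}-1}$ and $\sigma_{2^{n}-1}$, which via \thref{thm:gamma_involution} produce respectively $\gamma_2$, $\gamma_3$ and $\gamma_4$; that each of these is indeed a gamma function follows from \thref{thm:gamma_morphism_ker} using the elementary observation $[G,\gamma(G)]\subseteq 2G$ when $\gamma(G)$ is generated by an involution. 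Since \thref{thm:c2ng1234} already confirms that $N_1,\dots,N_4$ are regular normal subgroups, the total count is exactly four.
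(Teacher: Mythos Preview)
Your proof is correct and follows essentially the same strategy as the paper: use \thref{thm:srg3} together with the abelianness of $\aut(G)$ to reduce normality to the condition $\gamma(x^\alpha)=\gamma(x)$, deduce that $\gamma(1)$ has order at most $2$, and then count the identity plus the three involutions of $\aut(\C_{2^n})$. The only cosmetic difference is that the paper invokes the additive (anti\nobreakdash-)homomorphism clause of \thref{thm:srg3} directly to obtain $\gamma(2x)=\id$, whereas you route through \thref{thm:gamma_cyclic} and the orbit $\{1,3,\dots\}$ to bound $|\gamma(G)|\le 2$; both arguments land in the same place.
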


\begin{proof}
    Let $N\le\hol(G)$ be a regular normal subgroup, and let $\gamma\colon G\to\aut(G)$ the associated gamma function. Because of \thref{thm:srg3} and since $\aut(G)$ is abelian, the fact that $N$ is regular normal is equivalent to
    \begin{equation} \label{eqn:gamma_normal}
    \begin{cases}
        \gamma(x+y)=\gamma(x)\gamma(y) \\
        \gamma(x^\alpha)=\gamma(x)
    \end{cases}\quad\forall x,y\in G,\forall\alpha\in\aut(G)
    \end{equation}
    In other words, $\gamma\colon G\to\aut(G)$ is a group homomorphism (with respect to the natural additive operation on $G$) and the value of $\gamma(x)$ does not change if we apply any automorphism on $x$. From this data, it is easy to see that $\g(2x)=\id$ for every $x\in G$, in particular, $\g(1)^2=\id$, that is $\g(1)$ is either the identity map or an involution of $\aut(G)$.
\end{proof}

\begin{corollary} \thlabel{thm:gamma_image_c1c2}
    Let $\gamma\colon G\to\aut(G)$ be a gamma function on $G$ such that $\gamma(G)$ is isomorphic to either $\C_1$ or $\C_2$. Then $\gamma$ is associated with a regular normal subgroup $N\norm\hol(G)$ such that $N\cong\C_{2^n},\Q_{2^n},\D_{2^n}$.
\end{corollary}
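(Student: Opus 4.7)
The plan is to split on the order of the image $\gamma(G)$, which by hypothesis is $1$ or $2$, and in each case identify $\gamma$ with one of the gamma functions listed in \thref{thm:c2ng1234}.

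First, if $|\gamma(G)|=1$, then $\gamma(x)=\id$ for every $x\in G$, so $\gamma=\gamma_1$, and the associated regular subgroup is $N_1\cong\C_{2^n}$, which is normal in $\hol(G)$ by \thref{thm:c2ng1234}. So assume $|\gamma(G)|=2$. By \thref{thm:gamma_cyclic}, $\gamma$ is defined modulo $2$, and by \thref{thm:gamma_normal} the associated regular subgroup $N$ is normal in $\hol(G)$. Since $\gamma\colon(G,\circ)\to\aut(G)$ is a group homomorphism (\thref{thm:srg1}(e)), its image is cyclic of order $2$, so $\gamma(G)=\{\id,\sigma\}$ for some involution $\sigma\in\aut(G)$. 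Because $\gamma$ is defined modulo $2$ and $\gamma(0)=\id$, we get $\gamma(x)=\id$ for even $x$ and $\gamma(x)=\sigma$ for odd $x$; since $\sigma^2=\id$, this is exactly $\gamma(x)=\sigma^x$.

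Next I would enumerate the involutions of $\aut(G)$. For $n\ge 3$ we have $\aut(\C_{2^n})\cong(\Z/2^n\Z)^{\!*}\cong\C_2\times\C_{2^{n-2}}$, and the equation $a^2\equiv 1\pmod{2^n}$ has the three nontrivial solutions $a\in\{2^{n-1}+1,\,2^{n-1}-1,\,2^{n}-1\}$. So $\sigma$ is one of $\sigma_{2^{n-1}+1}$, $\sigma_{2^{n-1}-1}$, $\sigma_{2^n-1}$, and correspondingly $\gamma$ coincides with $\gamma_2$, $\gamma_3$, or $\gamma_4$ of \thref{thm:c2ng1234}. The associated regular subgroups are $N_2\cong\C_{2^n}$, $N_3\cong\Q_{2^n}$, and $N_4\cong\D_{2^n}$, each of which is normal in $\hol(G)$.

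Combining the two cases, $N\in\{N_1,N_2,N_3,N_4\}$, hence $N\norm\hol(G)$ and $N$ is isomorphic to $\C_{2^n}$, $\Q_{2^n}$, or $\D_{2^n}$, as claimed. The only nontrivial point is the classification of the order-$2$ subgroups of $\aut(\C_{2^n})$; once that is in hand, everything else is a direct application of the cited preliminaries (\thref{thm:gamma_cyclic}, \thref{thm:gamma_involution}, \thref{thm:gamma_normal}, and the table in \thref{thm:c2ng1234}), so I expect no real obstacle.
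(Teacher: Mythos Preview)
Your argument is correct and follows essentially the same route as the paper. The paper's one-line proof invokes \thref{thm:gamma_cyclic} together with \thref{thm:c2n!1234} (the classification of the four regular \emph{normal} subgroups); you instead invoke \thref{thm:gamma_cyclic} and \thref{thm:gamma_normal} to get normality, and then explicitly enumerate the three involutions of $\aut(\C_{2^n})$ to match $\gamma$ against the table in \thref{thm:c2ng1234}. That enumeration is exactly the content of the proof of \thref{thm:c2n!1234}, so you have simply unpacked the citation rather than taken a different path.
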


\begin{proof}
    If $\gamma\colon G\to\aut(G)$ is a gamma function such that $|\gamma(G)|=1$, then it is the trivial map $\g_1$ associated with a cyclic regular subgroup. Otherwise, if $|\gamma(G)|=2$, thanks to \thref{thm:gamma_cyclic}, $\gamma$ is defined modulo 2, and it is associated with a regular normal subgroup of $\hol(G)$, because of \thref{thm:gamma_normal}. Now, from \thref{thm:c2n!1234} we conclude that $\g$ must be associated with either $\g_2$, $\g_3$ or $\g_4$.
\end{proof}

Let us recall Theorem 4.4 of \cite{gorenstein2007finite}.

\begin{theorem}[Zassenhaus] \thlabel{thm:hall_zassenhaus}
    Let $n\ge 1$ and let $p$ be a prime. Let $G$ be a group of order $p^n$ with a cyclic maximal subgroup $M\cong\C_{p^{n-1}}$. 
    
    \noindent If $p=2$, then $G$ belongs to exactly one of the following six isomorphism classes.
    \begin{enumerate}
        \item $G\cong\C_{2^n}$ where the isomorphism sends $M$ to the subgroup of multiples of $2$.
        \item $G\cong\C_2\times\C_{2^{n-1}}$ with $M$ being the second direct factor (for $n\ge 2$).
        \item $G\cong\Q_{2^n}$ (for $n\ge 3$).
        \item $G\cong\D_{2^n}$ that is the semidirect product $\C_2\ltimes M$ where $\C_2$ acts on $M$ via multiplication by $-1$ (for $n\ge 3$).
        \item $G\cong\SD_{2^n}$ that is the semidirect product $\C_2\ltimes M$ where $\C_2$ acts on $M$ via multiplication by $2^{n-2}-1$ (for $n\ge 4$).
        \item $G\cong\M_{2^n}$ that is the semidirect product of $\C_2\ltimes M$ where $\C_2$ acts on $M$ via multiplication by $2^{n-2}+1$ (for $n\ge 4$).
    \end{enumerate}
    If $p$ is an odd prime, then $G$ belongs to exactly one of the following three isomorphism classes.
    \begin{enumerate}
        \item $G\cong\C_{p^n}$ where the isomorphism sends $M$ to the subgroup of multiples of $p$.
        \item $G\cong\C_p\times\C_{p^{n-1}}$ with $M$ being the second direct factor (for $n\ge 2$).
        \item $G\cong\C_p\ltimes M$ where $\C_p$ acts on $M$ via multiplication by $p^{n-2}+1$ (for $n\ge 3$).
    \end{enumerate}
\end{theorem}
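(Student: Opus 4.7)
The plan is to exploit that $M$ has index $p$ in the $p$-group $G$, hence is normal in $G$, and then parametrize $G$ by how a complement acts on $M$ and by what the $p$-th power of the complement is inside $M$.

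First I would set up the parameters. Pick $y\in G\setminus M$ and write $M=\langle x\rangle$. Since $|G/M|=p$, we have $y^p\in M$, so $y^p=x^s$ for some integer $s$ modulo $p^{n-1}$; and since $M\norm G$, conjugation gives $y\mo x y=x^r$ for some $r$ coprime to $p$. The identity $y\mo y^p y=y^p$ forces $(r-1)s\equiv 0\pmod{p^{n-1}}$, and the fact that $y^p\in M$ commutes with $x$ forces $r^p\equiv 1\pmod{p^{n-1}}$, so $r$ corresponds to an element of order dividing $p$ in $\aut(M)\cong(\Zmod{p^{n-1}})^*$. For $p$ odd with $n\ge 3$, the Sylow $p$-subgroup of $(\Zmod{p^{n-1}})^*$ is cyclic of order $p^{n-2}$ and its unique subgroup of order $p$ is $\langle 1+p^{n-2}\rangle$; after replacing $y$ by a suitable power, the non-identity values collapse to $r=1+p^{n-2}$, leaving the dichotomy $r=1$ (abelian) or $r=1+p^{n-2}$. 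For $p=2$ with $n\ge 4$, $(\Zmod{2^{n-1}})^*\cong\C_2\times\C_{2^{n-3}}$ has exactly four elements of order dividing $2$, namely $1,\,-1,\,2^{n-2}-1,\,2^{n-2}+1$, and these four subcases persist because $r$ of order $2$ is fixed by any substitution $y\mapsto y^j$ with $j$ odd.

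Next I would normalize $s$. A direct induction using $x^d y=y x^{dr}$ gives
\[
(yx^d)^p = y^p\cdot x^{d\,S_r},\qquad S_r:=1+r+r^2+\cdots+r^{p-1},
\]
so varying $d$ shifts $s$ by multiples of $S_r\bmod p^{n-1}$, while the further freedom $x\mapsto x^u$ with $\gcd(u,p)=1$ rescales $s$ by $u\mo$. For $p$ odd and $r=1+p^{n-2}$ one computes $S_r\equiv p\pmod{p^{n-1}}$ while $(r-1)s\equiv 0$ forces $p\mid s$, so $s$ reduces to $0$, giving the third class in the odd-$p$ part of the statement. For $p=2$ and $r\in\{2^{n-2}-1,\,2^{n-2}+1\}$ an analogous calculation reduces $s$ to $0$, yielding $\SD_{2^n}$ and $\M_{2^n}$ respectively; while for $r=-1$ one has $S_{-1}=0$, so $s$ is \emph{not} movable by such a substitution, and the two admissible residues $s\in\{0,\,2^{n-2}\}$ (forced by $(r-1)s\equiv 0\pmod{2^{n-1}}$) give $\D_{2^n}$ and $\Q_{2^n}$ respectively.

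The remaining abelian case $r=1$ splits into $\C_{p^n}$ and $\C_p\times\C_{p^{n-1}}$ by the fundamental theorem of finite abelian groups, uniformly in $p$. The main obstacle is precisely the $r=-1$, $p=2$ case: since $S_{-1}=0$, the value $y^2\in\{1,\,x^{2^{n-2}}\}$ is an honest invariant of $G$, and one must verify that the two residues yield non-isomorphic groups. This is cleanest by counting involutions: $\D_{2^n}$ has $2^{n-1}+1$ of them, whereas $\Q_{2^n}$ has the unique involution $x^{2^{n-2}}$. Every other verification reduces to routine computations of $S_r$ and of the divisibility $(r-1)s\equiv 0\pmod{p^{n-1}}$.
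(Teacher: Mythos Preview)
The paper does not prove this statement at all: it is quoted verbatim as Theorem~4.4 of Gorenstein's \emph{Finite Groups} and used as a black box. So there is no ``paper's own proof'' to compare against; your proposal is a self-contained argument for a result the paper merely cites.

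That said, your argument is the standard textbook one (and is essentially what Gorenstein does): parametrize $G$ by the pair $(r,s)$ with $y\mo xy=x^r$ and $y^p=x^s$, classify the admissible $r$ via the structure of $(\Zmod{p^{n-1}})^*$, and then normalize $s$ using the substitution $y\mapsto yx^d$, which shifts $s$ by $dS_r$ with $S_r=1+r+\cdots+r^{p-1}$. Your computations of $S_r$ in each subcase are correct, including the key observation that $S_{-1}=0$ for $p=2$, so the residue $s\in\{0,2^{n-2}\}$ is a genuine invariant distinguishing $\D_{2^n}$ from $\Q_{2^n}$. One small omission: you verify non-isomorphism only for the pair $\D_{2^n}/\Q_{2^n}$, but the ``exactly one'' clause also requires separating, e.g., $\SD_{2^n}$ from $\M_{2^n}$ and from the abelian groups; this is routine (compare centers or count involutions) but should be mentioned. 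The low-$n$ cases ($n\le 3$ for $p=2$, $n\le 2$ for $p$ odd) also deserve a one-line remark, since your analysis of $(\Zmod{p^{n-1}})^*$ tacitly assumes $n$ large enough for the relevant elements to exist.
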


\begin{proposition} \thlabel{thm:zass_lemma}
    Every regular subgroup of $\hol(G)$ has a cyclic maximal subgroup.
\end{proposition}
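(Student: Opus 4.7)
The plan is to exhibit a cyclic subgroup of $N$ of order $2^{n-1}$: any such subgroup has index $2$ in $N$ and is therefore automatically maximal in the $2$-group $N$. Under the parametrisation of \thref{thm:srg1}, the natural candidate is $\langle\nu(2)\rangle$ where $\nu(2)=\g(2)\rho(2)\in N$ corresponds to $2\in G$ and $\g$ is the gamma function attached to $N$.

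The heart of the argument is to establish $\g(2)\equiv 1\pmod 4$. I would derive this by specialising the gamma functional equation in two symmetric ways. Choosing $(g,h)=(2,1)$ gives $\g(2\g(1)+1)=\g(2)\g(1)$, while $(g,h)=(1,2)$ gives $\g(\g(2)+2)=\g(1)\g(2)$. Since $\aut(G)$ is abelian the two right-hand sides coincide, and then \thref{thm:gamma_cyclic} (which asserts that $\g$ is defined modulo $|\g(G)|$) produces
\[
\g(2)\equiv 2\g(1)-1\pmod{|\g(G)|}.
\]
When $|\g(G)|\ge 4$, reducing modulo $4$ and using that $\g(1)$ is odd yields $2\g(1)-1\equiv 1\pmod 4$, hence $\g(2)\equiv 1\pmod 4$. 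The remaining cases $|\g(G)|\in\{1,2\}$ are immediate: $\g$ is defined modulo at most $2$ and $\g(0)=\id$, so $\g(2)=\g(0)=\id$.

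With $c\defeq\g(2)\equiv 1\pmod 4$ in hand, I would determine the order of $\nu(2)$ by a short $2$-adic calculation. The iterated action is $\nu(2)^k\colon x\mapsto c^k x+2\sum_{i=0}^{k-1}c^i$, and because $N\cap\aut(G)=\1$ the permutation $\nu(2)^k$ is trivial as soon as its translational part vanishes, i.e.\ as soon as $2\sum_{i=0}^{k-1}c^i\equiv 0\pmod{2^n}$. For $c\equiv 1\pmod 4$ the lifting-the-exponent identity $v_2(c^k-1)=v_2(c-1)+v_2(k)$ (together with the direct formula $\sum=k$ when $c=1$) forces the smallest valid $k$ to be $2^{n-1}$. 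Hence $\langle\nu(2)\rangle\le N$ is the required cyclic maximal subgroup.

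The main obstacle is the first step. A priori nothing constrains $\g(2)$ to lie in the index-$2$ subgroup $\langle 5\rangle\le\aut(G)$, and indeed several of the gamma functions displayed in \thref{thm:c2ng1234} and \thref{thm:c2ngpmc} take values outside $\langle 5\rangle$ elsewhere on $G$. What rescues the argument is that we only need control of $\g$ at the single point $g=2$, and the symmetric specialisation of the gamma functional equation, combined with the cyclic ``defined modulo'' phenomenon of \thref{thm:gamma_cyclic}, pins down precisely that information.
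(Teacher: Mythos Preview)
Your proof is correct. Both you and the paper end up identifying the same subgroup---the even elements of $G$ under $\circ$---but by quite different routes. The paper argues structurally: since $|\aut(G)|=2^{n-1}<2^n$, the homomorphism $\gamma\colon(G,\circ)\to\aut(G)$ has nontrivial kernel, one may take a nonzero even $k\in\ker\gamma$, and then the set of even elements (written as $\{x\circ k:x\text{ even}\}=\{x+k:x\text{ even}\}$) is declared to be the desired cyclic maximal subgroup, with the cyclicity left as ``easy to see''. You instead pin down the generator explicitly: by specialising the gamma functional equation symmetrically you extract the single fact $\gamma(2)\equiv 1\pmod 4$, and then a clean $2$-adic/LTE computation shows $\nu(2)$ has order exactly $2^{n-1}$. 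Your approach is longer but self-contained and actually proves the cyclicity that the paper only asserts; the paper's approach is shorter and highlights the role of $\ker\gamma$, which becomes relevant again later. The two arguments are complementary rather than redundant.
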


\begin{proof}
    Let $N\le\hol(G)$ be a regular subgroup. Denote by $\gamma\colon G\to\aut(G)$ its associated gamma function and by $\circ$ the associated circle operation on $G$ such that $(G,\circ)\cong N$. Since $|G|=2^n$ and $|\!\aut(G)|=2^{n-1}$, $\g$ cannot be injective, thus there is a non-trivial element $k\in\ker\gamma$. It is not restrictive to assume that $k$ is even. Define
    $$\overline{M}=\{x\circ k\, :\,x\in G,\, x\mbox{ even }\}=\{x+k\, :\,x\in G,\, x\mbox{ even }\}$$ and thanks to the isomorphism $\nu\colon(G,\circ)\to N$ it is easy to see that $M=\nu(\overline{M})\le N$ is a cyclic maximal subgroup of $N$.
\end{proof}

\begin{corollary} \thlabel{thm:classification_regular_2}
    Every regular subgroup of $\hol(G)$ belongs to exactly one of the following six isomorphism classes: $\C_{2^n}$, $\Q_{2^n}$, $\D_{2^n}$, $\SD_{2^n}$, $\M_{2^n}$, $\C_2\times\C_{2^{n-1}}$. 
\end{corollary}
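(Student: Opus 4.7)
The plan is straightforward: the statement is a direct corollary of the two immediately preceding results, so the proof reduces to concatenating them. I would first note that any regular subgroup $N\le\hol(G)$ is a subgroup of $\sym(G)$ that acts transitively and freely on $G$, hence $|N|=|G|=2^n$. Thus $N$ is a $2$-group of order $2^n$, which sets the stage for applying Zassenhaus's classification.

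Next I would invoke \thref{thm:zass_lemma} to assert that $N$ contains a cyclic subgroup of order $2^{n-1}$, i.e.\ a cyclic maximal subgroup $M\cong\C_{2^{n-1}}$. This is the only non-trivial input: the proposition supplies the hypothesis required by Zassenhaus's theorem. Once this is in place, I would apply \thref{thm:hall_zassenhaus} in the case $p=2$, which immediately forces $N$ to lie in exactly one of the six isomorphism classes $\C_{2^n}$, $\C_2\times\C_{2^{n-1}}$, $\Q_{2^n}$, $\D_{2^n}$, $\SD_{2^n}$, $\M_{2^n}$.

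There is no real obstacle here; the entire content of the corollary is already carried by the preceding proposition and the Hall--Zassenhaus theorem. The only small care one might take is to remark that, for $n=3$, the classes $\SD_{2^n}$ and $\M_{2^n}$ do not appear in Zassenhaus's list (since they require $n\ge 4$), so the effective enumeration of possibilities shrinks accordingly; but as the statement is phrased as a list of admissible isomorphism types, this causes no problem, and the proof reduces to the two-line chain described above.
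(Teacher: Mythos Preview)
Your proof is correct and follows exactly the paper's approach: the paper's own argument is the one-line ``It follows from \thref{thm:hall_zassenhaus} and \thref{thm:zass_lemma},'' which is precisely the concatenation you describe. Your additional remarks (that $|N|=2^n$ by regularity, and the shrinking of the list for $n=3$) are harmless elaborations of the same reasoning.
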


\begin{proof}
    It follows from \thref{thm:hall_zassenhaus} and \thref{thm:zass_lemma}.
\end{proof}

\thref{thm:classification_regular_2} restricts the eligible isomorphism types of regular subgroups of $\hol(G)$. We continue our approach to the \emph{uniqueness problem} in two different ways. For cyclic groups, we use a powerful result of N. P. Byott (see \cite{byott1996hopfgalois}), which allows us to determine without counting how many subgroups of that kind there are. Subsequently, we proceed with a direct proof in the language of skew braces for the remaining cases. For any $2-$group $G$ we denote by $\Omega_1(G)$ the subgroup of $G$ generated by the involutions. It is well known that $\Omega_1(\aut(\C_{2^n}))=\{\sigma_1,\sigma_{2^{n-1}-1},\sigma_{2^{n-1}+1},\sigma_{2^{n}-1}\}\cong\V$, which will be exploited later.

\begin{proposition}[Byott, \cite{byott2006cyclic}]
    There are exactly $2^{n-2}$ regular subgroups of $\hol(G)$ isomorphic to $\C_{2^n}$.
\end{proposition}

\begin{proposition} \thlabel{thm:c2n!56}
    There are exactly $2$ regular subgroups of $\hol(G)$ isomorphic to $\SD_{2^n}$.
\end{proposition}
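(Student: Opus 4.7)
Existence is already handled: Propositions \ref{thm:c2ng1234} and \ref{thm:c2nclasses} exhibit $N_5$ and $N_6$ as two distinct regular subgroups of $\hol(G)$ isomorphic to $\SD_{2^n}$, forming a single $\aut(G)$-orbit of size $2$. For uniqueness, let $N\le\hol(G)$ be any regular subgroup with $N\cong\SD_{2^n}$, and let $\gamma\colon G\to\aut(G)$ be its associated gamma function, with circle operation $\circ$. Since $\aut(G)$ is abelian and $\gamma\colon(G,\circ)\to\aut(G)$ is a group homomorphism by \thref{thm:srg1}(e), $\gamma$ factors through the abelianization of $(G,\circ)\cong\SD_{2^n}$. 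A short computation gives $[\SD_{2^n},\SD_{2^n}]=\langle x^2\rangle$ of order $2^{n-2}$, so the abelianization is $\C_2\times\C_2$, and hence $|\gamma(G)|\in\{1,2,4\}$. The first two options would, by \thref{thm:gamma_image_c1c2}, force $N$ to be normal in $\hol(G)$ and isomorphic to one of $\C_{2^n},\Q_{2^n},\D_{2^n}$, contradicting $N\cong\SD_{2^n}$. Therefore $|\gamma(G)|=4$, and since $\aut(G)\cong\C_2\times\C_{2^{n-2}}$ contains a unique Klein four subgroup $V=\{\id,\sigma_{-1},\sigma_{2^{n-1}+1},\sigma_{2^{n-1}-1}\}$, we must have $\gamma(G)=V$. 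By \thref{thm:gamma_cyclic}, $\gamma$ is defined modulo $4$, so it is entirely determined by the triple $(\gamma(1),\gamma(2),\gamma(3))$, which is a permutation of the three non-identity elements of $V$. This leaves at most $6$ candidate functions.

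I next cut the casework down via the $\aut(G)$-action on these candidates. Combining \thref{thm:gamma_alpha_lemma}(ii) with the commutativity of $\aut(G)$, conjugation by $\sigma_{2k+1}$ sends $\gamma$ to the function $x\mapsto\gamma((2k+1)^{-1}x)$. Since $(2k+1)^{-1}\equiv 2k+1\pmod 4$, the action is trivial when $2k+1\equiv 1\pmod 4$ and, when $2k+1\equiv 3\pmod 4$, it simply swaps the values $\gamma(1)$ and $\gamma(3)$ while fixing $\gamma(2)$. Thus the $6$ candidates split into exactly $3$ orbits of size $2$, each distinguished by the common value of $\gamma(2)$; by \thref{thm:gamma_alpha_lemma}(iv), the isomorphism type of $(G,\circ)$ is constant along each orbit. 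One of these orbits is $\{\gamma_5,\gamma_6\}$, characterised by $\gamma(2)=\sigma_{2^{n-1}+1}$, already known to yield $\SD_{2^n}$.

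The final step is to rule out the remaining two orbits. Picking one representative from each and testing the gamma functional equation $\gamma(x^{\gamma(y)}+y)=\gamma(x)\gamma(y)$ at the small inputs $x=y=1$ and $x=1,y=2$, a short reduction modulo $2^n$ (using $(2^{n-1}\pm 1)^2\equiv 1\pmod{2^n}$ and the multiplication table of $V$) shows in both cases that the two sides disagree as elements of $V$. Hence neither candidate is a gamma function, and the corresponding orbits contribute no regular subgroups of $\hol(G)$ at all. This leaves $\{N_5,N_6\}$ as the only regular subgroups of $\hol(G)$ isomorphic to $\SD_{2^n}$, proving the proposition. The crux of the argument is the abelianization reduction to $|\gamma(G)|=4$ with image exactly $V$; once that is in hand, the six remaining candidates are dispatched by a straightforward arithmetic check.
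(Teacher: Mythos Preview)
Your proof is correct, and the overall strategy coincides with the paper's: both arguments first force $\gamma(G)=\Omega_1(\aut(G))=V$ (you via the abelianisation of $\SD_{2^n}$, the paper via the observation $\gamma(x)^2=\id$ together with \thref{thm:gamma_image_c1c2}), and both then reduce to six candidate functions and eliminate four of them.

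The elimination step is where you diverge. The paper parametrises by the presentation generators $r,s$ of $(G,\circ)\cong\SD_{2^n}$, takes $r$ even (hence $r\equiv 2\pmod 4$), and uses $r^{\circ 2}=r^{\gamma(r)}+r\not\equiv 0$ to rule out $\gamma(r)\in\{\sigma_{-1},\sigma_{2^{n-1}-1}\}$, leaving $\gamma(r)=\sigma_{2^{n-1}+1}$ and two choices for $\gamma(s)$. You instead parametrise by the residues modulo $4$, organise the six candidates into three $\aut(G)$-orbits indexed by $\gamma(2)$, and eliminate the two bad orbits by a direct failure of the gamma functional equation at $x=y=1$ or $x=1,y=2$. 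Both routes pin down $\gamma(2)=\sigma_{2^{n-1}+1}$ as the decisive constraint; the paper's is more structural (tied to the order of $r$ in $(G,\circ)$), while yours is a clean computational check that sidesteps any appeal to the specific generators $r,s$ and makes the ``at most two'' conclusion completely explicit.
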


\begin{proof}
    Let $N\le\hol(G)$ be a regular subgroup isomorphic to $\SD_{2^n}$. Let $\gamma\colon G\to\aut(G)$ be the gamma function associated with $N$ and denote by $\circ$ the induced circle operation on $G$ such that $$(G,\c)=\left\langle r,s\, :\,r^{\c 2^{n-1}}=s^{\c 2}=0,\ s\omo\c r\c s=r^{\c (2^{n-2}-1)}\right\rangle\cong\SD_{2^n}.$$
    It is easy to see that $\g(x)^2=\id$ for every $x\in G$. Notice that we need to have $\g(r)\neq\id,\g(s)\neq\id,\g(r)\neq\g(s)$,
    otherwise we would have $\gamma(G)\cong\C_1,\C_2$, which is not possible because of \thref{thm:gamma_image_c1c2}.    
    Moreover, since $\gamma(G)\le\Omega_1(\aut(G))$, and it contains at least three different elements, we conclude that $\gamma(G)=\Omega_1(\aut(G))$. We have, a priori, six possibilities for the gamma function $\gamma$, but we are going to conclude that there are, in fact, only two. 
    Since $r$ is a generator of the maximal cyclic subgroup of index 2 of $(G,\c)$, because of \thref{thm:zass_lemma}, it is not restrictive to assume that it is even. Moreover, 
    \begin{equation} \label{eqn:c2ng56_1}
        r^{\c 2} = r^{\gamma(r)}+r\not\equiv 0 \pmod{2^n}
    \end{equation}
    because its order is $2^{n-1}\neq 2$, for $n\ge 3$. Now observe that, because of \eqref{eqn:c2ng56_1} we have only one possibility for $\g(r)$.  Indeed, if $\g(r)=\s_{2^n-1}$, then
    \[
    \begin{aligned}
        r^{\c 2} = r^{\gamma(r)}+r = (2^n-1)r+r= 2^nr\equiv 0 \pmod{2^n}
    \end{aligned}
    \]
    in contradiction with \eqref{eqn:c2ng56_1}. In the same way, if $\g(r)=\s_{2^{n-1}-1}$, then
    \[
    \begin{aligned}
        r^{\c 2} = r^{\gamma(r)}+r =(2^{n-1}-1)r+r= 2^{n-1}r\equiv 0 \pmod{2^n}
    \end{aligned}
    \]
    because $r$ is even, again in contradiction with \eqref{eqn:c2ng56_1}. Therefore, we are forced to set $\g(r)=\s_{2^{n-1}+1}$ and by defining either $\gamma(s)=\s_{2^{n-1}-1}$ or $\gamma(s)=\s_{2^n-1}$, we obtain the conclusion.
\end{proof}

\begin{corollary} \thlabel{thm:gamma_image_v}
    Let $\gamma\colon G\to\aut(G)$ be a gamma function on $G$ such that $\gamma(G)\cong\V$. Then $\gamma$ is associated with a regular subgroup $N\le\hol(G)$ such that $N\cong\SD_{2^n}$.
\end{corollary}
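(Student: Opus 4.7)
The strategy is to pin down $\gamma$ explicitly and show that it must coincide with one of the two gamma functions $\gamma_5, \gamma_6$ exhibited in \thref{thm:c2ng1234}. Since $\aut(G) \cong \C_2 \times \C_{2^{n-2}}$ contains a unique Klein-four subgroup, namely
\[
\Omega_1(\aut(G)) = \{\sigma_1,\sigma_{2^n-1},\sigma_{2^{n-1}+1},\sigma_{2^{n-1}-1}\},
\]
and the hypothesis $\gamma(G) \cong \V$ forces $\gamma(G)$ to consist of the identity together with three involutions, we must have $\gamma(G) = \Omega_1(\aut(G))$.

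Next, \thref{thm:gamma_cyclic} implies that $\gamma$ is defined modulo $|\gamma(G)| = 4$, so it is determined by its four distinct values $\gamma(0),\gamma(1),\gamma(2),\gamma(3)$. Setting $g=h=0$ in the gamma functional equation yields $\gamma(0) = \id = \sigma_1$. The crux is then to apply the functional equation at $g=h=1$: writing $\gamma(1) = \sigma_a$ and exploiting that every element of $\V$ squares to $\id$, the equation collapses to $\gamma(a+1) = \id = \gamma(0)$, which (since $\gamma$ is defined modulo $4$) forces $a \equiv -1 \pmod 4$. An easy inspection of the four elements of $\Omega_1(\aut(G))$ modulo $4$, using that $2^{n-1} \equiv 0 \pmod 4$ for $n \ge 3$, leaves only $\sigma_{2^n-1}$ and $\sigma_{2^{n-1}-1}$ as candidates for $\gamma(1)$; the same reasoning applied at $g=h=3$ yields exactly the same two candidates for $\gamma(3)$.

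Since $\gamma$ is injective on residues modulo $4$, the values $\gamma(1)$ and $\gamma(3)$ must exhaust $\{\sigma_{2^n-1},\sigma_{2^{n-1}-1}\}$, and the remaining element $\sigma_{2^{n-1}+1}$ is forced to be $\gamma(2)$. This leaves exactly two possibilities for $\gamma$, which are precisely $\gamma_5$ and $\gamma_6$ from \thref{thm:c2ng1234}; hence $N \in \{N_5, N_6\}$ and $N \cong \SD_{2^n}$. There is no serious obstacle here, as the argument reduces to a constraint count; the only care required is tracking the residues modulo $4$ of the four involutions of $\aut(G)$, which is immediate once one writes them down.
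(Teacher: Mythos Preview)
Your argument is correct and in fact pins down $\gamma$ completely, yielding $\gamma\in\{\gamma_5,\gamma_6\}$ and hence $N\cong\SD_{2^n}$.

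Your route differs from the paper's. The paper invokes \thref{thm:zass_lemma} to obtain a generator $r$ of a cyclic maximal subgroup of $(G,\circ)$; since $r$ has $\circ$-order $2^{n-1}$, the relation $r^{\circ 2}=r^{\gamma(r)}+r\not\equiv 0\pmod{2^n}$ (together with the parity of $r$) rules out $\gamma(r)\in\{\sigma_{2^n-1},\sigma_{2^{n-1}-1}\}$, forcing $\gamma(r)=\sigma_{2^{n-1}+1}$ and leaving two choices for $\gamma(s)$, exactly as in the proof of \thref{thm:c2n!56}. You bypass the structural input from \thref{thm:zass_lemma} entirely: once \thref{thm:gamma_cyclic} tells you $\gamma$ is a bijection from $\Z/4\Z$ onto $\Omega_1(\aut(G))$, the functional equation at $g=h=1$ (and $g=h=3$) collapses to the residue constraint $a\equiv 3\pmod 4$ on the subscript of $\gamma(1)$ (resp.\ $\gamma(3)$), and the rest is bookkeeping. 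Your approach is more self-contained and arguably more transparent for this corollary; the paper's approach has the advantage of recycling verbatim the computation already done in \thref{thm:c2n!56}, so it is shorter in context.
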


\begin{proof}
    Let $N\le\hol(G)$ be the regular subgroup associated with $\gamma$ and let $\circ$ the induced circle operation on $G$ such that $(G,\circ)\cong N$. By the hypothesis, we know that $\g(x)^2=\id$ for every $x\in G$. From \thref{thm:zass_lemma}, let $r$ be a generator of the cyclic maximal subgroup $M\le N$ of index 2. Since $M\neq N$, let $s\in N\setminus M$ be such that $\g(s)\neq\id$ and $\g(s)\neq\g(r)$. Such an element $s$ must exist, otherwise we would have $\gamma(G)\cong\C_1,\C_2$, which is impossible. Since $$r^{\c 2} = r^{\gamma(r)}+r\not\equiv 0 \pmod{2^n}$$ because the order of $r$ is $2^{n-1}\neq 2$, arguing as in the proof of \thref{thm:c2n!56}, we obtain that $\g(r)=\s_{2^{n-1}+1}$ and either $\gamma(s)=\s_{2^{n-1}-1}$ or $\gamma(s)=\s_{2^n-1}$, that is the conclusion.
\end{proof}

\begin{proposition}
    There is exactly $1$ regular subgroup of $\hol(G)$ isomorphic to $\Q_{2^n}$.
\end{proposition}

\begin{proof}
    Let $N\le\hol(G)$ be a regular subgroup isomorphic to $\Q_{2^n}$. Let $\gamma\colon G\to\aut(G)$ be the gamma function associated with $N$ and denote by $\circ$ the induced circle operation on $G$ such that $$(G,\c)=\left\langle r,s\, :\,r^{\c 2^{n-2}}=s^{\c 2},\ r^{\c 2^{n-1}}=0,\ s\omo\c r\c s=r\omo\right\rangle\cong\Q_{2^n}.$$
    First of all, we want to prove that $\g(x)^2=\id$ for every $x\in G$. Since $\aut(G)$ is abelian and because of the previous presentation, it is enough to prove that $\g(r)^2=\g(s)^2=\id$. Since $\gamma\colon (G,\circ)\to\aut(G)$ is a group homomorphism, and again because $\aut(G)$ is abelian, we have 
    $$\g(s\omo\c r\c s)=\g(r\omo)\implies\g(s)\mo\g(r)\g(s)=\g(r)\mo\implies\g(r)^2=\id$$
    $$\g(s)^2=\g(s^{\c 2})=\g(r^{\c 2^{n-2}})=\g(r)^{2^{n-2}}=\id$$
    thus $\g(x)^2=\id$ for every $x\in G$, that is $\gamma(G)\le\Omega_1(\aut(G))\cong\V$. Observe that $|\g(G)|\neq 1$, otherwise we would have $\g=\g_1$, and we already know that $\g_1$ is associated with a cyclic regular subgroup of $\hol(G)$. In the same way, $|\g(G)|\neq 4$, otherwise, from \thref{thm:gamma_image_v}, we would have $\gamma$ associated with a semidihedral regular subgroup of $\hol(G)$. Thus we need to have $|\g(G)|=2$. Now, because of \thref{thm:gamma_cyclic}, $\g$ is defined modulo 2, and then, from \thref{thm:gamma_normal}, $N$ is associated with a regular normal subgroup of $\hol(G)$, but, from \thref{thm:c2n!1234}, there is a unique regular normal subgroup $N$ isomorphic to $\Q_{2^n}$, which is $N_3$. 
\end{proof}
    
\begin{proposition}
    There is exactly $1$ regular subgroup of $\hol(G)$ isomorphic to $\D_{2^n}$.
\end{proposition}    
    
\begin{proof}
    Let $N\le\hol(G)$ be a regular subgroup isomorphic to $\D_{2^n}$. Let $\gamma\colon G\to\aut(G)$ be the gamma function associated with $N$ and denote by $\circ$ the induced circle operation on $G$ such that $$(G,\c)=\left\langle r,s\, :\,r^{\c 2^{n-1}}=s^{\c 2}=0,\ s\omo\c r\c s=r\omo\right\rangle\cong\D_{2^n}.$$
    First of all, we want to prove that $\g(x)^2=\id$ for every $x\in G$. Since $\aut(G)$ is abelian and because of the previous presentation, it is enough to prove that $\g(r)^2=\g(s)^2=\id$. Since $\gamma\colon (G,\circ)\to\aut(G)$ is a group homomorphism, and again because $\aut(G)$ is abelian, we have 
    $$\g(s\omo\c r\c s)=\g(r\omo)\implies\g(s)\mo\g(r)\g(s)=\g(r)\mo\implies\g(r)^2=\id$$
    $$\g(s)^2=\g(0)=\id$$
    thus $\g(x)^2=\id$ for every $x\in G$, that is $\gamma(G)\le\Omega_1(\aut(G))\cong\V$. Observe that $|\g(G)|\neq 1$, otherwise we would have $\g=\g_1$, and we already know that $\g_1$ is associated with a cyclic regular subgroup of $\hol(G)$. In the same way, $|\g(G)|\neq 4$, otherwise, from \thref{thm:gamma_image_v}, we would have $\gamma$ associated with a semidihedral regular subgroup of $\hol(G)$. Thus we need to have $|\g(G)|=2$. Now, because of \thref{thm:gamma_cyclic}, $\g$ is defined modulo 2, and then, from \thref{thm:gamma_normal}, $N$ is associated with a regular normal subgroup of $\hol(G)$, but, from \thref{thm:c2n!1234}, there is a unique regular normal subgroup $N$ isomorphic to $\D_{2^n}$, which is $N_4$.

\end{proof}

\begin{lemma} \thlabel{thm:surjective_gamma_lemma}
    Let $\gamma\colon G\to\aut(G)$ be a gamma function associated with a regular subgroup $N$ isomorphic to either $\C_2\times\C_{2^{n-1}}$ or $\M_{2^n}$. Then $\gamma$ is surjective.
\end{lemma}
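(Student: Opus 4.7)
I would follow the approach of \thref{thm:c2n!56}: carefully choose a generating pair of $(G,\circ)\cong N$, use the circle-operation formula together with the defining relations of $N$ and some $2$-adic arithmetic to pin down $\gamma$ on this pair, and then verify that the values already generate $\aut(G)$.

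By \thref{thm:zass_lemma} (and its proof), $(G,\circ)$ has a cyclic maximal subgroup $M$ whose underlying set is the even elements of $G$, of circle-order $2^{n-1}$. Pick a generator $a$ of $M$ and write $\gamma(a)=\sigma_c$. Using the identity $a^{\circ k}=\frac{c^k-1}{c-1}\,a$ and the lifting-the-exponent formulas for $p=2$, the requirement that $a$ have circle-order exactly $2^{n-1}$ forces both $v_2(a)=1$ and $c\equiv 1\pmod 4$; hence $\gamma(a)\in\langle\sigma_5\rangle$, the unique maximal cyclic subgroup of $\aut(G)$, of order $2^{n-2}$.

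Next, pick $b\in(G,\circ)\setminus M$ with $b^{\circ 2}=0$, corresponding to the second generator in the presentation of $\C_2\times\C_{2^{n-1}}$ or $\M_{2^n}$, and write $\gamma(b)=\sigma_d$. From $b^{\circ 2}=0$ one gets $(d+1)b\equiv 0\pmod{2^n}$, so $d\in\{1,-1,2^{n-1}+1,2^{n-1}-1\}$. The (possibly twisted) commutation relation $b\circ a=a^{\circ\epsilon}\circ b$, with $\epsilon=1$ in the abelian case and $\epsilon=2^{n-2}+1$ in the modular case, translates via $x\circ y=x^{\gamma(y)}+y$ into the identity
\[
    (c-1)\,b\;\equiv\;(d\,A_\epsilon-1)\,a\pmod{2^n},\qquad A_\epsilon:=\frac{c^\epsilon-1}{c-1},
\]
in which $A_\epsilon$ is odd (for $\epsilon=2^{n-2}+1$ this is because $A_\epsilon$ is a sum of an odd number of odd terms).

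A short case analysis on $d$ then finishes the argument: for each $d\in\{1,\,2^{n-1}+1,\,2^{n-1}-1\}$, the constraint $(d+1)b\equiv 0\pmod{2^n}$ forces $b$ to be even, placing $b$ inside $M$ against our choice of $b$. Hence $d=-1$, and comparing $2$-adic valuations of the two sides of the key equation (the right-hand side has $v_2$ equal to $2$) yields $v_2(c-1)=2$; thus $\gamma(a)$ already generates $\langle\sigma_5\rangle$, and since $\gamma(b)=\sigma_{-1}\notin\langle\sigma_5\rangle$ one concludes $\gamma(G)\supseteq\langle\sigma_5,\sigma_{-1}\rangle=\aut(G)$. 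The main obstacle is organising the $2$-adic bookkeeping so that the two isomorphism types $N\cong\C_2\times\C_{2^{n-1}}$ and $N\cong\M_{2^n}$ can be handled by the same template, despite the different values of $\epsilon$ and hence of $A_\epsilon$.
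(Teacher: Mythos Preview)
Your proof is correct but takes a genuinely different route from the paper. The paper argues by contrapositive: it shows that every \emph{non-surjective} gamma function yields $N\cong\C_{2^n},\Q_{2^n},\D_{2^n}$ or $\SD_{2^n}$. The cases $|\gamma(G)|\le 2$ and $\gamma(G)\cong\V$ are dispatched immediately by the earlier \thref{thm:gamma_image_c1c2} and \thref{thm:gamma_image_v}; for the remaining proper images one has $\gamma(G)$ cyclic of order $2^{n-u}$ with $2\le u\le n-2$, and the paper computes the $\circ$-order of the element $1$ directly via $1^{\circ t}=\frac{(2k+1)^t-1}{2k}$ together with \thref{thm:five_lemma_mod_3}, concluding $N\cong\C_{2^n}$.

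Your approach instead works forward from the hypothesis $N\cong\C_2\times\C_{2^{n-1}}$ or $\M_{2^n}$, fixes a generating pair $(a,b)$, and solves for $\gamma(a),\gamma(b)$ from the group relations. This is more computational but also more informative: you essentially pin down $\gamma$ on generators (showing $\gamma(b)=\sigma_{-1}$ and $\gamma(a)$ a generator of $\langle\sigma_5\rangle$), not merely the cardinality of $\gamma(G)$. The paper's route is shorter within its own logic because it recycles the lemmas already established for $\Q_{2^n}$, $\D_{2^n}$ and $\SD_{2^n}$; your argument would stand independently of those. One point to make explicit: your deduction that $d=-1$ hinges on $b$ being odd, which in turn uses the specific claim from the proof of \thref{thm:zass_lemma} that the cyclic maximal subgroup $M$ may be taken to be the set of even elements---be sure to flag that dependence.
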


\begin{proof}
    Because of the complete description of isomorphism classes among regular subgroups of $\hol(G)$ of \thref{thm:classification_regular_2}, we show that every non-surjective gamma function $\gamma\colon G\to\aut(G)$ on $G$ is associated with a regular subgroup isomorphic to $\C_{2^n},\Q_{2^n},\D_{2^n},\SD_{2^n}$.  
    Let $\gamma\colon G\to\aut(G)$ be a non-surjective gamma function, let $N\le\hol(G)$ be its associated regular subgroup, and denote by $\circ$ the induced circle operation of $G$. Because of \thref{thm:gamma_image_c1c2}, we know that if $\gamma(G)\cong\C_1,\C_2$, then $N$ is isomorphic to either $\Q_{2^n},\D_{2^n}$ or one of the regular normal $\C_{2^n}$. In the same way, from \thref{thm:gamma_image_v}, if $\gamma(G)\cong\V$, we already know that $N\cong\SD_{2^n}$. Therefore, since $\gamma(G)\le\aut(G)$ and $|\!\aut(G)|=2^{n-1}$ we may assume that 
    \[
        \begin{cases}
            \gamma(G)\not\cong\V \\
            4\le|\gamma(G)|\le 2^{n-2}
        \end{cases}
    \]
    To conclude the proof it is enough to show that $N\cong\C_{2^n}$. Denote by $|\gamma(G)|=2^{n-u}$ for some $u\in\{2,\dots,n-2\}$ and note that, because of \thref{thm:gamma_cyclic}, $\gamma$ is defined modulo $2^{n-u}$. Consider $\gamma(1)\in\aut(G)$, and let $k\in\{0,\dots,2^{n-1}-1\}$ be such that $\gamma(1)=\sigma_{2k+1}$. We need to have $\gamma(1)^{2^{n-u}}=\id$, that is $\sigma_{2k+1}^{2^{n-u}}=\sigma_1$. By induction, we prove that \begin{equation} \label{eqn:surjective_gamma_lemma_3}
        1^{\c t}=\sum_{i=0}^{t-1} 1^{\gamma(1)^i}=\frac{(2k+1)^t-1}{2k}\quad\forall t\in\N.
    \end{equation}
    This information, together with \thref{thm:five_lemma_mod_3}, is enough to conclude that
    \[
    \begin{aligned}
        1^{\c t}=\frac{(2k+1)^i-1}{2k}\equiv 0\pmod{2^n} \iff t=2^n
    \end{aligned}
    \]
    so the element $1\in G$ has order $2^n$ with respect to $\circ$, therefore $(G,\circ)\cong N\cong\C_{2^n}$.
\end{proof}

\begin{proposition} \thlabel{thm:c2n!pm}
    There are exactly $2^{n-2}$ regular subgroups of $\hol(G)$ isomorphic to $\C_2\times\C_{2^{n-1}}$, and exactly $2^{n-2}$ regular subgroups of $\hol(G)$ isomorphic to $\M_{2^n}$.
\end{proposition}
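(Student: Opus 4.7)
The plan is to match the lower bound of $2^{n-2}$ regular subgroups of each isomorphism type (established in \thref{thm:c2nclasses} as the sizes of the $\aut(G)$-conjugation orbits of $N_\mathsf{p}$ and $N_\mathsf{m}$, whose members are isomorphic respectively to $\C_2\times\C_{2^{n-1}}$ and $\M_{2^n}$ by \thref{thm:gamma_alpha_lemma}) with a matching upper bound obtained by enumerating the gamma functions giving these isomorphism types. The first move is to reduce to the surjective case: let $N\le\hol(G)$ be regular with $N\cong\C_2\times\C_{2^{n-1}}$ or $N\cong\M_{2^n}$, and let $\gamma$ be its associated gamma function. By \thref{thm:surjective_gamma_lemma} (contrapositive), $\gamma$ is surjective, so $|\gamma(G)|=|\!\aut(G)|=2^{n-1}$; by \thref{thm:gamma_cyclic}, $\gamma$ is then defined modulo $2^{n-1}$, hence $\ker\gamma$ is the unique order-$2$ subgroup of $G$, namely $\{0,2^{n-1}\}$.

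Next, one fixes a presentation of $(G,\c)\cong N$ using \thref{thm:zass_lemma}: the circle group has a cyclic maximal subgroup of $\c$-order $2^{n-1}$, generated by some element $r\in G$ that may be chosen even; complete to $(G,\c)=\langle r,s\rangle$ with $s$ of $\c$-order $2$ and commutation $s\omo\c r\c s=r$ (for $\C_2\times\C_{2^{n-1}}$) or $s\omo\c r\c s=r^{\c(2^{n-2}+1)}$ (for $\M_{2^n}$). Translating via $x\c y=x^{\gamma(y)}+y$ (\thref{thm:srg1}), the three defining relations $r^{\c 2^{n-1}}=0$, $s^{\c 2}=0$, and the commutation become explicit congruences modulo $2^n$. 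The first, expanded through the inductive formula $r^{\c t}=\frac{\gamma(r)^t-1}{\gamma(r)-1}\,r$ (analogous to the one used in the proof of \thref{thm:surjective_gamma_lemma}) together with the arithmetic lemmas \thref{thm:five_lemma_mod_1} through \thref{thm:five_lemma_mod_3}, severely restricts $\gamma(r)$; the second forces $s^{\gamma(s)}\equiv -s\pmod{2^n}$; the third couples $\gamma(r)$ to $\gamma(s)$, closing the system.

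A direct case analysis, mirroring the one in \thref{thm:c2n!56}, should then show that the admissible pairs $(\gamma(r),\gamma(s))$ extend uniquely (using that $\gamma$ is defined modulo $2^{n-1}$) to exactly $2^{n-2}$ distinct gamma functions in each of the two cases, matching the lower bound. The main obstacle will be the bookkeeping in the $\M_{2^n}$ case: the twisted commutation $s\omo\c r\c s=r^{\c(2^{n-2}+1)}$ entangles $\gamma(r)$ and $\gamma(s)$ through the inductive expansion of $r^{\c(2^{n-2}+1)}$, so this system must be solved carefully to avoid overcounting equivalent parameterizations or omitting valid configurations, and to certify that precisely $2^{n-2}$ solutions (and no spurious gamma functions of a different type) survive.
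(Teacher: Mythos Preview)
Your approach is genuinely different from the paper's, and while it could in principle be made to work, it is considerably more laborious and you acknowledge that the key computational step is not carried out.

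The paper shares your opening move (\thref{thm:surjective_gamma_lemma} forces $\gamma$ to be surjective, hence defined modulo $2^{n-1}$ with $\ker\gamma=\{0,2^{n-1}\}$), but then diverges sharply. Instead of fixing a presentation of $(G,\circ)$ and solving the resulting congruences, the paper observes that a surjective $\gamma$ is completely encoded by the induced isomorphism
\[
\psi\colon (G,\circ)/\ker\gamma \longrightarrow \aut(G),\qquad (\ker\gamma)\circ x \longmapsto \gamma(x),
\]
and that, since $(G,\circ)/\ker\gamma \cong \aut(G) \cong \C_2\times\C_{2^{n-2}}$ regardless of whether $N\cong\C_2\times\C_{2^{n-1}}$ or $N\cong\M_{2^n}$, the number of such isomorphisms is bounded above by $|\aut(\aut(G))|=|\aut(\C_2\times\C_{2^{n-2}})|=2^{n-1}$. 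This gives a combined upper bound of $2^{n-1}$ for the two isomorphism types together, which matches the combined lower bound $2^{n-2}+2^{n-2}$ from \thref{thm:c2nclasses} and finishes the argument with no case analysis at all.

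Your route, by contrast, requires solving two separate systems of congruences (one per isomorphism type), keeping track of the freedom in the choice of generators $r,s$ to avoid overcounting, and verifying that no spurious solutions appear. You flag exactly these hazards yourself, and they are real: the map from ``admissible data $(r,s,\gamma(r),\gamma(s))$'' to ``gamma functions'' is not obviously injective, and the $\M_{2^n}$ commutation relation does couple the unknowns nontrivially. The paper's automorphism-counting trick buys you a clean upper bound that makes all of this bookkeeping unnecessary.
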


\begin{proof}
    We deal with the cases $\C_2\times\C_{2^{n-1}}$ and $\M_{2^n}$ at the same time, in particular, we prove that the total number of regular subgroups of $\hol(G)$ isomorphic to either $\C_2\times\C_{2^{n-1}}$ or $\M_{2^n}$ is $2^{n-1}$. Let $N\le\hol(G)$ be a regular subgroup isomorphic to either $\C_2\times\C_{2^{n-1}}$ or $\M_{2^n}$. Let $\gamma\colon G\to\aut(G)$ be the gamma function associated with $N$ and denote by $\circ$ the induced circle operation on $G$ such that $(G,\circ)\cong N$. Because of \thref{thm:surjective_gamma_lemma}, $\gamma\colon(G,\circ)\to\aut(G)$ is a surjective group homomorphism, thus there is an isomorphism 
    \begin{equation} \label{eqn:c2n!pm_1}
        \begin{aligned}
            \psi\colon\frac{(G,\circ)}{\ker\gamma}\to\aut(G) \\
        \end{aligned}
    \end{equation}
    defined by $\psi((\ker\gamma)\circ x)=\gamma(x)$ for every $x\in G$. From \thref{thm:gamma_cyclic} we know that $\gamma$ is defined modulo $2^{n-1}=|\gamma(G)|$, then an isomorphism of the form \eqref{eqn:c2n!pm_1} uniquely determines the gamma function $\gamma$. Moreover, we know that
    \begin{equation} \label{eqn:c2n!pm_2}
        \C_2\times\C_{2^{n-2}}\cong\aut(G)\cong\frac{(G,\circ)}{\ker\gamma}
    \end{equation}
    which implies that, instead of counting automorphisms of the form \eqref{eqn:c2n!pm_1}, we can conclude by counting the isomorphisms of $\aut(G)$ into itself, that is the automorphisms of $\aut(G)$. Hence
    $$|\!\aut(\aut(G))|=|\!\aut(\C_2\times\C_{2^{n-2}})|=2^{n-1}=2^{n-2}+2^{n-2}.$$
    This implies that there are at most $2^{n-1}$ regular subgroups of $\hol(G)$ isomorphic to either $\C_2\times\C_{2^{n-1}}$ or $\M_{2^n}$ is $2^{n-1}$, and since we already know that there are at least $2^{n-2}$ of each kind, so the conclusion follows.
\end{proof}

\subsubsection{Mutual normalization problem}

So far we established the existence and uniqueness of the vertices of the local normalizing graph of a cyclic $2-$group. In this section, we prove the existence and uniqueness of the edges of the graph, that is, we highlight all and the only pairs of regular subgroups of $\hol(G)$ that mutually normalize each other. We start with the most important and general result of this work, and then we apply it to obtain the answer in the cyclic group case.

\begin{theorem} \thlabel{thm:neo}
    Let $(G,\cdot)$ be a group such that $\aut(G)$ is abelian, and let $N,M\le\hol(G)$ be regular subgroups. Denote by $\gamma\colon G\to\aut(G)$ and $\delta\colon G\to\aut(G)$ the gamma functions, and by $\circ$ and $\bullet$ the circle operations associated with $N$ and $M$, respectively. Then
    $N$ and $M$ mutually normalize each other if and only if
    \begin{equation} \label{eqn:neo}
        \begin{cases}
            \gamma(h)=\gamma\left(h\cdot(g\circ h)\mo \cdot (h\bullet g)\right) \\
            \delta(h)=\delta\left(h\cdot(g\bullet h)\mo \cdot (h\circ g)\right) \\
        \end{cases}\quad\forall g,h\in G.
    \end{equation}
\end{theorem}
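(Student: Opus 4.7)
The plan is to reduce "mutual normalization" to a pointwise identity by computing the conjugate $\mu(g)^{\nu(h)}$ explicitly in $\hol(G)$ and then reading off the condition from the uniqueness of the $\aut(G)\ltimes\rho(G)$-decomposition. By \thref{thm:srg1}, every element of $N$ is uniquely of the form $\nu(h)=\gamma(h)\rho(h)$ and every element of $M$ of the form $\mu(g)=\delta(g)\rho(g)$, so ``$N$ normalizes $M$'' is equivalent to $\mu(g)^{\nu(h)}\in M$ for every $g,h\in G$, and symmetrically for $M$ normalizing $N$.

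Expanding
\[
\mu(g)^{\nu(h)}=\rho(h)^{-1}\gamma(h)^{-1}\delta(g)\rho(g)\gamma(h)\rho(h),
\]
I would push all $\aut(G)$-factors to the left using the standard identity $\rho(x)\alpha=\alpha\rho(x^{\alpha})$ in $\hol(G)$. The hypothesis that $\aut(G)$ is abelian is used \emph{exactly once}, to commute $\gamma(h)^{-1}$ past $\delta(g)$; the pair of $\gamma(h)^{\pm 1}$ flanking $\rho(g)$ then collapses via $\gamma(h)^{-1}\rho(g)\gamma(h)=\rho(g^{\gamma(h)})$, and moving the remaining $\rho(h^{-1})$ past $\delta(g)$ in the same way produces
\[
\mu(g)^{\nu(h)} \;=\; \delta(g)\cdot\rho\bigl((h^{\delta(g)})^{-1}\cdot g^{\gamma(h)}\cdot h\bigr).
\]

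Because the decomposition in $\hol(G)$ is unique and every element of $M$ has the shape $\delta(k)\rho(k)$, this element lies in $M$ iff $\delta(g)=\delta(k)$ with $k=(h^{\delta(g)})^{-1}\cdot g^{\gamma(h)}\cdot h$. Using part (b) of \thref{thm:srg1} for both skew braces, namely $g\circ h=g^{\gamma(h)}\cdot h$ and $h\bullet g=h^{\delta(g)}\cdot g$, I rewrite $(h^{\delta(g)})^{-1}=g\cdot(h\bullet g)^{-1}$ and obtain $k=g\cdot(h\bullet g)^{-1}\cdot(g\circ h)$. Hence $N$ normalizes $M$ iff
\[
\delta(g)\;=\;\delta\bigl(g\cdot(h\bullet g)^{-1}\cdot(g\circ h)\bigr)\qquad\forall g,h\in G,
\]
which, after swapping the dummy labels $g\leftrightarrow h$, is the second line of \eqref{eqn:neo}. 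The entirely symmetric computation, interchanging $(N,\gamma,\circ)$ with $(M,\delta,\bullet)$, shows that $M$ normalizes $N$ iff the first line of \eqref{eqn:neo} holds, so mutual normalization is equivalent to \eqref{eqn:neo} in its entirety.

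The computation is purely mechanical and the main source of difficulty is pure bookkeeping: several $\aut$- and $\rho$-factors must be commuted past one another in the right order, and the resulting argument of $\rho$ has to be translated from the ambient $\cdot$-notation into the two circle operations. The one conceptual point is that the hypothesis $\aut(G)$ abelian is what lets us drop $\gamma(h)^{-1}\delta(g)\gamma(h)$ to $\delta(g)$; without it, a pointwise condition still exists but it acquires a $\gamma(h)$-twist of $\delta$ and no longer packages cleanly into equations involving only $\gamma$, $\delta$, $\circ$, and $\bullet$.
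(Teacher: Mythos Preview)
Your argument is correct and follows essentially the same route as the paper: compute the conjugate of an element of $M$ by an element of $N$ inside $\hol(G)=\aut(G)\ltimes\rho(G)$, use commutativity of $\aut(G)$ once to slide $\gamma(h)^{-1}$ past $\delta(g)$, read off the unique $\aut(G)\cdot\rho(G)$ decomposition, and translate the resulting $\rho$-argument into the two circle operations. The only cosmetic differences are that the paper conjugates $\mu(h)$ by $\nu(g)$ (so no final relabeling $g\leftrightarrow h$ is needed) and, instead of pushing all factors to the left, it moves the stray $\rho(g)^{-1}$ to the right-hand side before comparing decompositions; neither change is substantive.
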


\begin{proof}
    Denote by $\nu\colon(G,\circ)\to N$ and $\mu\colon(G,\bullet)\to M$ the isomorphisms associated with $N$ and $M$, respectively.  Then $N$ normalizes $M$ if for every $n\in N$ and every $m\in M$, we have $n\mo m n\in M$, that is, if and only if for all $g,h\in G$ there exists $u\in G$ such that $\nu(g)\mo\mu(h)\nu(g)=\mu(u)$. Hence
    \begin{align*}
        \nu(g)\mo\mu(h)\nu(g)=\mu(u) &\iff (\gamma(g)\rho(g))\mo(\delta(h)\rho(h))(\gamma(g)\rho(g))=\delta(u)\rho(u) \\
        &\iff \rho(g)\mo\delta(h)\gamma(g)\mo\rho(h)\gamma(g)\rho(g)=\delta(u)\rho(u) \\
        &\iff \delta(h)\rho(h^{\gamma(g)})\rho(g)=(\delta(u)\delta(u)\mo)\rho(g)\delta(u)\rho(u) \\
        &\iff \delta(h)\rho(h^{\gamma(g)}\cdot g)=\delta(u)\rho(g^{\delta(u)}\cdot u)
    \end{align*}    
    However, we know that the representation of an element in $\hol(G)=\aut(G)\rho(G)$ is unique, hence the last statement holds if and only if for all $g,h\in G$
    \begin{align*}
        &\begin{cases}
        \delta(h) = \delta(u) \\
        \rho(h^{\gamma(g)}\cdot g) = \rho(g^{\delta(u)}\cdot u) \\
    \end{cases}
    \iff
        \begin{cases}
            \delta(h) = \delta(u) \\
            h^{\gamma(g)}\cdot g = g^{\delta(h)}\cdot u
        \end{cases}
    \iff
        \begin{cases}
            \delta(h) = \delta(u) \\
            (g^{\delta(h)}\cdot h\cdot h\mo)\mo\cdot h^{\gamma(g)}\cdot g =  u
        \end{cases}
    \\[1em]&\iff
        \begin{cases}
            \delta(h) = \delta(u) \\
            h\cdot(g^{\delta(h)}\cdot h)\mo\cdot h^{\gamma(g)}\cdot g =  u
        \end{cases}
    \iff
        \begin{cases}
            \delta(h) = \delta(u) \\
            u = h\cdot(g\bullet h)\mo \cdot (h\circ g)
        \end{cases}
    \iff
        \delta(h)=\delta\left(h\cdot(g\bullet h)\mo \cdot (h\circ g)\right).
    \end{align*} 
    Symmetrically, the fact that $M$ normalizes $N$ is equivalent to the condition $\gamma(h)=\gamma\left(h\cdot(g\circ h)\mo \cdot (h\bullet g)\right)$, for every $g,h\in G$.
\end{proof}

\begin{definition}
    Let $(G,\cdot)$ be a group and let $\gamma,\delta\colon G\to\aut(G)$ be two gamma functions on $G$. We say that $\gamma$ and $\delta$ \textbf{mutually normalize each other} if their associated regular subgroups of $\hol(G)$ mutually normalize each other, or equivalently, if $\aut(G)$ is abelian, if $\gamma$ and $\delta$ fulfill the condition \eqref{eqn:neo}.
\end{definition}

In the case where $G$ is a cyclic group, we obtain the following fundamental result.

\begin{corollary} \thlabel{thm:neo3}
    Let $G=\C_m$ be a cyclic group of order $m$, and let $\gamma,\delta\colon G\to\aut(G)$ be two gamma functions on $G$. Suppose that $\gamma$ is defined modulo $q$ and that $\delta$ is defined modulo $r$. Then
    $\gamma$ and $\delta$ mutually normalize each other if and only if
    \[
    \begin{cases}
        x\equiv x^{\delta(y)}+y-y^{\gamma(x)} \pmod{q} \\
        x\equiv x^{\gamma(y)}+y-y^{\delta(x)} \pmod{r} \\
    \end{cases}\quad\forall x,y\in G.
    \]
\end{corollary}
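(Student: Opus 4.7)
The plan is to apply Theorem~\ref{thm:neo} directly: $\aut(\C_m)$ is abelian, so the hypothesis is satisfied, and I only need to translate each of the two equalities of automorphisms produced by \eqref{eqn:neo} into a modular congruence using the ``defined modulo'' hypotheses on $\gamma$ and $\delta$. Throughout I would work additively in $G$ (which is legitimate since $G$ is abelian), and repeatedly use the formula $g\circ h=g^{\gamma(h)}+h$ together with its analogue $g\bullet h=g^{\delta(h)}+h$, both coming from Theorem~\ref{thm:srg1}(b).

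First I would simplify the expression $h\cdot(g\circ h)\mo\cdot(h\bullet g)$ appearing inside $\gamma$ in the first line of \eqref{eqn:neo}: substituting the formulas just recalled,
\[
h-(g\circ h)+(h\bullet g)=h-g^{\gamma(h)}-h+h^{\delta(g)}+g=g-g^{\gamma(h)}+h^{\delta(g)}.
\]
Thus the first condition of \eqref{eqn:neo} reads $\gamma(h)=\gamma\bigl(g-g^{\gamma(h)}+h^{\delta(g)}\bigr)$ for all $g,h\in G$. Since $\gamma$ is defined modulo $q$, this equality is equivalent to the congruence $h\equiv g-g^{\gamma(h)}+h^{\delta(g)}\pmod{q}$, and after the relabelling $g\mapsto y$, $h\mapsto x$ this is precisely $x\equiv x^{\delta(y)}+y-y^{\gamma(x)}\pmod{q}$, the first line of the statement. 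I would then repeat the identical computation for the second line of \eqref{eqn:neo}, with the roles of $\gamma$ and $\delta$ interchanged and using the hypothesis that $\delta$ is defined modulo $r$, to obtain the second congruence.

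There is no genuine obstacle here: the argument is a mechanical specialization of Theorem~\ref{thm:neo} to the abelian setting. The only points deserving care are the sign conventions when passing from $(g\circ h)\mo$ to $-(g\circ h)$ in additive notation, and the variable relabelling — specifically that $h$, the argument of $\gamma$ on the left of the first equation, is forced to correspond to $x$ (rather than $y$) by the pattern of arguments on the right-hand side. With these two bookkeeping points observed, the proof is complete in a few lines as sketched above.
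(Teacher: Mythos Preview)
Your proposal is correct and is precisely the intended argument: the paper states this corollary without proof, treating it as an immediate specialization of Theorem~\ref{thm:neo} to the additive cyclic case via the ``defined modulo'' hypothesis, which is exactly what you carry out. Your bookkeeping on the sign in $-(g\circ h)$ and on the relabelling $h\mapsto x$, $g\mapsto y$ is accurate.
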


Roughly speaking, we have translated the tough group-theoretical notion of “mutual normalization of regular subgroups" only in terms of a pair of equation in modular arithmetic, which is easier both to be proved or disproved. By using \thref{thm:neo3}, we are ready to solve the mutual normalization problem. Again, we proceed by steps. After some notation, we prove the mutual normalization among pairs of gamma functions associated with regular subgroups belonging to different isomorphism types, trying all the possible combinations. In the end, since we have a complete characterization in \thref{thm:neo3}, it will be easy to conclude that no other mutual normalizations can exist.

\begin{proposition} \thlabel{thm:neo_123456}
    We have
    \begin{enumerate}
        \item $\{\gamma_1,\gamma_2,\gamma_3,\gamma_4\}$ mutually normalize each other.
        \item $\{\gamma_5,\gamma_6\}$ mutually normalize each other.
        \item $\{\gamma_3,\gamma_4,\gamma_5,\gamma_6\}$ mutually normalize each other.
    \end{enumerate}
\end{proposition}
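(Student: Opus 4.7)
The plan is to split the three items into two arguments of very different natures. Item (i) is essentially free: by \thref{thm:c2ng1234}, each of $N_1, N_2, N_3, N_4$ is normal in $\hol(G)$, so any two of them normalize each other simply because they are both normal in a common overgroup. Thus item (i) follows immediately, without any appeal to \thref{thm:neo3}.

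For items (ii) and (iii) I would invoke \thref{thm:neo3} pair by pair. Two observations will make the resulting case analysis essentially uniform. First, every $\alpha\in\aut(G)$ is of the form $\sigma_{2k+1}$ and hence acts trivially modulo $2$, so any congruence of \thref{thm:neo3} with modulus $q=2$ collapses to $x\equiv x \pmod 2$ and is automatic. This handles the ``easy direction'' of the four pairs $(\gamma_i,\gamma_j)$, $i\in\{3,4\}$, $j\in\{5,6\}$, that form the new content of (iii); it is the gamma-theoretic counterpart of the fact that $N_3,N_4$ are normal in $\hol(G)$ and hence normalized by every subgroup.

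Second, each of $\gamma_3,\gamma_4,\gamma_5,\gamma_6$ takes values in $\Omega_1(\aut(G))=\{\sigma_{\pm 1},\sigma_{2^{n-1}\pm 1}\}$, and for $n\ge 3$ one has $2^{n-1}\equiv 0\pmod 4$, so each of these automorphisms acts on any $y\in G$ as $\pm y$ modulo $4$. A direct inspection of the defining formulas then yields the uniform identity
\[
y^{\gamma_i(x)}\equiv y\pmod 4 \text{ if } x \text{ is even},\qquad y^{\gamma_i(x)}\equiv -y\pmod 4 \text{ if } x \text{ is odd},
\]
valid for every $i\in\{3,4,5,6\}$ and every $x,y\in G$. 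Substituting this into the modulo-$4$ congruence of \thref{thm:neo3} reduces it, in each of the four parity cases of $(x,y)$, to one of the trivially true statements $2x\equiv 0\pmod 4$ (when $x$ is even), $2y\equiv 0\pmod 4$ (when $y$ is even), or $2x\equiv 2y\pmod 4$ (when both $x$ and $y$ are odd). The same identity dispatches both the $(\gamma_5,\gamma_6)$ pair of (ii) (where both moduli are $4$) and the four remaining pairs of (iii) at once.

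The main obstacle is purely bookkeeping: one has to apply \thref{thm:neo3} symmetrically to each pair (tracking which modulus belongs to $\gamma$ and which to $\delta$), and patiently verify the parity identity above directly from the tables of \thref{thm:c2ng1234}. Once these two reductions are in place, no further group-theoretic input is required.
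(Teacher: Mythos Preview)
Your proof is correct. The paper's own proof is the one-line ``It follows directly from \thref{thm:neo3}, where most of the equations are trivially satisfied,'' so your treatment of (ii) and (iii) is precisely an explicit working-out of what the paper asserts; the uniform parity identity $y^{\gamma_i(x)}\equiv(-1)^{x}y\pmod 4$ for $i\in\{3,4,5,6\}$ is exactly the right way to organise that verification. Your handling of (i) via the normality of $N_1,\dots,N_4$ in $\hol(G)$ (from \thref{thm:c2ng1234}) is a mild but legitimate shortcut relative to the paper, which applies \thref{thm:neo3} uniformly to all three items: you trade an arithmetic check for a structural observation, which is cleaner and avoids any computation for (i).
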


\begin{proof} \mbox{}
    \begin{enumerate} 
        \item Since automorphisms of $G$ are multiplications by odd numbers, we have that $x^\alpha\equiv x\pmod2$ for all $x\in G$ and every $\alpha\in\aut(G)$. Thus, for every pair of gamma functions defined modulo $2$, the equations of \thref{thm:neo3} trivially hold, that is, every pair of gamma functions defined modulo 2 mutually normalize each other. The same computations hold if we substitute any gamma functions defined modulo 2 with the trivial function $\g_1$.
        \item Denote by $\g=\g_5$ and $\d=\g_6$, for the sake of simplicity. Observe that for every $x\in G$
        \begin{equation} \label{eqn:neog56}
            \begin{aligned}
                & x\equiv 0,2\pmod4 \implies y^{\g(x)}\equiv y^{\d(x)}\equiv y\pmod4 \\
                & x\equiv 1,3\pmod4 \implies y^{\g(x)}\equiv y^{\d(x)}\equiv -y\pmod4
            \end{aligned}
        \end{equation}
        Consider the first equation of \thref{thm:neo3} and let $x,y\in G$. If $y$ is even, taking into account \eqref{eqn:neog56}, it turns out to be $$x\equiv x+y-y^{\d(x)}\iff y\equiv y^{\d(x)}\iff y\equiv\pm y\equiv y\pmod4$$
        which is true because $y$ is even. If $y$ is odd, then it becomes $$x\equiv -x +y-y^{\d(x)}\iff 2x\equiv y-y^{\d(x)}\pmod4$$ which is true whether $x$ is even or odd. The second equation can be verified in the same way. Then $\g$ and $\d$ mutually normalize each other.
        \item We already know that $\{\g_3,\g_4\}$ and $\{\g_5,\g_6\}$ mutually normalize each other, respectively. Because of the structure of these gamma functions, we can prove all the remaining mutual normalizations in one shot. Let $\g\in\{\g_3,\g_4\}$ and $\d\in\{\g_5,\g_6\}$. Because of \thref{thm:neo3}, $\g$ and $\d$ mutually normalize each other if and only if 
        \[
        \begin{cases}
            x\equiv x^{\delta(y)}+y-y^{\gamma(x)} \pmod{2} \\
            x\equiv x^{\gamma(y)}+y-y^{\delta(x)} \pmod{4} \\
        \end{cases}\quad\forall x,y\in G.
        \]
        The first equation can be easily verified because for $x,y\in G$
        $$x^{\delta(y)}+y-y^{\gamma(x)}=x+y-y\equiv x \pmod{2}.$$ For the second one, if $y$ is even, it turns out to be $$x\equiv x+y-y^{\d(x)}\iff y\equiv y^{\d(x)}\iff y\equiv\pm y\equiv y\pmod4$$
        which is true because $y$ is even. If $y$ is odd, then it becomes $$x\equiv -x +y-y^{\d(x)}\iff 2x\equiv y-y^{\d(x)}\pmod4$$ which is true whether $x$ is even or odd. Then $\g$ and $\d$ mutually normalize each other.
    \end{enumerate}
\end{proof}

\begin{definition} \thlabel{def:gamma_k}
    Let $\gamma\colon G\to\aut(G)$ be a gamma function on $G$ and let $\sigma_{2k+1}\in\aut(G)$, for some $k\in\{0,\dots,2^{n-1}-1\}$. We denote by $\gamma^k$ the conjugate gamma function of $\gamma$ under $\sigma_{2k+1}\mo\in\aut(G)$ as in \thref{thm:gamma_alpha_lemma}, that is $$\gamma^k =\gamma^{\sigma_{2k+1}\mo}.$$
\end{definition}

\begin{definition}
    We denote as follows some relevant families of gamma functions associated with regular subgroups of $\hol(G)$.
    \[
    \begin{aligned}
        &\Gamma_{\mathsf{p}}=\left\{\gamma_{\mathsf{p}}^k\, :\,0\le k<2^{n-1}\right\}, \\
        &\Gamma_{\mathsf{m}}=\left\{\gamma_{\mathsf{m}}^k\, :\,0\le k<2^{n-1}\right\}, \\
        &\Gamma_{\mathsf{c}}=\left\{\gamma_{\mathsf{c},u}^k\, :\,0\le k<2^{n-1},\ 2\le u\le n\right\}.
    \end{aligned}
    \]
\end{definition}

\begin{lemma} \thlabel{thm:neo_c}
    Two gamma functions $\gamma_{\mathsf{c},u}^k,\gamma_{\mathsf{c},v}^h\in\Gamma_\mathsf{c}$ mutually normalize each other if and only if
    \[
        2^u(2k+1)\equiv 2^v(2h+1)\pmod{2^{n-\max\{u,v\}}}
    \]
\end{lemma}

\begin{proof}
    We know that $\gamma_{\mathsf{c},u}^k$ is defined modulo $2^{n-u}$ and that $\gamma_{\mathsf{c},v}^h$ is defined modulo $2^{n-v}$, therefore from \thref{thm:neo3}, they mutually normalize each other if and only if for every $x,y\in G$
    \begin{align*}
        &\begin{cases}
            x\equiv x^{\gamma_{\mathsf{c},v}^h(y)}+y-y^{\gamma_{\mathsf{c},u}^k(x)} \pmod{2^{n-u}} \\
            x\equiv x^{\gamma_{\mathsf{c},u}^k(y)}+y-y^{\gamma_{\mathsf{c},v}^h(x)} \pmod{2^{n-v}} 
        \end{cases}
        \\[1em]
        &\begin{cases}
            x\equiv x^{\s_{2^v(2h+1)y+1}}+y-y^{\s_{2^u(2k+1)x+1}} \pmod{2^{n-u}} \\
            x\equiv x^{\s_{2^u(2k+1)y+1}}+y-y^{\s_{2^v(2h+1)x+1}} \pmod{2^{n-v}} 
        \end{cases}
        \\[1em]
        &\begin{cases}
            x\equiv (2^v(2h+1)y+1)x+y-(2^u(2k+1)x+1)y \pmod{2^{n-u}} \\
            x\equiv (2^u(2k+1)y+1)x+y-(2^v(2h+1)x+1)y \pmod{2^{n-v}} 
        \end{cases}
        \\[1em]
        &\begin{cases}
            2^u(2k+1)xy\equiv 2^v(2h+1)xy \pmod{2^{n-u}} \\
            2^u(2k+1)xy\equiv 2^v(2h+1)xy \pmod{2^{n-v}} 
        \end{cases}
    \end{align*}
    Observing that these equations are equivalent if we substitute $x=y=1$, and that one implies the other in case $u$ and $v$ were different, the proof is accomplished.

\end{proof}

\begin{proposition} \thlabel{thm:neo_c1}
    The family $$H=\left\{\gamma_{\mathsf{c},u}^k\in\Gamma_\mathsf{c}\, :\, \ceil{\frac{n}{2}}\le u\le n\right\}$$
    is composed of $2^{n-\ceil{\frac{n}{2}}}$ gamma functions, and they mutually normalize each other.
\end{proposition}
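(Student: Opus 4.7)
The plan is to verify two claims independently: the cardinality formula $|H|=2^{n-\ceil{n/2}}$, and the mutual normalization of every pair of elements of $H$. Both reduce to direct applications of results already proved earlier in the paper, and amount essentially to bookkeeping with powers of two.

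For the cardinality, I would sum, over $u\in\{\ceil{n/2},\dots,n\}$, the size of the $\aut(G)$-orbit $\{\gamma_{\mathsf{c},u}^k:0\le k<2^{n-1}\}$. By \thref{thm:c2nclasses}(iv), combined with \thref{thm:gamma_conj_class_regular}, this orbit has cardinality $2^{n-u-1}$ for $u<n$ and cardinality $1$ for $u=n$. The orbits for distinct values of $u$ are disjoint, since conjugation preserves $|\gamma(G)|$ by \thref{thm:gamma_alpha_lemma}, and this cardinality equals the modulus of definition $2^{n-u}$ by \thref{thm:gamma_cyclic}. Summing the resulting geometric series yields
\[
|H|=1+\sum_{u=\ceil{n/2}}^{n-1}2^{n-u-1}=1+\bigl(2^{n-\ceil{n/2}}-1\bigr)=2^{n-\ceil{n/2}}.
\]

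For mutual normalization, \thref{thm:neo_c} reduces the verification for each pair $\gamma_{\mathsf{c},u}^k,\gamma_{\mathsf{c},v}^h\in H$ to the two congruences
\[
2^u(2k+1)\equiv 2^v(2h+1)\pmod{2^{n-u}},\qquad 2^u(2k+1)\equiv 2^v(2h+1)\pmod{2^{n-v}}.
\]
The hypothesis $u,v\ge\ceil{n/2}$ forces $n-u,n-v\le\floor{n/2}\le\min\{u,v\}$, so both $2^u$ and $2^v$ vanish modulo $2^{n-u}$ and modulo $2^{n-v}$. Both congruences therefore collapse to $0\equiv 0$ and are satisfied trivially.

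There is no genuine obstacle here: once \thref{thm:neo_c} and the conjugacy-class enumeration in \thref{thm:c2nclasses} are in place, the statement becomes a routine bookkeeping exercise. The only mildly delicate point, which I would make explicit, is the disjointness of the $\aut(G)$-orbits across different $u$ in the counting step, without which one could not simply add the orbit sizes; the invariance of the modulus of definition under conjugation is precisely what secures this.
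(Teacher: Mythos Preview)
Your proof is correct and follows essentially the same approach as the paper: the mutual normalization is deduced from \thref{thm:neo_c}, and the cardinality is obtained by summing the orbit sizes from \thref{thm:c2nclasses}. You are in fact more explicit than the paper on two points it leaves implicit---the verification that both congruences in \thref{thm:neo_c} trivialize to $0\equiv 0$ when $u,v\ge\ceil{n/2}$, and the disjointness of the orbits across distinct values of $u$---but the underlying argument is the same.
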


\begin{proof}
    The fact that they mutually normalize each other follows directly from \thref{thm:neo_c}. To determine the cardinality of $H$, we know that the conjugacy class of each $\gamma_{\mathsf{c},u}$ contains exactly $2^{n-u-1}$ elements, for every $u\in\{2,\dots,n-1\}$, and that the conjugacy class of $\gamma_{\mathsf{c},n}=\gamma_1$ is a singleton. Therefore
    \begin{equation*}
        |H| = 1+\sum_{u=\ceil{\frac{n}{2}}}^{n-1} 2^{n-u-1} =2^{n-\ceil{\frac{n}{2}}}
    \end{equation*}  
\end{proof}

\begin{proposition} \thlabel{thm:neo_c2}
    Let $\gamma_{\mathsf{c},u}^k,\gamma_{\mathsf{c},v}^h\in\Gamma_\mathsf{c}$ be two gamma functions such that either $2\le v<\ceil{\frac{n}{2}}\le u\le n$ or $2\le v < u <\ceil{\frac{n}{2}}.$ Then $\gamma_{\mathsf{c},u}^k$ and $\gamma_{\mathsf{c},v}^h$ do not mutually normalize each other.
\end{proposition}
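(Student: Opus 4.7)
My plan is to invoke \thref{thm:neo_c} directly and show that, under either hypothesis, the stronger of its two congruences already fails. Both cases assume $v<u$, so the second congruence
\[
2^u(2k+1)\equiv 2^v(2h+1)\pmod{2^{n-v}}
\]
is the one on the larger modulus and therefore the binding constraint. The first step is then to factor the difference as
\[
2^u(2k+1) - 2^v(2h+1) = 2^v\bigl[\+2^{u-v}(2k+1) - (2h+1)\bigr].
\]
Since $u>v$, the term $2^{u-v}(2k+1)$ is even whereas $2h+1$ is odd, so the bracket is odd. Consequently the $2$-adic valuation of $2^u(2k+1) - 2^v(2h+1)$ equals exactly $v$, independently of the values of $k$ and $h$.

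The conclusion is then immediate: for the displayed congruence to hold we would need $v\ge n-v$, i.e.\ $2v\ge n$, i.e.\ $v\ge\ceil{n/2}$. But both hypotheses explicitly rule this out, since they require $v<\ceil{n/2}$. This contradiction shows that the second congruence of \thref{thm:neo_c} cannot be satisfied, and therefore $\gamma_{\mathsf{c},u}^k$ and $\gamma_{\mathsf{c},v}^h$ do not mutually normalize each other.

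There is essentially no obstacle here beyond bookkeeping; the only spot that merits a second look is the boundary value $u=n$ allowed in the first hypothesis, where $\gamma_{\mathsf{c},n}=\gamma_1$ is constant and the \emph{first} congruence of \thref{thm:neo_c} becomes vacuous (being modulo $2^0=1$). This does not affect the argument because the whole proof rests on the second congruence, and the oddness of the bracket $2^{u-v}(2k+1)-(2h+1)$ only uses $u>v$, which is still in force when $u=n$.
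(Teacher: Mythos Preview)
Your proof is correct and uses the same core idea as the paper --- reducing via \thref{thm:neo_c} and then reading off the $2$-adic valuation of $2^u(2k+1)-2^v(2h+1)$ --- but you have organized it more cleanly. The paper treats the two hypotheses separately, and in the first case it further splits into the subcases $u+v\ge n$ and $u+v<n$ to derive a contradiction with both congruences; it then dismisses the second hypothesis with ``similar''. You instead observe that, since $v<u$ in both hypotheses, the congruence modulo $2^{n-v}$ is always the stronger one, and your valuation computation $\nu_2\bigl(2^u(2k+1)-2^v(2h+1)\bigr)=v$ kills it directly via $v<\ceil{n/2}$, with no case split needed. This buys you a uniform one-line argument in place of the paper's case analysis; your remark about the boundary $u=n$ is also a nice sanity check that the paper does not make explicit.
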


\begin{proof}
    Assume the first condition. Observe that, since $u\ge\ceil{\frac{n}{2}}$, we have
    $2^{n-u} \le 2^{n-\ceil{\frac{n}{2}}} \le 2^{\frac{n}{2}} \le 2^{\ceil{\frac{n}{2}}} \le 2^u$,
    then $2^u\equiv 0\pmod{2^{n-u}}$. In the same way, since $v < \ceil{\frac{n}{2}}$ we have $v \le \floor{\frac{n}{2}}$ and
    $2^{n-v} \ge 2^{n-\floor{\frac{n}{2}}} \ge 2^{\frac{n}{2}} \ge 2^{\floor{\frac{n}{2}}} \ge 2^v$,
    that is $2^{n-v}\ge 2^v$, and the equality holds if and only if $v=\frac{n}{2}$ but this is impossible since $v < \ceil{\frac{n}{2}}$. Thus $2^{n-v} > 2^v$ and $2^v\not\equiv 0\pmod{2^{n-v}}$. If we neglect the invertible odd factors, we may rewrite the conditions of \thref{thm:neo_c} equivalently as
    \begin{equation} \label{eqn:neo_c2_2}
        \begin{cases}
            2^v\equiv 0\pmod{2^{n-u}} \\
            2^u\not\equiv 0\pmod{2^{n-v}} 
        \end{cases}
    \end{equation}
    We need to distinguish among two cases. If $u+v\ge n$, then $u\ge n-v$ implies that $2^u\equiv 0\pmod{2^{n-v}}$, in contradiction with \eqref{eqn:neo_c2_2}. Otherwise, if $u+v < n$, then $v < n-u$ implies that $2^v\not\equiv 0\pmod{2^{n-u}}$, again in contradiction with \eqref{eqn:neo_c2_2}. Therefore, the condition of \thref{thm:neo_c} does not hold and then $\gamma_{\mathsf{c},u}^k$ and $\gamma_{\mathsf{c},v}^h$ do not mutually normalize each other. For the second condition is similar.
\end{proof}

\begin{lemma} \thlabel{thm:neo_cu}
    Let $\gamma_{\mathsf{c},u}^k,\gamma_{\mathsf{c},u}^h\in\Gamma_\mathsf{c}$ be two gamma functions such that $2\le u <\ceil{\frac{n}{2}}$. Then $\gamma_{\mathsf{c},u}^k$ and $\gamma_{\mathsf{c},u}^h\in\Gamma_\mathsf{c}$ mutually normalize each other if and only if $$k\equiv h\pmod{2^{n-2u-1}}.$$
\end{lemma}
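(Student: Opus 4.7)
The plan is to apply \thref{thm:neo_c} directly in the case $v=u$, so that the two congruences of that lemma collapse into a single condition, which I will then simplify using the hypothesis $u < \lceil n/2 \rceil$.

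First, setting $v = u$ in the two equations of \thref{thm:neo_c} yields the same congruence twice, namely
\[
    2^u(2k+1) \equiv 2^u(2h+1) \pmod{2^{n-u}}.
\]
Rearranging and cancelling the odd terms, this is equivalent to
\[
    2^{u+1}(k-h) \equiv 0 \pmod{2^{n-u}}.
\]

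Next, I would verify that the exponent $n-2u-1$ in the desired conclusion is non-negative, so that the congruence $k \equiv h \pmod{2^{n-2u-1}}$ makes sense. From $u < \lceil n/2 \rceil$ we get $2u+1 \le n$ (splitting in the parity of $n$: if $n$ is even, $u \le n/2 - 1$ gives $2u+1 \le n-1$; if $n$ is odd, $u \le (n-1)/2$ gives $2u+1 \le n$). In particular $n-u \ge u+1$, so we may divide both sides of the previous congruence by $2^{u+1}$ to obtain
\[
    k - h \equiv 0 \pmod{2^{n-u-(u+1)}} = 2^{n-2u-1},
\]
which is precisely the claimed condition. Conversely, if $k \equiv h \pmod{2^{n-2u-1}}$, multiplying by $2^{u+1}$ gives back the congruence $2^{u+1}(k-h) \equiv 0 \pmod{2^{n-u}}$, and hence the condition of \thref{thm:neo_c}.

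There is essentially no obstacle here; the only point requiring a little care is checking that the inequality $2u+1 \le n$ really does follow from $u < \lceil n/2 \rceil$ in both parities of $n$, which is what justifies dividing through by $2^{u+1}$ without losing any information modulo $2^{n-u}$.
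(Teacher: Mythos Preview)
Your argument is correct and follows essentially the same route as the paper's proof: specialize \thref{thm:neo_c} to $u=v$, rewrite the resulting congruence as $2^{u+1}(k-h)\equiv 0\pmod{2^{n-u}}$, and reduce to $k\equiv h\pmod{2^{n-2u-1}}$. Your explicit verification that $u<\ceil{n/2}$ forces $n-u\ge u+1$ (hence $n-2u-1\ge 0$) is a welcome bit of care that the paper leaves implicit.
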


\begin{proof}
    This is an easy consequence of \thref{thm:neo_c} when $u=v$, indeed $2^u(2k+1)\equiv2^u(2h+1) \pmod{2^{n-u}}$ if and only if $2^{u+1}(k-h)\equiv 0\pmod{2^{n-u}}$, and this holds if and only if $k-h\equiv 0\pmod{2^{n-2u-1}}$.
\end{proof}

\begin{proposition} \thlabel{thm:neo_cu1}
    For every $2\le u < \ceil{\frac{n}{2}}$ and every $0\le t<2^{n-2u-1}$, the family $$A_u^t=\left\{\gamma_{\mathsf{c},u}^k\in\Gamma_\mathsf{c}\, :\, k\equiv t\pmod{2^{n-2u-1}}\right\}$$
    is composed of $2^u$ gamma functions, and they mutually normalize each other. In total, there are $\frac13\left(2^{n-3}-2^{n-2\ceil{\frac{n}{2}}+1}\right)$ distinct $A_u^t$.
\end{proposition}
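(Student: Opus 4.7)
The plan is to split the argument into three steps: mutual normalization within $A_u^t$, the computation of $|A_u^t|$, and a final geometric-series count. The first step is immediate. Given any two $\gamma_{\mathsf{c},u}^k,\gamma_{\mathsf{c},u}^{k'}\in A_u^t$, we have $k\equiv t\equiv k'\pmod{2^{n-2u-1}}$ by the defining condition of $A_u^t$, so \thref{thm:neo_cu} applies directly and they mutually normalize each other.

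For the cardinality, the central ingredient is an explicit formula for $\gamma_{\mathsf{c},u}^k$. Applying \thref{thm:gamma_alpha_lemma} with $\alpha=\sigma_{2k+1}\mo$ and using that $\aut(G)$ is abelian (so the outer conjugation by $\alpha$ acts trivially), I would obtain $\gamma_{\mathsf{c},u}^k(g)=\gamma_{\mathsf{c},u}((2k+1)g)=\sigma_{2^u(2k+1)g+1}$. Two such functions coincide if and only if $2^{u+1}(k-k')g\equiv 0\pmod{2^n}$ for every $g\in G$, equivalently (taking $g=1$) $k\equiv k'\pmod{2^{n-u-1}}$. The set $\{k\in[0,2^{n-1}) : k\equiv t\pmod{2^{n-2u-1}}\}$ has cardinality $2^{n-1}/2^{n-2u-1}=2^{2u}$; since $u<\ceil{\frac{n}{2}}$ forces $n-u-1>n-2u-1\ge 0$, congruence modulo $2^{n-u-1}$ refines that modulo $2^{n-2u-1}$ and partitions these $2^{2u}$ values into exactly $2^u$ classes of equal size, whence $|A_u^t|=2^u$.

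For the total count, the distinct families are indexed by pairs $(u,t)$ with $2\le u<\ceil{\frac{n}{2}}$ and $0\le t<2^{n-2u-1}$, so their number equals $\sum_{u=2}^{\ceil{\frac{n}{2}}-1}2^{n-2u-1}$. This is a finite geometric progression with first term $2^{n-5}$, common ratio $1/4$, and $\ceil{\frac{n}{2}}-2$ terms; a routine evaluation yields $\tfrac{1}{3}\bigl(2^{n-3}-2^{n-2\ceil{\frac{n}{2}}+1}\bigr)$, which is the claimed count. I do not anticipate any serious obstacle: the only substantive computation beyond invoking \thref{thm:neo_cu} is the explicit formula for $\gamma_{\mathsf{c},u}^k$, after which everything reduces to elementary modular arithmetic and summing a finite geometric series.
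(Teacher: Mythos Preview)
Your proposal is correct and follows the same three-step structure as the paper (mutual normalization via \thref{thm:neo_cu}, cardinality of $A_u^t$, geometric-series count). The only minor difference is in the middle step: the paper invokes the already-computed conjugacy-class size $2^{n-u-1}$ from \thref{thm:c2nclasses} together with a bijection $A_u^{t_1}\to A_u^{t_2}$ to deduce $|A_u^t|=2^{n-u-1}/2^{n-2u-1}=2^u$, whereas you re-derive the coincidence criterion $\gamma_{\mathsf{c},u}^k=\gamma_{\mathsf{c},u}^{k'}\iff k\equiv k'\pmod{2^{n-u-1}}$ directly and count equivalence classes; both routes are equally valid and yield the same conclusion.
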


\begin{proof}
    Two elements of such a family mutually normalize each other because of \thref{thm:neo_cu}. Let us count the elements of $A_u^t$. Observe that, once fixed $2\le u < \ceil{\frac{n}{2}}$, two families $A_{u}^{t_1}, A_{u}^{t_2}$ have the same number of elements because every $\gamma_{\mathsf{c},u}^k$ is defined modulo $2^{n-u}$ and there is a bijection $\varphi\colon A_{u}^{t_1}\to A_{u}^{t_2}$ defined by $\gamma_{\mathsf{c},u}^k \mapsto\gamma_{\mathsf{c},u}^h$, where $k$ and $h$ are such that $k=q(2^{n-2u-1})+t_1$ and $h=q(2^{n-2u-1})+t_2$, for the same $q\in\Z$. Therefore, recalling that the conjugacy class of $\gamma_{\mathsf{c},u}^k$ has $2^{n-u-1}$ different elements, dividing by all the possible choices of $t$, we obtain that $$|A_u^t|=\frac{2^{n-u-1}}{2^{n-2u-1}}=2^u$$ for every $2\le u < \ceil{\frac{n}{2}}$ and $0\le t<2^{n-2u-1}$. Moreover, for every fixed $2\le u < \ceil{\frac{n}{2}}$ there are $2^{n-2u-1}$ distinct $A_u^t$, therefore in total there are
    $$\sum_{u=2}^{\ceil{\frac{n}{2}}-1}\sum_{t=0}^{2^{n-2u-1}-1}1 = \sum_{u=2}^{\ceil{\frac{n}{2}}-1}{2^{n-2u-1}} = \frac13\left(2^{n-3}-2^{n-2\ceil{\frac{n}{2}}+1}\right).$$
\end{proof}

In a very similar manner, we prove the following results.

\begin{lemma} \thlabel{thm:gamma_same_conj}
     For every $0\le k,h < 2^{n-1}$ we have
     \[
     \begin{cases}
         \gamma_\mathsf{p}^k=\gamma_\mathsf{p}^h \\
         \gamma_\mathsf{m}^k=\gamma_\mathsf{m}^h
     \end{cases}
     \mbox{ if and only if }\quad k\equiv h\pmod{2^{n-2}}.
     \]
\end{lemma}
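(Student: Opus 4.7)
The plan is to unravel what conjugation by $\sigma_{2k+1}\mo$ does to these particular gamma functions, and then to reduce the equality $\gamma_\mathsf{p}^k=\gamma_\mathsf{p}^h$ (and similarly for $\gamma_\mathsf{m}$) to a simple congruence. Since $G=\C_{2^n}$ has abelian automorphism group, the conjugation formula of \thref{thm:gamma_alpha_lemma} simplifies drastically: for any gamma function $\gamma$ and any $\alpha=\sigma_{2k+1}\mo\in\aut(G)$, the inner automorphism $\sigma_{2k+1}\,(-)\,\sigma_{2k+1}\mo$ is the identity on $\aut(G)$, so
\[
\gamma^k(x)\;=\;\sigma_{2k+1}\,\gamma\bigl(x^{\sigma_{2k+1}}\bigr)\,\sigma_{2k+1}\mo\;=\;\gamma\bigl((2k+1)x\bigr)\qquad\forall x\in G.
\]

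For the $\mathsf{p}$ case this gives $\gamma_\mathsf{p}^k(x)=\sigma_{2(2k+1)x+1}$, so the equality $\gamma_\mathsf{p}^k=\gamma_\mathsf{p}^h$ is equivalent to $2(2k+1)x\equiv 2(2h+1)x\pmod{2^n}$ for every $x\in G$, which in turn is equivalent to $(2k+1)x\equiv(2h+1)x\pmod{2^{n-1}}$. Taking $x=1$ forces $2k\equiv 2h\pmod{2^{n-1}}$, i.e.\ $k\equiv h\pmod{2^{n-2}}$, and conversely this congruence clearly implies the condition for all $x$.

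For the $\mathsf{m}$ case the key observation is that $2k+1$ is odd, so $(2k+1)x$ has the same parity as $x$, and hence the piecewise definition of $\gamma_\mathsf{m}$ interacts cleanly with the conjugation: one obtains $\gamma_\mathsf{m}^k(x)=\sigma_{2(2k+1)x+1}$ when $x$ is even and $\gamma_\mathsf{m}^k(x)=\sigma_{2(2k+1)x+2^{n-2}+1}$ when $x$ is odd. In both subcases the equality $\gamma_\mathsf{m}^k=\gamma_\mathsf{m}^h$ boils down to the same congruence $2(2k+1)x\equiv 2(2h+1)x\pmod{2^n}$ (the shift by $2^{n-2}$ cancels), so the same argument as for $\gamma_\mathsf{p}$ yields the condition $k\equiv h\pmod{2^{n-2}}$.

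There is essentially no obstacle here: the only thing to verify carefully is that the piecewise split in the definition of $\gamma_\mathsf{m}$ is preserved under multiplication by the odd number $2k+1$, which is immediate. The whole argument is a two-line computation in each case, once the simplified conjugation formula $\gamma^k(x)=\gamma((2k+1)x)$ has been recorded.
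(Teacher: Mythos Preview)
Your proof is correct and follows exactly the approach the paper intends: the paper omits the argument for this lemma, stating only that it is proved ``in a very similar manner'' to the preceding results, which all proceed by writing out the conjugate gamma function explicitly via \thref{thm:gamma_alpha_lemma} and reducing to a modular congruence. Your computation does precisely this, and your observation that the odd factor $2k+1$ preserves parity (so the piecewise definition of $\gamma_\mathsf{m}$ is respected) is the only nontrivial point to check.
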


\begin{proof}
    Let $\gamma$ be either $\gamma_\mathsf{p}$ or $\gamma_\mathsf{m}$. We already know that $\gamma$ is defined modulo $2^{n-1}$, then
    \[
    \begin{aligned}
        \gamma^k=\gamma^h &\iff\gamma^k(x)=\gamma^h(x)\quad\forall x\in G \\
        &\iff \gamma((2k+1)x)=\gamma((2h+1)x)\quad\forall x\in G \\
        &\iff (2k+1)x=(2h+1)x\pmod{2^{n-1}}\quad\forall x\in G \\
        &\iff 2kx=2hx\pmod{2^{n-1}}\quad\forall x\in G \\
        &\iff 2k=2h\pmod{2^{n-1}} \\
        &\iff 2(k-h)\pmod{2^{n-1}} \\
        &\iff k-h\equiv 0\pmod{2^{n-2}} \\
    \end{aligned}
    \]
    that is the conclusion.
\end{proof}

\begin{lemma} \thlabel{thm:neo_p}
    Two gamma functions $\gamma_\mathsf{p}^k,\gamma_\mathsf{p}^h\in\Gamma_\mathsf{p}$ mutually normalize each other if and only if
    \[
        k\equiv h\pmod{2^{n-3}}.
    \]
\end{lemma}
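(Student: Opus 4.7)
The plan is to mimic the proof of \thref{thm:neo_c} by translating the mutual normalization condition via \thref{thm:neo3} into a congruence in modular arithmetic, then collapsing it by the substitution $x=y=1$. Since $\gamma_\mathsf{p}$ is defined modulo $2^{n-1}$ and conjugation by an automorphism preserves the defining modulus, both $\gamma_\mathsf{p}^k$ and $\gamma_\mathsf{p}^h$ are defined modulo $2^{n-1}$, so the resulting system of two congruences will live in $\Zmod{2^{n-1}}$.

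First I would make explicit the formula for $\gamma_\mathsf{p}^k$. By \thref{thm:gamma_alpha_lemma} applied to $\alpha=\sigma_{2k+1}\mo$, we have $\gamma_\mathsf{p}^k(x)=\gamma_\mathsf{p}(x^{\sigma_{2k+1}})^{\sigma_{2k+1}\mo}$, and since $\aut(G)$ is abelian the outer conjugation acts trivially on $\aut(G)$, giving
\[
\gamma_\mathsf{p}^k(x)=\gamma_\mathsf{p}((2k+1)x)=\sigma_{2(2k+1)x+1}\qquad\forall x\in G.
\]
Substituting $\gamma=\gamma_\mathsf{p}^k$ and $\delta=\gamma_\mathsf{p}^h$ in \thref{thm:neo3}, the first congruence expands as
\[
x\equiv (2(2h+1)y+1)x+y-(2(2k+1)x+1)y\pmod{2^{n-1}},
\]
and the second one is just the $(k\leftrightarrow h)$-symmetric version; both simplify to the single condition
\[
4xy(h-k)\equiv 0\pmod{2^{n-1}}\qquad\forall x,y\in G.
\]

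Exactly as at the end of the proof of \thref{thm:neo_c}, setting $x=y=1$ shows that this condition is equivalent to the necessary requirement $4(h-k)\equiv 0\pmod{2^{n-1}}$, i.e.\ $k\equiv h\pmod{2^{n-3}}$; the converse implication is immediate because if $2^{n-3}\mid(h-k)$ then multiplying by any $xy\in\Z$ preserves the divisibility. The only (rather minor) obstacle is getting the explicit formula for $\gamma_\mathsf{p}^k$ under the conjugation convention of \thref{def:gamma_k} right; once that is in place, everything collapses to elementary $2$-adic arithmetic exactly as in \thref{thm:neo_c} and \thref{thm:neo_cu}.
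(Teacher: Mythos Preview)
Your proof is correct and follows exactly the approach the paper intends: the paper does not spell out a separate proof for this lemma but declares it is obtained ``in a very similar manner'' to \thref{thm:neo_c}, and your argument is precisely that computation specialized to $\gamma_\mathsf{p}$. The explicit formula $\gamma_\mathsf{p}^k(x)=\sigma_{2(2k+1)x+1}$ matches the convention used in the proof of \thref{thm:neo_c} (where $\gamma_{\mathsf{c},u}^k(x)=\sigma_{2^u(2k+1)x+1}$), and the collapse to $4(h-k)\equiv 0\pmod{2^{n-1}}$ via $x=y=1$ is exactly the same trick.
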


\begin{proof}
    Let $\gamma_\mathsf{p}^k,\gamma_\mathsf{p}^h\in\Gamma_\mathsf{p}$. We know that they are both defined modulo $2^{n-1}$, therefore, from \thref{thm:neo3}, they mutually normalize each other if and only if for every $x,y\in G$
    \begin{align*}
        &\begin{cases}
            x\equiv x^{\gamma_{\mathsf{p}}^h(y)}+y-y^{\gamma_{\mathsf{p}}^k(x)} \pmod{2^{n-1}} \\
            x\equiv x^{\gamma_{\mathsf{p}}^k(y)}+y-y^{\gamma_{\mathsf{p}}^h(x)} \pmod{2^{n-1}} 
        \end{cases}
        \\[1em]
        &\begin{cases}
            x\equiv x^{\s_{2(2h+1)y+1}}+y-y^{\s_{2(2k+1)x+1}} \pmod{2^{n-1}} \\
            x\equiv x^{\s_{2(2k+1)y+1}}+y-y^{\s_{2(2h+1)x+1}} \pmod{2^{n-1}} 
        \end{cases}
        \\[1em]
        &\begin{cases}
            x\equiv (2(2h+1)y+1)x+y-(2(2k+1)x+1)y \pmod{2^{n-1}} \\
            x\equiv (2(2k+1)y+1)x+y-(2(2h+1)x+1)y \pmod{2^{n-1}} 
        \end{cases}
        \\[1em]
        &\begin{cases}
            2(2k+1)xy\equiv 2(2h+1)xy \pmod{2^{n-1}} \\
            2(2k+1)xy\equiv 2(2h+1)xy \pmod{2^{n-1}} 
        \end{cases}
    \end{align*}
    Observe that these equations are equivalent if we substitute $x=y=1$, moreover one of them is redundant, thus $$2(2k+1)\equiv 2(2h+1) \pmod{2^{n-1}}\iff 4(k-h)\equiv 0\pmod{2^{n-1}}$$ and this holds if and only if $k-h\equiv 0\pmod{2^{n-3}}.$
\end{proof}

\begin{proposition} \thlabel{thm:neo_p1}
    For every $0\le k < 2^{n-1}$, the family $\{\gamma_\mathsf{p}^k,\gamma_\mathsf{p}^{k+2^{n-3}}\}\subseteq\Gamma_\mathsf{p}$ is composed of $2$ distinct gamma functions, and they mutually normalize each other. 
\end{proposition}

\begin{proof}
    Because of \thref{thm:gamma_same_conj}, we know that $\gamma_\mathsf{p}^k$ and $\gamma_\mathsf{p}^{k+2^{n-3}}$ are distinct and that they fulfill the hypothesis of \thref{thm:neo_p}, so they mutually normalize each other.
\end{proof}

\begin{lemma} \thlabel{thm:neo_m}
    Two gamma functions $\gamma_\mathsf{m}^k,\gamma_\mathsf{m}^h\in\Gamma_\mathsf{m}$ mutually normalize each other if and only if
    \[
        k\equiv h \pmod{2^{n-3}}.
    \]
\end{lemma}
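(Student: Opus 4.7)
The plan is to mirror the approach of \thref{thm:neo_c} and \thref{thm:neo_p}, applying \thref{thm:neo3} with $q=r=2^{n-1}$, since every $\gamma_\mathsf{m}^k$ is defined modulo $2^{n-1}$. Because $\aut(G)$ is abelian, the outer conjugation in \thref{thm:gamma_alpha_lemma} is trivial, so $\gamma_\mathsf{m}^k(x)=\gamma_\mathsf{m}((2k+1)x)$; as multiplication by the unit $2k+1$ preserves parity, this gives
\[
\gamma_\mathsf{m}^k(x)=\begin{cases}\sigma_{2(2k+1)x+1} & x\equiv 0\pmod 2,\\ \sigma_{2(2k+1)x+2^{n-2}+1} & x\equiv 1\pmod 2.\end{cases}
\]

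Substituting into the pair of congruences of \thref{thm:neo3} and using $x^{\sigma_a}=ax$, a routine expansion and cancellation reduce each of them to the single condition
\[
4(h-k)xy + 2^{n-2}\varepsilon(x,y)\equiv 0 \pmod{2^{n-1}}\qquad\forall x,y\in G,
\]
where $\varepsilon(x,y)$ is $0$, $x$, $-y$, or $x-y$ according as $(x,y)$ has parity (even,\,even), (even,\,odd), (odd,\,even), or (odd,\,odd); the second congruence gives the same condition with $h,k$ interchanged, which is equivalent. Taking $x=y=1$ makes $\varepsilon(1,1)=0$ and forces $4(h-k)\equiv 0\pmod{2^{n-1}}$, that is $k\equiv h\pmod{2^{n-3}}$; this establishes necessity.

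For sufficiency, once $2^{n-3}\mid h-k$ the leading term $4(h-k)xy$ is automatically a multiple of $2^{n-1}$; and in every parity case the remaining $2^{n-2}\varepsilon(x,y)$ also vanishes modulo $2^{n-1}$: in the mixed cases the even one of $x,y$ supplies the missing factor of $2$, and in the odd-odd case the difference $x-y$ is even. The only point worth care is this last case, where the two corrections $2^{n-2}x$ and $-2^{n-2}y$ do not individually lie in $2^{n-1}\Z$ and must be combined; the resolution is simply that odd minus odd is even. Beyond that no real obstacle is expected, and the argument parallels \thref{thm:neo_p} almost verbatim.
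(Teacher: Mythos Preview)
Your proposal is correct and follows essentially the same approach the paper intends: the paper omits the proof of \thref{thm:neo_m}, indicating it is obtained ``in a very similar manner'' to \thref{thm:neo_c} and \thref{thm:neo_p}, i.e.\ by applying \thref{thm:neo3} and reducing to a modular condition, which is exactly what you carry out. Your parity case analysis handling the extra $2^{n-2}$ correction term of $\gamma_\mathsf{m}$ is accurate, and the necessity via $x=y=1$ together with the sufficiency check in each parity case is complete.
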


\begin{proof}
    Let $\gamma_\mathsf{m}^k,\gamma_\mathsf{m}^h\in\Gamma_\mathsf{m}$. For the sake of simplicity, denote them by 
    \[
    \begin{aligned}
        \gamma_\mathsf{m}^k(x) &=\sigma_{2(2k+1)x+\varepsilon_x2^{n-2}+1} \\
        \gamma_\mathsf{m}^h(x) &=\sigma_{2(2h+1)x+\varepsilon_x2^{n-2}+1}
    \end{aligned}
    \]
    where 
    \[
    \varepsilon_x=\left\langle
    \begin{array}{lr}
        0 & \quad\mbox{if}\quad x\equiv 0 \pmod2\\
        1 & \quad\mbox{if}\quad x\equiv 1 \pmod2
    \end{array}
    \right.\quad\forall x\in G.
    \]  
    We know that they are both defined modulo $2^{n-1}$, therefore, from \thref{thm:neo3}, they mutually normalize each other if and only if for every $x,y\in G$
    \begin{align*}
        &\begin{cases}
            x\equiv x^{\gamma_{\mathsf{m}}^h(y)}+y-y^{\gamma_{\mathsf{m}}^k(x)} \pmod{2^{n-1}} \\
            x\equiv x^{\gamma_{\mathsf{m}}^k(y)}+y-y^{\gamma_{\mathsf{m}}^h(x)} \pmod{2^{n-1}} 
        \end{cases}
        \\[1em]
        &\begin{cases}
            x\equiv x^{\s_{2(2h+1)y+\varepsilon_y2^{n-2}+1}}+y-y^{\s_{2(2k+1)x+\varepsilon_x2^{n-2}+1}} \pmod{2^{n-1}} \\
            x\equiv x^{\s_{2(2k+1)y+\varepsilon_y2^{n-2}+1}}+y-y^{\s_{2(2h+1)x+\varepsilon_x2^{n-2}+1}} \pmod{2^{n-1}} 
        \end{cases}
        \\[1em]
        &\begin{cases}
            x\equiv (2(2h+1)y+\varepsilon_y2^{n-2}+1)x+y-(2(2k+1)x+\varepsilon_x2^{n-2}+1)y \pmod{2^{n-1}} \\
            x\equiv (2(2k+1)y+\varepsilon_y2^{n-2}+1)x+y-(2(2h+1)y+\varepsilon_x2^{n-2}+1)y \pmod{2^{n-1}} 
        \end{cases}
        \\[1em]
        &\begin{cases}
            2(2k+1)xy+\varepsilon_x2^{n-2}y\equiv 2(2h+1)xy +\varepsilon_y2^{n-2}x\pmod{2^{n-1}} \\
            2(2k+1)xy+\varepsilon_x2^{n-2}y\equiv 2(2h+1)xy+\varepsilon_y2^{n-2}x \pmod{2^{n-1}} 
        \end{cases}
    \end{align*}
    Observe that these equations are equivalent if we substitute $x=y=1$, moreover one of them is redundant, thus we have $\varepsilon_x=\varepsilon_y=1$ and  $$2(2k+1)\equiv 2(2h+1) \pmod{2^{n-1}}\iff 4(k-h)\equiv 0\pmod{2^{n-1}}$$ and this holds if and only if $k-h\equiv 0\pmod{2^{n-3}}$.
\end{proof}

\begin{proposition} \thlabel{thm:neo_m1}
    For every $0\le k < 2^{n-1}$, the family $\{\gamma_\mathsf{m}^k,\gamma_\mathsf{m}^{k+2^{n-3}}\}\subseteq\Gamma_\mathsf{m}$ is composed of $2$ distinct gamma functions, and they mutually normalize each other. 
\end{proposition}

\begin{proof}
    Because of \thref{thm:gamma_same_conj}, we know that $\gamma_\mathsf{m}^k$ and $\gamma_\mathsf{m}^{k+2^{n-3}}$ are distinct and that they fulfill the hypothesis of \thref{thm:neo_m}, so they mutually normalize each other.
\end{proof}

\begin{lemma} \thlabel{thm:neo_pm}
    Two gamma functions $\gamma_\mathsf{p}^k\in\Gamma_\mathsf{p}$ and $\gamma_\mathsf{m}^h\in\Gamma_\mathsf{m}$ mutually normalize each other if and only if
    \[
        k-h\equiv 2^{n-4}\pmod{2^{n-3}}.
    \]
\end{lemma}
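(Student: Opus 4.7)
The plan is to reduce mutual normalization to modular arithmetic via \thref{thm:neo3} with $q = r = 2^{n-1}$, since both $\gamma_\mathsf{p}$ and $\gamma_\mathsf{m}$, and therefore all their conjugates, are defined modulo $2^{n-1}$. First I would make the conjugates explicit. Because $\aut(G)$ is abelian, \thref{thm:gamma_alpha_lemma} simplifies to $\gamma^k(g) = \gamma((2k+1)g)$, and since multiplication by the odd number $2k+1$ preserves the parity of $g$, one obtains
\[
\gamma_\mathsf{p}^k(g) = \sigma_{2(2k+1)g + 1}, \qquad \gamma_\mathsf{m}^h(g) = \sigma_{2(2h+1)g \+ +\+ \epsilon(g)\cdot 2^{n-2} + 1},
\]
where $\epsilon(g) \in \{0,1\}$ records the parity of $g$.

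Substituting into the two congruences of \thref{thm:neo3} and cancelling the linear terms in $x$ and $y$, both equations collapse to a statement of the shape $4xy(h - k) + \epsilon(y) \cdot 2^{n-2} x \equiv 0 \pmod{2^{n-1}}$ and its symmetric counterpart, required to hold for every $x,y \in G$. Specializing $x = y = 1$ forces $4(h - k) \equiv -2^{n-2} \equiv 2^{n-2} \pmod{2^{n-1}}$, which is equivalent to $k - h \equiv 2^{n-4} \pmod{2^{n-3}}$; this yields necessity. For sufficiency, plugging this condition back in gives $4xy(h-k) \equiv 2^{n-2} xy \pmod{2^{n-1}}$, so the residue becomes $2^{n-2} x (y + \epsilon(y))$, which vanishes modulo $2^{n-1}$ because $y + \epsilon(y)$ is always even; the symmetric equation is handled identically with the roles of $x$ and $y$ swapped.

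The one non-obvious point, which I expect to be the main obstacle in a careful write-up, is the role of the parity-dependent summand $2^{n-2}$ appearing in $\gamma_\mathsf{m}$: it is precisely this term that prevents the naive answer $k \equiv h$ and forces the specific offset $2^{n-4}$, pairing each $\gamma_\mathsf{p}^k$ with the two shifted members of $\Gamma_\mathsf{m}$ determined by that residue class. Everything else reduces to a direct expansion in modular arithmetic, entirely analogous in spirit to the proofs of \thref{thm:neo_c}, \thref{thm:neo_p} and \thref{thm:neo_m}.
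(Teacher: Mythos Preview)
Your proposal is correct and follows exactly the approach the paper indicates: the paper omits an explicit proof, stating only that \thref{thm:neo_pm} is proved ``in a very similar manner'' to \thref{thm:neo_c}, \thref{thm:neo_p} and \thref{thm:neo_m}, i.e., by applying \thref{thm:neo3} and expanding the two congruences directly. Your write-up does precisely this, and your handling of the parity term $\epsilon(y)2^{n-2}$ --- in particular the observation that $y+\epsilon(y)$ is always even, which disposes of the sufficiency direction --- is the key computation the paper leaves implicit.
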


\begin{proof}
    Let $\gamma_\mathsf{p}^k\in\Gamma_\mathsf{p}$ and $\gamma_\mathsf{m}^h\in\Gamma_\mathsf{m}$. For the sake of simplicity, denote by 
    \[
    \begin{aligned}
        \gamma_\mathsf{m}^h(x) &=\sigma_{2(2h+1)x+\varepsilon_x2^{n-2}+1}
    \end{aligned}
    \]
    where 
    \[
    \varepsilon_x=\left\langle
    \begin{array}{lr}
        0 & \quad\mbox{if}\quad x\equiv 0 \pmod2\\
        1 & \quad\mbox{if}\quad x\equiv 1 \pmod2
    \end{array}
    \right.\quad\forall x\in G.
    \]  
    We know that they are both defined modulo $2^{n-1}$, therefore, from \thref{thm:neo3}, they mutually normalize each other if and only if for every $x,y\in G$
    \begin{align*}
        &\begin{cases}
            x\equiv x^{\gamma_{\mathsf{m}}^h(y)}+y-y^{\gamma_{\mathsf{p}}^k(x)} \pmod{2^{n-1}} \\
            x\equiv x^{\gamma_{\mathsf{p}}^k(y)}+y-y^{\gamma_{\mathsf{m}}^h(x)} \pmod{2^{n-1}} 
        \end{cases}
        \\[1em]
        &\begin{cases}
            x\equiv x^{\s_{2(2h+1)y+\varepsilon_y2^{n-2}+1}}+y-y^{\s_{2(2k+1)x+1}} \pmod{2^{n-1}} \\
            x\equiv x^{\s_{2(2k+1)y+1}}+y-y^{\s_{2(2h+1)x+\varepsilon_x2^{n-2}+1}} \pmod{2^{n-1}} 
        \end{cases}
        \\[1em]
        &\begin{cases}
            x\equiv (2(2h+1)y+\varepsilon_y2^{n-2}+1)x+y-(2(2k+1)x+1)y \pmod{2^{n-1}} \\
            x\equiv (2(2k+1)x+1)x+y-(2(2h+1)x+\varepsilon_x2^{n-2}+1)y \pmod{2^{n-1}} 
        \end{cases}
        \\[1em]
        &\begin{cases}
            2(2h+1)xy+\varepsilon_x2^{n-2}y\equiv 2(2k+1)xy \pmod{2^{n-1}} \\
            2(2k+1)xy\equiv 2(2h+1)xy+\varepsilon_y2^{n-2}x \pmod{2^{n-1}}
        \end{cases}
    \end{align*}
    Observe that these equations are equivalent if we substitute $x=y=1$, thus we have $\varepsilon_x=\varepsilon_y=1$ and 
    \begin{align*}
        &\begin{cases}
            2(2h+1)+2^{n-2}\equiv 2(2k+1) \pmod{2^{n-1}} \\
            2(2k+1)\equiv 2(2h+1)+2^{n-2} \pmod{2^{n-1}} 
        \end{cases}
        \\[1em]
        &\begin{cases}
            4(h-k)\equiv -2^{n-2} \pmod{2^{n-1}} \\
            4(k-h)\equiv 2^{n-2} \pmod{2^{n-1}} \\
        \end{cases}
    \end{align*}
    Observe that one equation is redundant, and other holds if and only if for some $t\in\Z$ $$4(k-h)-2^{n-2}=t\cdot 2^{n-1}\iff k-h-2^{n-4}=t\cdot 2^{n-3}$$ that is, if and only if $k-h\equiv 2^{n-4}\pmod{2^{n-3}}$.
\end{proof}

\begin{proposition} \thlabel{thm:neo_pm1}
    For every $0\le k < 2^{n-1}$, the family
    \[
        S_k=\left\{\gamma_\mathsf{p}^{k},\gamma_\mathsf{m}^{k+2^{n-4}},\gamma_\mathsf{p}^{k+2^{n-3}}, \gamma_\mathsf{m}^{k+2^{n-3}+2^{n-4}}\right\}\subseteq \Gamma_{\mathsf{p}}\cup\Gamma_{\mathsf{m}}
     \]
     is composed of $4$ distinct gamma functions, and they mutually normalize each other. In total, there are $2^{n-3}$ distinct $S_k$.
\end{proposition}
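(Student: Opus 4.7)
The plan is to verify the three parts of the claim in turn: mutual normalization among the four gamma functions of $S_k$, their pairwise distinctness, and the count of distinct families $S_k$.

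First I would address mutual normalization. There are six unordered pairs in $S_k$. The same-type pairs $\{\gamma_\mathsf{p}^k,\gamma_\mathsf{p}^{k+2^{n-3}}\}$ and $\{\gamma_\mathsf{m}^{k+2^{n-4}},\gamma_\mathsf{m}^{k+2^{n-3}+2^{n-4}}\}$ mutually normalize each other by \thref{thm:neo_p1} and \thref{thm:neo_m1}, respectively. For each of the four cross-type pairs, \thref{thm:neo_pm} requires the index difference to be congruent to $2^{n-4}$ modulo $2^{n-3}$. A short computation using the identity $-2^{n-4}\equiv 2^{n-4}\pmod{2^{n-3}}$ handles all four cases simultaneously, so mutual normalization is immediate.

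Next I would show distinctness. The two $\gamma_\mathsf{p}$ indices in $S_k$ differ by $2^{n-3}$, which is nonzero modulo $2^{n-2}$; by \thref{thm:gamma_same_conj} they therefore yield distinct gamma functions. The same reasoning works for the two $\gamma_\mathsf{m}$ indices. Cross-distinctness between any $\gamma_\mathsf{p}^a$ and any $\gamma_\mathsf{m}^b$ follows because conjugation by automorphisms preserves the isomorphism type of the associated regular subgroup (\thref{thm:gamma_alpha_lemma}), and by \thref{thm:c2ngpmc} the two types $\C_2\times\C_{2^{n-1}}$ and $\M_{2^n}$ are not isomorphic.

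Finally I would count the distinct $S_k$. By \thref{thm:gamma_same_conj}, translating $k$ by $2^{n-3}$ swaps the two $\gamma_\mathsf{p}$ indices modulo $2^{n-2}$, so the resulting pair of $\gamma_\mathsf{p}$'s is unchanged as a set, and analogously for the two $\gamma_\mathsf{m}$ indices. Hence $S_{k+2^{n-3}}=S_k$, so $S_k$ depends only on $k\pmod{2^{n-3}}$. Conversely, if $S_{k_1}=S_{k_2}$ with $0\le k_1,k_2<2^{n-3}$, then $\gamma_\mathsf{p}^{k_1}$ must equal $\gamma_\mathsf{p}^{k_2}$ or $\gamma_\mathsf{p}^{k_2+2^{n-3}}$; the size bound together with \thref{thm:gamma_same_conj} forces $k_1=k_2$. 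Thus there are exactly $2^{n-3}$ distinct families $S_k$. The proof has no major obstacle, since every step reduces to a previously established lemma; the only point requiring a bit of care is verifying that $k\mapsto k+2^{n-3}$ genuinely permutes $S_k$ rather than altering it, which underpins the counting step and relies on the joint behaviour of the $\mathsf{p}$ and $\mathsf{m}$ indices modulo $2^{n-2}$.
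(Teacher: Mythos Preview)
Your proposal is correct and follows precisely the approach the paper intends: the paper omits the proof entirely, listing this proposition among results proved ``in a very similar manner'' from the preceding lemmas, and your argument supplies exactly those omitted details by invoking \thref{thm:neo_p1}, \thref{thm:neo_m1}, \thref{thm:neo_pm}, and \thref{thm:gamma_same_conj} in the expected way. The verification of mutual normalization, distinctness, and the count are all sound; note only that the statement tacitly assumes $n\ge 4$ (so that $2^{n-4}$ makes sense and $\C_2\times\C_{2^{n-1}}\not\cong\M_{2^n}$), which is consistent with the paper's running hypotheses.
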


\begin{proof}
    Fix $0\le k < 2^{n-1}$. The elements of $S_k$ mutually normalize each other because of \thref{thm:neo_p}, \thref{thm:neo_m}, and \thref{thm:neo_pm}. Moreover, all the elements of $S_k$ are distinct thanks to \thref{thm:gamma_same_conj}. Again, from \thref{thm:gamma_same_conj}, we notice that $S_k=S_h$ if and only if $k\equiv h\pmod{2^{n-3}}$, namely, there are only $2^{n-3}$ choices of the parameter $k$ producing mutually different conjugates, so that there are only $2^{n-3}$ distinct families of the form of $S_k$.
\end{proof}

\begin{proposition} \thlabel{thm:neo_n}
    There are no other mutual normalizations except the ones highlighted above.
\end{proposition}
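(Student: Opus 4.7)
My plan is to use the characterization thm:neo3 as the master criterion. On $G=\C_{2^n}$, writing $\gamma(x)=\sigma_{f(x)}$ and $\delta(x)=\sigma_{g(x)}$ with respective moduli $q,r$, the mutual normalization of $\gamma$ and $\delta$ is equivalent to
\begin{align*}
    (g(y)-1)\+x &\equiv (f(x)-1)\+y \pmod{q}, \\
    (f(y)-1)\+x &\equiv (g(x)-1)\+y \pmod{r},
\end{align*}
for all $x,y \in G$. Propositions thm:c2n!1234 through thm:c2n!pm give the complete list of gamma functions, and propositions thm:neo_123456, thm:neo_c1, thm:neo_cu1, thm:neo_p1, thm:neo_m1, and thm:neo_pm1 already exhibit the edges claimed in the graph. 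The task thus reduces to showing that every pair of gamma functions not covered by these propositions fails the criterion above; since in each case the condition is a short pair of congruences, this is settled by choosing a well-adapted substitution $(x,y)$.

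The remaining pairs organize naturally by family. For $\gamma_i$ with $i\in\{1,2\}$ paired with $\gamma_5$ or $\gamma_6$, the substitution $x=y=1$ in the mod-$4$ equation yields $0\equiv g(1)-1\pmod{4}$; but $g(1)-1\in\{2^{n-1}-2,2^{n-1}\}$ reduces to $\pm 2\pmod 4$, a contradiction. For any $\gamma_i$ ($1\le i\le 6$) paired with a member of $\Gamma_\mathsf{p}\cup\Gamma_\mathsf{m}$, the large modulus $2^{n-1}$ forces $f(y)-1$ to realize too many distinct residues in $y$; since $f$ for $\gamma_i$ takes only two or four values, taking $y=3,x=1$ in the $2^{n-1}$-side equation yields a contradiction. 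For a cross pair $\gamma_{\mathsf{c},u}^k$ and $\gamma_\mathsf{p}^h$, writing $f(x)-1=2^u(2k+1)x$ and $g(x)-1=2(2h+1)x$ and substituting $x=y=1$ in the $r=2^{n-1}$ congruence gives $2^u(2k+1)\equiv 2(2h+1)\pmod{2^{n-1}}$; comparing $2$-adic valuations forces $u=1$, contradicting $u\ge 2$. The same argument handles $\gamma_\mathsf{m}^h$ with a trivial correction from its piecewise form.

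The within-family cases $\Gamma_\mathsf{c}\leftrightarrow\Gamma_\mathsf{c}$, $\Gamma_\mathsf{p}\leftrightarrow\Gamma_\mathsf{p}$, $\Gamma_\mathsf{m}\leftrightarrow\Gamma_\mathsf{m}$, and $\Gamma_\mathsf{p}\leftrightarrow\Gamma_\mathsf{m}$ with parameters not in the established classes are immediately ruled out by the \emph{exact} characterizations provided by lemmas thm:neo_c, thm:neo_cu, thm:neo_p, thm:neo_m, thm:neo_pm and proposition thm:neo_c2, each of which gives a necessary and sufficient condition. The main obstacle is therefore not conceptual but organizational: the bookkeeping of the many cross-family configurations, each dispatched by a short modular-arithmetic computation once a suitable pair $(x,y)$ is chosen. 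The trickiest sub-case is the cross $\Gamma_\mathsf{c}\leftrightarrow\Gamma_\mathsf{p}\cup\Gamma_\mathsf{m}$, where the mismatch between the $2$-adic valuation $u$ of the linear slope of $\gamma_{\mathsf{c},u}^k$ and the fixed slope $2$ of the maps in $\Gamma_\mathsf{p}\cup\Gamma_\mathsf{m}$ is the key source of obstruction.
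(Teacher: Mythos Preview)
The paper states this proposition without proof, so your case-by-case verification via \thref{thm:neo3} is exactly the intended route, and you are right that the within-family exclusions are already settled by the if-and-only-if characterizations in \thref{thm:neo_c}, \thref{thm:neo_cu}, \thref{thm:neo_p}, \thref{thm:neo_m}, \thref{thm:neo_pm} together with \thref{thm:neo_c2}.

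There is, however, a genuine omission in your case list: you never treat the cross pairs $\gamma_i$ for $i\in\{3,4,5,6\}$ against a general $\gamma_{\mathsf{c},u}^k\in\Gamma_{\mathsf{c}}$ with $2\le u\le n-2$. Recall that $\gamma_1=\gamma_{\mathsf{c},n}$ and $\gamma_2=\gamma_{\mathsf{c},n-1}$ lie in $\Gamma_{\mathsf{c}}$, but $\gamma_3,\gamma_4,\gamma_5,\gamma_6$ do not, so these pairs are neither ``within-family'' nor covered by your three explicit cross-family paragraphs. The graph in \thref{thm:c2ngraph} has no edge between $\{N_3,N_4,N_5,N_6\}$ and $H\setminus\{N_1,N_2\}$ or any $A_u^t$, and this must be checked. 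The argument is short and uses the same $2$-adic obstruction as your $\Gamma_{\mathsf{c}}\leftrightarrow\Gamma_{\mathsf{p}}$ case: in the mod-$2^{n-u}$ equation with $x=y=1$ one needs $f(1)-1\equiv 2^u(2k+1)\pmod{2^{n-u}}$, but $f(1)-1\in\{2^{n-1}-2,\,2^n-2\}$ has $2$-adic valuation exactly $1$ while the right side has valuation $u\ge 2$, and $n-u\ge 2$ gives the contradiction.

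Two smaller points on the cases you did write. For $\gamma_6$ one has $g(1)-1=2^n-2$, not $2^{n-1}$ as you state, though the residue modulo~$4$ and hence the conclusion are unchanged. And for $\gamma_i$ with $i\ge 3$ against $\Gamma_{\mathsf{p}}\cup\Gamma_{\mathsf{m}}$, the single substitution $(x,y)=(1,3)$ does not by itself contradict the mod-$2^{n-1}$ equation for every $h$; the uniform choice is $(x,y)=(1,2)$, since $f(2)-1\equiv 0\pmod{2^{n-1}}$ for all six $\gamma_i$, forcing $2(g(1)-1)\equiv 0\pmod{2^{n-1}}$, which fails for $n\ge 4$.
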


\begin{proof}
    In the previous propositions, we have taken into account all possible cases, therefore there are no other possibilities except the ones studied so far.
\end{proof}

\begin{theorem}[Local normalizing graph of $\C_{2^n}$] \thlabel{thm:c2ngraph}
    In the notation of the previous propositions, the local normalizing graph of $\C_{2^n}$ for $n\ge 3$ contains the following connected components.
    \begin{enumerate}
        \item The connected component containing the clique $H$ and the subgroups $N_1,\dots,N_6$, composed by $2^{n-\ceil{\frac{n}{2}}}+4$ regular subgroups.
        \item $\frac13\left(2^{n-3}-2^{n-2\ceil{\frac{n}{2}}+1}\right)$ connected components $A_u^t$, each of which is a clique composed by $2^u$ regular subgroups, for $2\le u < \ceil{\frac{n}{2}}$ and $0\le t<2^{n-2u-1}$.
        \item $2^{n-3}$ connected components $S_k$, each of which is a clique composed by $4$ regular subgroups, for $0\le k < 2^{n-3}$.
    \end{enumerate}
    Thus, for some indices $k_1,k_2,k_3,\dots$ and $t_1,t_2,t_3,\dots$ representing conjugation under automorphisms as in the previous statements, the local normalizing graph of $\C_{2^n}$ for $n\ge 3$ has the following form.

    \begin{center}
        \begin{tikzpicture}[
            minimum size=\nodeSize,
            node distance=\nodeDist,
            thick, 
            main/.style = {draw=black, fill=white,  circle},
            ] 
                         
            \node[main] (6) {$N_6$}; 
            \node[main] (5) [above of=6] {$N_5$}; 
            \node[main] (4) [right of=6] {$N_4$}; 
            \node[main] (3) [above of=4] {$N_3$}; 
            \node[main] (2) [right of=4, label={[label distance=2.20cm, fill=white, draw=black]10:$|H|=2^{n-\ceil{\frac{n}{2}}}$}] {$N_2$}; 
            \node[main] (1) [above of=2] {$N_1$};
            
            \node[main] (7) [position=60:{0.5\nodeDist} from 1] {$N_\mathsf{c}^{k_1}$}; 
            \node[main] (8) [position=-60:{0.5\nodeDist} from 2] {$N_\mathsf{c}^{k_2}$}; 
            \node[main] (9) [position=30:{0.5\nodeDist} from 7] {$N_\mathsf{c}^{k_3}$}; 
            \node[main] (10) [position=-30:{0.5\nodeDist} from 8] {$N_\mathsf{c}^{k_4}$}; 
            \node[main] (11) [right of=9] {$\dots$}; 
            \node[main] (12) [right of=10] {$\dots$}; 
            \node[main] (13) [position=-30:{0.5\nodeDist} from 11] {$N_\mathsf{c}^{k_5}$}; 
            \node[main] (14) [position=30:{0.5\nodeDist} from 12] {$N_\mathsf{c}^{k_6}$};
            \node[main] (15) [position=-60:{0.5\nodeDist} from 13] {$N_\mathsf{c}^{k_7}$}; 
            \node[main] (16) [position=60:{0.5\nodeDist} from 14] {$N_\mathsf{c}^{k_8}$};

            \begin{scope}[on background layer]
                \draw (6) -- (5); 
                \draw (6) -- (4); 
                \draw (6) -- (3); 
                \draw (5) -- (4); 
                \draw (5) -- (3);  
                \draw (4) -- (3); 
                \draw (4) -- (2); 
                \draw (4) -- (1); 
                \draw (3) -- (2); 
                \draw (3) -- (1); 
                \draw (2) -- (1); 
                
                \draw (1) -- (7); 
                \draw (1) -- (8); 
                \draw (1) -- (9); 
                \draw (1) -- (10); 
                \draw (1) -- (11); 
                \draw (1) -- (12); 
                \draw (1) -- (13); 
                \draw (1) -- (14); 
                \draw (1) -- (15); 
                \draw (1) -- (16); 
                
                \draw (2) -- (7);
                \draw (2) -- (8); 
                \draw (2) -- (9); 
                \draw (2) -- (10); 
                \draw (2) -- (11); 
                \draw (2) -- (12); 
                \draw (2) -- (13); 
                \draw (2) -- (14); 
                \draw (2) -- (15); 
                \draw (2) -- (16); 
                
                \draw (7) -- (8);
                \draw (7) -- (9); 
                \draw (7) -- (10); 
                \draw (7) -- (11); 
                \draw (7) -- (12); 
                \draw (7) -- (13); 
                \draw (7) -- (14); 
                \draw (7) -- (15); 
                \draw (7) -- (16); 
                
                \draw (8) -- (9); 
                \draw (8) -- (10); 
                \draw (8) -- (11); 
                \draw (8) -- (12); 
                \draw (8) -- (13); 
                \draw (8) -- (14); 
                \draw (8) -- (15); 
                \draw (8) -- (16); 
                
                \draw (9) -- (10); 
                \draw (9) -- (11); 
                \draw (9) -- (12); 
                \draw (9) -- (13); 
                \draw (9) -- (14); 
                \draw (9) -- (15); 
                \draw (9) -- (16); 
                
                \draw (10) -- (11); 
                \draw (10) -- (12); 
                \draw (10) -- (13); 
                \draw (10) -- (14); 
                \draw (10) -- (15); 
                \draw (10) -- (16); 
                
                \draw (11) -- (12); 
                \draw (11) -- (13); 
                \draw (11) -- (14); 
                \draw (11) -- (15); 
                \draw (11) -- (16); 
                
                \draw (12) -- (13); 
                \draw (12) -- (14); 
                \draw (12) -- (15); 
                \draw (12) -- (16); 
                
                \draw (13) -- (14); 
                \draw (13) -- (15); 
                \draw (13) -- (16); 
                
                \draw (14) -- (15); 
                \draw (14) -- (16); 
                
                \draw (15) -- (16); 
            \end{scope}
        \end{tikzpicture}
        \\[5mm]
        \begin{tikzpicture}[
            minimum size=\nodeSize,
            node distance=30mm,
            thick, 
            main/.style = {draw=black, fill=white,  circle},
            ]              
     
            \node[main] (1) [minimum size=17mm, label={[label distance=1.5mm, fill=white, draw=black]-52:$|S_k|=4$}] {$N_\mathsf{p}^k$}; 
            \node[main] (2) [right of=1] {$N_\mathsf{m}^{k+2^{n-4}}$};     
            \node[main] (3) [below of=1] {$N_\mathsf{m}^{k-2^{n-4}}$}; 
            \node[main] (4) [below of=2] {$N_\mathsf{p}^{k+2^{n-3}}$}; 
            
            \begin{scope}[on background layer]
                \draw (1) -- (2); 
                \draw (1) -- (3); 
                \draw (1) -- (4); 
                \draw (2) -- (3); 
                \draw (2) -- (4);  
                \draw (3) -- (4); 
            \end{scope}
        \end{tikzpicture} 
        \qquad\qquad\qquad\qquad\qquad\qquad
        \begin{tikzpicture}[
            minimum size=\nodeSize,
            node distance=30mm,
            thick, 
            main/.style = {draw=black, fill=white,  circle},
            ]              
    
            \node[main] (1) [label={[label distance=4.25mm, fill=white, draw=black, ]-62.5:$\left|A_{2}^{t_1}\right|=2^2$}]{$N_\mathsf{c}^{k_9}$}; 
            \node[main] (2) [right of=1] {$N_\mathsf{c}^{k_{10}}$};     
            \node[main] (3) [below of=1] {$N_\mathsf{c}^{k_{11}}$}; 
            \node[main] (4) [below of=2] {$N_\mathsf{c}^{k_{12}}$}; 
            
            \begin{scope}[on background layer]
                \draw (1) -- (2); 
                \draw (1) -- (3); 
                \draw (1) -- (4); 
                \draw (2) -- (3); 
                \draw (2) -- (4);  
                \draw (3) -- (4); 
            \end{scope}
        \end{tikzpicture}
        \\[5mm]
        \begin{tikzpicture}[
            minimum size=\nodeSize,
            node distance=\nodeDist,
            thick, 
            main/.style = {draw=black, fill=white,  circle},
            ] 
                         
            \node[main] (1) [label={[label distance=1.25cm, fill=white, draw=black]0:$\left|A_{3}^{t_2}\right|=2^3$}] {$N_\mathsf{c}^{k_{13}}$}; 
            \node[main] (2) [position=60:{0.5\nodeDist} from 1] {$N_\mathsf{c}^{k_{14}}$}; 
            \node[main] (3) [position=-60:{0.5\nodeDist} from 1] {$N_\mathsf{c}^{k_{15}}$}; 
            \node[main] (4) [position=30:{0.5\nodeDist} from 2] {$N_\mathsf{c}^{k_{16}}$}; 
            \node[main] (5) [position=-30:{0.5\nodeDist} from 3] {$N_\mathsf{c}^{k_{17}}$}; 
            \node[main] (6) [position=-30:{0.5\nodeDist} from 4] {$N_\mathsf{c}^{k_{18}}$};
            \node[main] (7) [position=30:{0.5\nodeDist} from 5] {$N_\mathsf{c}^{k_{19}}$};
            \node[main] (8) [position=-60:{0.5\nodeDist} from 6, label={[label distance=10mm]0:$\dots$}] {$N_\mathsf{c}^{k_{20}}$};
            
            \begin{scope}[on background layer]
                \draw (1) -- (2); 
                \draw (1) -- (3); 
                \draw (1) -- (4); 
                \draw (1) -- (5); 
                \draw (1) -- (6); 
                \draw (1) -- (7);
                \draw (1) -- (8);
                
                \draw (2) -- (3); 
                \draw (2) -- (4); 
                \draw (2) -- (5); 
                \draw (2) -- (6); 
                \draw (2) -- (7);
                \draw (2) -- (8);
                
                \draw (3) -- (4); 
                \draw (3) -- (5); 
                \draw (3) -- (6); 
                \draw (3) -- (7);
                \draw (3) -- (8);
                
                \draw (4) -- (5); 
                \draw (4) -- (6); 
                \draw (4) -- (7);
                \draw (4) -- (8);
                
                \draw (5) -- (6); 
                \draw (5) -- (7);
                \draw (5) -- (8);
                
                \draw (6) -- (7);
                \draw (6) -- (8);
                
                \draw (7) -- (8);
            \end{scope}
        \end{tikzpicture}   
    \end{center}
\end{theorem}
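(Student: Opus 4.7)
The plan is to assemble the graph by combining the existence, uniqueness, and mutual-normalization results established throughout this section. Once the vertex set has been fixed by the classification and counting propositions, every edge is either produced explicitly by the preceding results or excluded by \thref{thm:neo_n}.

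First, I pin down the vertex set. By \thref{thm:classification_regular_2}, any regular subgroup of $\hol(G)$ belongs to one of six isomorphism types, and the uniqueness propositions of the previous subsection give the exact counts: $2^{n-2}$ of type $\C_{2^n}$ (namely $N_1 = N_{\mathsf{c},n}$, $N_2 = N_{\mathsf{c},n-1}$, and the conjugates $N_{\mathsf{c},u}^k$ for $2 \le u \le n-2$), one of type $\Q_{2^n}$ ($N_3$), one of type $\D_{2^n}$ ($N_4$), two of type $\SD_{2^n}$ ($N_5, N_6$), and $2^{n-2}$ each of types $\C_2 \times \C_{2^{n-1}}$ ($N_\mathsf{p}^k$) and $\M_{2^n}$ ($N_\mathsf{m}^k$). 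Summing reproduces the lower bound $3 \cdot 2^{n-2} + 4$ of \thref{thm:c2nclassescor}, so these are precisely the vertices depicted in the diagram.

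Second, I assemble the edges by invoking the appropriate proposition for each cluster. The two $K_4$'s sharing the edge $\{N_3, N_4\}$, namely $\{N_1,N_2,N_3,N_4\}$ and $\{N_3,N_4,N_5,N_6\}$ together with the edge $\{N_5,N_6\}$, are exactly \thref{thm:neo_123456}. The top clique $H$ of size $2^{n - \lceil n/2 \rceil}$, which contains $N_1, N_2$ and the $N_{\mathsf{c},u}^k$ with $u \geq \lceil n/2 \rceil$, is \thref{thm:neo_c1}. The $A_u^t$ cliques of size $2^u$ for $2 \leq u < \lceil n/2 \rceil$ and each residue $0 \leq t < 2^{n-2u-1}$ are \thref{thm:neo_cu1}, and the $S_k$ quadrangles of size $4$, each consisting of two $\C_2 \times \C_{2^{n-1}}$- and two $\M_{2^n}$-type vertices, are \thref{thm:neo_pm1}.

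The last step is to rule out every other potential edge, which is \thref{thm:neo_n}. Its proof, not explicitly written above, would proceed by applying \thref{thm:neo3} to the finite list of gamma-function pairs not already handled (e.g.\ $\gamma_{\mathsf{c},u}^k$ against $\gamma_\mathsf{p}^h$ or $\gamma_5$, and all analogous mixed pairs across the families), reducing mutual normalization to a pair of congruences modulo a power of $2$ and exhibiting specific $x, y \in G$ for which at least one congruence fails. The main obstacle is therefore bookkeeping rather than conceptual difficulty: the closed-form descriptions of the gamma functions and the cyclic-group version of \thref{thm:neo} make each individual check a short modular-arithmetic computation, but there are many pairs to dispatch before the diagram can be declared complete.
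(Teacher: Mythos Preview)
Your proposal is correct and matches the paper's approach: the theorem is stated in the paper as the culmination of the preceding propositions without a separate proof, and you have assembled exactly those ingredients (vertex classification via \thref{thm:classification_regular_2} and the uniqueness propositions, edges via \thref{thm:neo_123456}, \thref{thm:neo_c1}, \thref{thm:neo_cu1}, \thref{thm:neo_pm1}, and exclusion of all others via \thref{thm:neo_n}). Your remark that \thref{thm:neo_n} is left unproved and reduces to a finite bookkeeping of congruence checks via \thref{thm:neo3} accurately reflects the paper's treatment as well.
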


\section{The classification in the case \texorpdfstring{$p$}{p} is odd} \label{sec:podd}

In this section, we generalize the previous results when $p$ is an odd prime. Unlike the case $p=2$, the odd case is more straightforward, and we can deal with it without any distinctions among small cases, general cases and the exponent of $p$. Most of the proof will be very similar to the case $p=2$, in some cases they are obtained just by substituting the symbol $2$ with $p$. We report only the most substantial difference of them, despite the general approach is the same as above. We denote by $p$ an odd prime and by $G=\C_{p^n}$ a cyclic group of order $p^n$ written in additive notation, where $n\ge 1$, unless otherwise stated. 

\subsection{Existence problem}

The proofs of the results in this subsection are substantially identical to those of the case $p=2$. 

\begin{proposition} \thlabel{thm:cpng1}
    The map $\gamma_n\colon G \to\aut(G)$ defined by
    \[
    \gamma_n(x)=\sigma_1\qquad\forall x\in G
    \]    
    is a gamma function on $G$, and the associated regular subgroup $N_n\le\hol(G)$ is isomorphic to $\C_{p^n}$. Moreover, $N_n$ is normal in $\hol(G)$.
\end{proposition}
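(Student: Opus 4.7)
The plan is to mirror the argument for \thref{thm:c2g1}, exploiting the fact that $\g_n$ is the constant function with value $\id$. The first step is to verify, via \thref{def:gamma_function}, that $\g_n$ is a gamma function on $G$. Since $\g_n(G)=\{\id\}$, the subgroup $G$ is trivially $\g_n(G)$-invariant, and the gamma functional equation
\[
\g_n(g^{\g_n(h)}\cdot h)=\g_n(g)\g_n(h)\qquad\forall g,h\in G
\]
collapses to $\id=\id\cdot\id$, which is clearly satisfied.

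Next, I would identify the associated regular subgroup $N_n$ by applying \thref{thm:srg1}(a): each element of $N_n$ is of the form $\nu(g)=\g_n(g)\rho(g)=\rho(g)$, and therefore $N_n=\rho(G)\cong G=\C_{p^n}$. To see that $N_n\cong\C_{p^n}$ more formally one may also invoke \thref{thm:srg1}(b), which gives $g\circ h=g^{\g_n(h)}\cdot h=g+h$, so the circle operation coincides with the additive operation on $G$, and thus $(G,\circ)\cong\C_{p^n}$.

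Finally, to show that $N_n\norm\hol(G)$, the cleanest path is to apply \thref{thm:srg3}(4): since for every $x\in G$ and every $\alpha\in\aut(G)$ we have $\g_n(x^\alpha)=\id=\id^\alpha=\g_n(x)^\alpha$, the regular subgroup $N_n$ is normalized by $\aut(G)$. Combined with the fact that $\rho(G)$ always normalizes itself, and that $\hol(G)=\langle\rho(G),\aut(G)\rangle$, we conclude $N_n\norm\hol(G)$.

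There is no real obstacle in this proof — the statement is essentially definitional once \thref{thm:srg1} and \thref{thm:srg3} are available. The only care needed is to phrase the verifications in a way that is valid for all $n\ge 1$ and to make clear that every step is insensitive to the parity of $p$, so that this serves as the base case for the odd-prime treatment in the rest of the section.
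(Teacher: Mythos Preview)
Your proposal is correct and follows essentially the same approach as the paper: the paper does not spell out a proof for this proposition, noting only that the arguments in the odd case mirror those for $p=2$, and the corresponding $p=2$ statement (\thref{thm:c2g1}) is justified precisely by invoking \thref{thm:srg1} and \thref{thm:srg3}, exactly as you do. Your explicit identification $N_n=\rho(G)$ and the verification of \thref{thm:srg3}(4) are the natural unpacking of that reference.
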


\begin{proof}
    This proof is identical to that in the case $p=2$.
\end{proof}

As above we need an arithmetic lemma to conclude the existence problem.

\begin{lemma} \thlabel{thm:puk_lemma_mod}
    For every $n,k,u\in\N$ such that $n\ge 1,\ 1\le u< n$ and $1\le k \le p^n$ we have that $$p^{-u}\left[(p^u+1)^k-1\right]\equiv 0\pmod{p^n}\iff k=p^n.$$
\end{lemma}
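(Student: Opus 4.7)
The plan is to restate the divisibility as a question about the multiplicative order of $1+p^u$ in $(\Z/p^{n+u}\Z)^\times$. Indeed $p^{-u}\bigl[(p^u+1)^k-1\bigr]\equiv 0\pmod{p^n}$ is equivalent to $(1+p^u)^k\equiv 1\pmod{p^{n+u}}$, so the claim becomes: the element $1+p^u$ has order exactly $p^n$ modulo $p^{n+u}$, and for $1\le k\le p^n$ this order divides $k$ precisely when $k=p^n$.

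To establish the order claim I would prove by induction on $s\ge 0$ the sharper congruence
\[
(1+p^u)^{p^s}\equiv 1+p^{u+s}\pmod{p^{u+s+1}}.
\]
The base case $s=0$ is trivial. For the inductive step, I would write $(1+p^u)^{p^s}=1+p^{u+s}(1+pm)$ from the inductive hypothesis, raise to the $p$-th power by the binomial theorem, and bound the $p$-adic valuations of the terms for $j\ge 2$: for $1\le j\le p-1$ the factor $\binom{p}{j}$ contributes an extra $p$, while for $j=p$ one gets $p^{p(u+s)}$. Since $p$ is odd, a short case check shows both are $\equiv 0\pmod{p^{u+s+2}}$, so only the $j=1$ term survives modulo $p^{u+s+2}$ and gives exactly $p^{u+s+1}$.

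From this inductive formula one reads off that $(1+p^u)^{p^s}\not\equiv 1\pmod{p^{n+u}}$ for any $s<n$ (since its residue is $1+p^{u+s}\not\equiv 1\pmod{p^{u+s+1}}$, hence not $\equiv 1\pmod{p^{n+u}}$), while $(1+p^u)^{p^n}\equiv 1\pmod{p^{n+u}}$. Therefore the multiplicative order of $1+p^u$ modulo $p^{n+u}$ is exactly $p^n$, and for $k$ in the range $1\le k\le p^n$ the congruence $(1+p^u)^k\equiv 1\pmod{p^{n+u}}$ forces $p^n\mid k$, i.e.\ $k=p^n$.

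The main obstacle is the inductive step, specifically the bookkeeping of valuations of the binomial tail: the proof breaks in characteristic $2$ because for $p=2$ the term $j=2$ gives $\binom{2}{2}p^{2(u+s)}=p^{2u+2s}$, which is only $\equiv 0\pmod{p^{u+s+2}}$ when $u+s\ge 2$, and this explains why the analogous statement in Section \ref{sec:p2} (\thref{thm:five_lemma_mod_3}) requires the hypothesis $u\ge 2$. Here, with $p$ odd and $u\ge 1$, the slack $p\ge 3$ and $p(u+s)\ge u+s+2$ makes the estimates go through cleanly for all $u\ge 1$.
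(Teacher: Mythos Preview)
Your argument is correct: the reduction to computing the multiplicative order of $1+p^u$ in $(\Z/p^{n+u}\Z)^\times$, together with the standard induction $(1+p^u)^{p^s}\equiv 1+p^{u+s}\pmod{p^{u+s+1}}$, establishes the lemma cleanly, and your valuation bookkeeping for the binomial tail (including the observation that $(p-1)(u+s)\ge 2$ handles the $j=p$ term precisely because $p$ is odd) is accurate. The paper itself omits this proof, treating it as a standard arithmetical fact in parallel with \thref{thm:five_lemma_mod_3}, so there is nothing to compare against; your write-up in fact supplies more than the paper does, including a correct explanation of why the analogous $p=2$ statement needs $u\ge 2$.
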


\begin{proposition} \thlabel{thm:cpngc}
    The map $\gamma_{u}\colon G \to\aut(G)$ defined by
    \[
    \gamma_{u}(x)=\sigma_{p^ux+1}\qquad\forall x\in G
    \]    
    is a gamma function on $G$ for every $u=1,\dots,n$, and the associated regular subgroup $N_{u}\le\hol(G)$ is isomorphic to $\C_{p^n}$.
\end{proposition}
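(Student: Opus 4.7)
The plan is to mirror the proof of \thref{thm:c2ngpmc}: first establish that each $\gamma_u$ is a gamma function, then identify the isomorphism class of the associated regular subgroup $N_u$ by computing the circle operation explicitly.

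I would begin by checking well-definedness. Since $p^ux+1\equiv 1\pmod{p}$, this quantity is coprime to $p^n$, so $\sigma_{p^ux+1}\in\aut(G)$. Next I would verify the gamma functional equation directly. Writing $g^{\gamma_u(h)}+h=(p^uh+1)g+h$, the left-hand side becomes
\[
\gamma_u\bigl(g^{\gamma_u(h)}+h\bigr)=\sigma_{p^u((p^uh+1)g+h)+1}=\sigma_{p^{2u}gh+p^ug+p^uh+1},
\]
while the right-hand side $\gamma_u(g)\gamma_u(h)=\sigma_{p^ug+1}\,\sigma_{p^uh+1}=\sigma_{(p^ug+1)(p^uh+1)}$ expands to the same expression. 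Hence $\gamma_u$ is a gamma function.

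For the isomorphism type, I would invoke \thref{thm:srg1} to identify $N_u$ with the circle group $(G,\circ)$ associated with $\gamma_u$, where $g\circ h=g^{\gamma_u(h)}+h$. A routine induction based on the recurrence $1^{\circ(k+1)}=(p^u+1)\cdot 1^{\circ k}+1$ yields the closed form
\[
1^{\circ k}=p^{-u}\bigl[(p^u+1)^k-1\bigr]\qquad\forall k\in\N.
\]
By \thref{thm:puk_lemma_mod}, this quantity is congruent to $0$ modulo $p^n$ precisely when $k=p^n$, so the element $1$ has $\circ$-order exactly $p^n$. Hence $(G,\circ)$ is cyclic of order $p^n$ generated by $1$, giving $N_u\cong(G,\circ)\cong\C_{p^n}$. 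The boundary case $u=n$ degenerates to the trivial gamma function $\sigma_1$ already covered by \thref{thm:cpng1}, so it requires no separate computation.

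No essential obstacle is anticipated here: the entire argument is the odd-prime analogue of the corresponding step in \thref{thm:c2ngpmc}, with the only real arithmetic content bundled into \thref{thm:puk_lemma_mod}. In particular, unlike the case $p=2$, there is no need to split off additional subfamilies (such as $\Gamma_{\mathsf{p}}$ or $\Gamma_{\mathsf{m}}$) because the automorphism group of $G$ is cyclic, and the single family $\{\gamma_u\}_{u=1}^n$ suffices to capture all cyclic regular subgroups of $\hol(G)$.
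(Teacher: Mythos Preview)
Your proposal is correct and follows essentially the same approach as the paper, which simply remarks that the proof is identical to that of \thref{thm:c2ngpmc} with the arithmetic input supplied by \thref{thm:puk_lemma_mod}. You have in fact written out more detail than the paper does: the direct verification of the gamma functional equation, the inductive formula $1^{\circ k}=p^{-u}[(p^u+1)^k-1]$, and the explicit handling of the boundary case $u=n$ all match the paper's implicit argument exactly.
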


\begin{proof}
    Fix $u\in\{1,\dots,n\}$. We may assume that $u\neq n$ because this value of $u$ have been already studied in \thref{thm:cpng1}. Denote by $\g=\g_{u}$, for the sake of simplicity, and observe that $\g$ is defined modulo $p^{n-u}$, indeed for every $x\in G$ with $0\le x<p^{n-u}$, and every $t\in\Z$
    $$\g(x+tp^{n-u})=\s_{p^u(x+tp^{n-u})+1}=\s_{p^ux+tp^n+1}=\s_{p^ux+1}=\g(x).$$
    We divide the proof in two steps.
    
    \begin{description}[font=\normalfont\textit, leftmargin=\parindent]
    
        \item[Step 1.] Let us verify the gamma functional equation for $\g$. For every $x,y\in G$ such that $0\le x,y<p^{n-u}$
        \[
        \begin{aligned}
            \g(x^{\g(y)}+y)&=\g(x^{\s_{p^uy+1}}+y) \\
            &=\g((p^uy+1)x+y) =\g(p^uxy+x+y) \\
            &=\s_{p^u(p^uxy+x+y)+1} =\s_{p^{2u}xy+p^ux+p^uy+1} \\
            &=\s_{(p^ux+1)(p^uy+1)} =\s_{p^ux+1}\s_{p^uy+1} \\
            &=\g(x)\g(y) 
        \end{aligned}
        \]
        Then $\g$ is a gamma function on $G$.
        
        \item[Step 2.] Denote by $\c$ the circle operation on $G$ induced by $\g$. From \thref{thm:srg1}, we know that the map $\nu=\nu_{u}\colon(G,\c)\to N_{u}$ is an isomorphism of groups, hence, it is enough to show that $(G,\c)\cong\C_{p^n}$. We are looking for an element $r\in G$ such that
        $$(G,\c)=\left\langle r\, :\,r^{\c p^{n}}=0\right\rangle\cong\C_{p^n}.$$
        We claim that
        \begin{equation} \label{eqn:cpngcu_1}
            1^{\c k}=\sum_{i=0}^{k-1}\binom{k}{i}(p^u)^{k-1-i}\qquad\forall k\in\N
        \end{equation}
        and prove it by induction. Trivially $1^{\c 0}=0$ (because the sum in \eqref{eqn:cpngcu_1} is empty) and $1^{\c 1}=1$. Assume \eqref{eqn:cpngcu_1} true for some $k\in\N$. Then
        \begin{align*}
            1^{\c(k+1)} &= 1^{\c k}\c 1 = \left(1^{\c k}\right)^{\g(1)}+1 = \left(p^u+1\right)\cdot\left(\sum_{i=0}^{k-1}\binom{k}{i}(p^u)^{k-1-i}\right)+1 \\
            &= \sum_{i=0}^{k-1}\binom{k}{i}(p^u)^{k-1-(i-1)} + \sum_{i=0}^{k-1}\binom{k}{i}(p^u)^{k-1-i} +1 \\
            &= \sum_{j=-1}^{k-2}\binom{k}{j+1}(p^u)^{k-1-j} + \sum_{j=0}^{k-1}\binom{k}{j}(p^u)^{k-1-j} +1 \\
            &= \binom{k}{0}(p^u)^{k} + \sum_{j=0}^{k-2}\left[\binom{k}{j+1}+\binom{k}{j}\right](p^u)^{k-1-j} + \binom{k}{k-1}(p^u)^0 +1 \\
            &= \binom{k+1}{0}(p^u)^{(k+1)-1} + \sum_{j=0}^{k-2}\binom{k+1}{j+1}(p^u)^{k-1-j} + \binom{k+1}{k}(p^u)^0 \\
            &= \sum_{j=-1}^{k-1}\binom{k+1}{j+1}(p^u)^{k-1-j} = \sum_{i=0}^{k}\binom{k+1}{i}(p^u)^{k-i} \\
            &= \sum_{i=0}^{(k+1)-1}\binom{k+1}{i}(p^u)^{(k+1)-1-i}
        \end{align*}
        where we used the well known facts $$\binom{k}{j+1}+\binom{k}{j}=\binom{k+1}{j+1},\qquad\qquad\binom{k}{0}=\binom{k+1}{0}.$$
        Moreover we can simplify this expression as follows: for every $k\in\N$
        \[
        \begin{aligned}
            1^{\c k} &=\sum_{i=0}^{k-1}\binom{k}{i}(p^u)^{k-1-i}=p^{-u}\sum_{i=0}^{k-1}\binom{k}{i}(p^u)^{k-i} \\
            &= p^{-u}\left[\sum_{i=0}^{k}\binom{k}{i}(p^u)^{k-i} -1\right] = p^{-u}\left[(p^u+1)^k-1\right]\\
        \end{aligned}
        \]
        and, from the \thref{thm:puk_lemma_mod}, we know that for $0\le k\le p^n$ we have
        \begin{equation} \label{eqn:cpngcu_2}
            1^{\c k}=p^{-u}\left[(p^u+1)^k-1\right]\equiv 0\pmod{p^n}\iff k=p^n
        \end{equation}
        that is, the order of $1\in G$ with respect to the circle operation $\c$ is $p^n$. We can set $r=1$ and, thanks to \eqref{eqn:cpngcu_2}, it is the sought generator, so we can conclude that $(G,\c)$ is isomorphic to $\C_{p^n}$.  \qedhere
    \end{description}
\end{proof}

\begin{lemma} \thlabel{thm:cpn_k_lemma}
    Let $G=\C_{p^n}$ be a cyclic group of order $p^n$ and let $N\le\hol(G)$ be a regular subgroup. Then $|\nor_{\sym(G)}(N)\cap\aut(G)|=|K|$ where $$K=\left\{(k,c)\in\Z\times\Z\, :\,0\le k<p^{n-1},\ 1\le c < p,\ \g(x)=\g((kp+c)x)\quad \forall x\in G\right\}.$$
\end{lemma}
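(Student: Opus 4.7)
The plan is to mirror the structure of the proof of \thref{thm:k_lemma}, making only the adjustments required to account for the fact that $\aut(\C_{p^n})$ has order $p^{n-1}(p-1)$ rather than $2^{n-1}$, and that its elements must be parameterized accordingly.

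First I would use the bijection between regular subgroups of $\hol(G)$ and gamma functions from \thref{thm:srg1}, together with \thref{thm:gamma_alpha_lemma}, to translate the normalizer condition into a stabilizer condition. Concretely, an automorphism $\alpha\in\aut(G)$ lies in $\nor_{\sym(G)}(N)$ if and only if $N^\alpha=N$, and by \thref{thm:gamma_alpha_lemma} this happens if and only if $\g^\alpha=\g$. Hence $|\nor_{\sym(G)}(N)\cap\aut(G)|$ equals the size of the stabilizer of $\g$ under the conjugation action of $\aut(G)$ on gamma functions.

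Next I would simplify the condition $\g^\alpha=\g$. Since $G=\C_{p^n}$ is cyclic, $\aut(G)$ is abelian, so the conjugation formula $\g^\alpha(x)=\alpha\mo\g(x^{\alpha\mo})\alpha$ collapses to $\g^\alpha(x)=\g(x^{\alpha\mo})$. Thus $\g^\alpha(x)=\g(x)$ is equivalent to $\g(x^{\alpha\mo})=\g(x)$ for all $x\in G$, and since $\alpha\mapsto\alpha\mo$ is a bijection of $\aut(G)$, this is in turn equivalent to $\g(x)=\g(x^\alpha)$ for all $x\in G$.

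Finally I would parameterize. Recall $\aut(G)\cong(\Z/p^n\Z)^\ast$ acts on $G$ (written additively) as $\s_m\colon x\mapsto mx$, where $m$ ranges over the integers with $1\le m<p^n$ and $\gcd(m,p)=1$, i.e., over the integers not divisible by $p$. Such integers are in bijection with pairs $(k,c)$ satisfying $0\le k<p^{n-1}$ and $1\le c<p$ via the unique division $m=kp+c$. Substituting $x^\alpha=(kp+c)x$ in the stabilizer condition yields exactly the set $K$, so $|\nor_{\sym(G)}(N)\cap\aut(G)|=|K|$.

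There is essentially no obstacle here: the argument is formal and structurally identical to \thref{thm:k_lemma}. The only point requiring care is the parameterization of $\aut(\C_{p^n})$; whereas in the $p=2$ case the units of $\Z/2^n\Z$ were described by a single parameter $2k+1$, for odd $p$ one needs two parameters $(k,c)$ to capture all residues coprime to $p$, which is precisely what the statement of the lemma encodes.
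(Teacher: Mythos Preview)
Your proposal is correct and follows exactly the approach of the paper: the paper omits the proof of this lemma, indicating that it is obtained from the proof of \thref{thm:k_lemma} by the obvious substitution, and your argument reproduces that proof with the only nontrivial change being the two-parameter description $m=kp+c$ of the units of $\Z/p^n\Z$ in place of the one-parameter description $2k+1$ used for $p=2$.
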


\begin{proof}
    To compute the number $|\nor_{\sym(G)}(N)\cap\aut(G)|$ it is enough to find how many $\alpha\in\aut(G)$ are such that $\gamma^\alpha=\gamma$, that is, since $\aut(G)$ is abelian, when 
    \begin{align*}
        \g^\alpha(x)=\g(x)\quad\forall x\in G &\iff\alpha\mo\g(x^{\alpha\mo})\alpha=\g(x)\quad\forall x\in G\\
        &\iff \g(x^{\alpha\mo})=\g(x)\quad\forall x\in G\\
        &\iff \g(x)=\g(x^{\alpha})\quad\forall x\in G  
    \end{align*}
    where the last equivalence holds thanks to the substitution $x\mapsto x^\alpha$ and the arbitrariness of $x\in G$. Moreover, each element $\alpha\in\aut(G)$ is of the form $\alpha=\sigma_{kp+c}$ for some $0\le k<p^{n-1}$ and $\ 1\le c < p$, therefore, it is enough to find how many pairs of $(k,c)$ are such that $$\g(x)=\g((kp+c)x)\qquad\forall x\in G$$ so $|\nor_{\sym(G)}(N)\cap\aut(G)|=|K|$.
\end{proof}

\begin{proposition} \thlabel{thm:cpnclassgc}
    There are (disjoint) conjugacy classes of regular subgroups isomorphic to $\C_{p^{n}}$ of sizes $1,p-1,p^2-p,p^3-p^2,\dots,p^{n-1}-p^{n-2}$, namely they are $$\left\{N_{u}^\alpha\, :\,\alpha\in\aut(G)\right\}\qquad u=1,\dots,n$$ of size $p^{n-u}-p^{n-u-1}$, for $u\neq n$.
\end{proposition}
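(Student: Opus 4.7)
The plan is to compute each conjugacy class size independently via Lemma \thref{thm:cpn_k_lemma} and Corollary \thref{thm:gamma_counts}, establish disjointness via a conjugation-invariant, and then sum by telescoping.

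First I would specialize Lemma \thref{thm:cpn_k_lemma} to $N_u$. For $\gamma_u(x) = \sigma_{p^u x + 1}$, the condition $\gamma_u(x) = \gamma_u((kp + c)x)$ unpacks as $p^u(kp + c - 1)x \equiv 0 \pmod{p^n}$ for all $x \in G$; testing $x = 1$ suffices, so the condition reduces to $kp + c \equiv 1 \pmod{p^{n-u}}$. If $u = n$ the congruence is vacuous, so $|K| = |\aut(G)|$ and the class size is $1$. If $u < n$ the congruence mod $p$ forces $c \equiv 1 \pmod p$, which together with $1 \le c < p$ pins down $c = 1$; the residual condition $k \equiv 0 \pmod{p^{n-u-1}}$ admits exactly $p^{n-1}/p^{n-u-1} = p^u$ values of $k$. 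Hence $|K| = p^u$, and by Corollary \thref{thm:gamma_counts} the class of $N_u$ has size
\[
\frac{|\aut(G)|}{p^u} = \frac{p^{n-1}(p-1)}{p^u} = p^{n-u} - p^{n-u-1}.
\]

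Next I would verify that these $n$ classes are pairwise disjoint. Since $\aut(G)$ is abelian, Proposition \thref{thm:gamma_alpha_lemma}(ii) yields $\gamma_u^\alpha(G) = \alpha^{-1}\gamma_u(G)\alpha = \gamma_u(G)$, so the image cardinality $|\gamma_u(G)|$ is a conjugation invariant of the class. A direct inspection — noting $\sigma_{p^u x + 1} = \sigma_{p^u y + 1}$ iff $x \equiv y \pmod{p^{n-u}}$ — gives $|\gamma_u(G)| = p^{n-u}$, and these values are pairwise distinct for $u \in \{1, \ldots, n\}$. So no regular subgroup from the class of $N_u$ coincides with one from the class of $N_v$ when $u \ne v$.

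Finally the total count telescopes:
\[
1 + \sum_{u=1}^{n-1}(p^{n-u} - p^{n-u-1}) = 1 + (p^{n-1} - 1) = p^{n-1}.
\]
The only subtlety is the edge case $u = n$, where $\gamma_n \equiv \sigma_1$ is the trivial constant gamma function and the congruence becomes vacuous, but this is handled separately in the first step. Everything else is a direct application of earlier results, and I do not anticipate any serious obstacle.
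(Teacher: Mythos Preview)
Your proposal is correct and follows essentially the same approach as the paper, which (deferring to the analogous $p=2$ argument in \thref{thm:c2nclasses}) computes each class size via \thref{thm:cpn_k_lemma} and \thref{thm:gamma_counts}. Your explicit disjointness argument via the conjugation-invariant $|\gamma_u(G)|=p^{n-u}$ is a welcome addition that the paper leaves implicit.
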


\begin{proof}
    Fix $u\in\{1,\dots,n\}$. We may assume that $u\neq n$ because this value of $u$ have been already studied in \thref{thm:cpng1}, and, since $N_n\norm\hol(G)$, its conjugacy class is a singleton. Let us reconsider the gamma function $\g_{u}\colon G\to\aut(G)$ of \thref{thm:cpngc} and denote it by $\g$, for the sake of simplicity. From \thref{thm:cpn_k_lemma}, we are looking for pairs $(k,c)\in\Z\times\Z$ are such that $0\le k<p^{n-1}$, $\ 1\le c < p$, and $\g(x)=\g((kp+c)x)$ for every $x\in G$, that is, since $\g$ is defined modulo $p^{n-u}$,    
    $$(kp+c)x\equiv x\pmod{p^{n-u}}.$$
    In particular, the previous condition needs to be fulfilled for $x=1$, therefore it is equivalent to $kp+(c-1)\equiv 0\pmod{p^{n-u}}$, that is, because $1\le c <p$ and $kp$ is a multiple of $p$,
    \[
    \begin{cases}
        c=1 \\
        k=t\cdot p^{n-(u+1)}
    \end{cases}
    \]
    for some $0\le t<p^u$. Thus, $$K=\left\{(t\cdot p^{n-u-1},1)\in\Z\times\Z\, :\,0\le t < p^u \right\}$$
    has cardinality $|K|=p^u$ and, thanks to \thref{thm:cpn_k_lemma} and \thref{thm:gamma_counts}, $$|\Gamma|=\frac{|\!\aut(G)|}{|K|}=\frac{p^{n-1}(p-1)}{p^u}=p^{n-u-1}\cdot(p-1)=p^{n-u}-p^{n-u-1}$$ is the size of the conjugacy class of $\gamma$.
\end{proof}

\begin{corollary}
    There are at least $p^{n-1}$ regular subgroups of $\hol(G)$, and they are all isomorphic to $\C_{p^n}$.
\end{corollary}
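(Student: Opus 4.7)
The plan is to read this corollary as a bookkeeping statement summarizing the existence results of this subsection: we have explicitly produced a family of regular subgroups of $\hol(G)$ in \thref{thm:cpnclassgc}, we count them, and we record their isomorphism type. The "at least" is there precisely because uniqueness (i.e.\ that these exhaust all regular subgroups of $\hol(G)$) has not yet been established, in parallel with the analogous step \thref{thm:c2nclassescor} in the $p=2$ case.

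First, I would invoke \thref{thm:cpnclassgc} to list the $n$ conjugacy classes $\{N_u^\alpha : \alpha \in \aut(G)\}$ for $u=1,\dots,n$, with sizes $p^{n-u}-p^{n-u-1}$ for $u<n$ and size $1$ for $u=n$. Since these classes are pairwise disjoint by \thref{thm:cpnclassgc}, the total count is
\[
1 + \sum_{u=1}^{n-1}\left(p^{n-u} - p^{n-u-1}\right) = 1 + (p^{n-1} - 1) = p^{n-1},
\]
where the inner sum telescopes. This produces the lower bound $p^{n-1}$ on the number of regular subgroups of $\hol(G)$.

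Next, to address the isomorphism type, I would appeal to \thref{thm:cpngc}, which gives $N_u \cong \C_{p^n}$ for every $u=1,\dots,n$, together with \thref{thm:gamma_alpha_lemma}(iv), which guarantees that $N^\alpha \cong N$ for every $\alpha \in \aut(G)$. Hence every regular subgroup in each of the conjugacy classes described above is isomorphic to $\C_{p^n}$, concluding the proof.

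There is no real obstacle here: the argument is a telescoping sum combined with the fact that conjugation by an automorphism preserves the isomorphism type of a regular subgroup. The only point worth being explicit about is the disjointness of the conjugacy classes for distinct values of $u$, which is implicit in \thref{thm:cpnclassgc} since each $\g_u$ is defined modulo a different power of $p$, so the associated gamma functions lie in genuinely distinct orbits under the conjugation action of \thref{thm:gamma_conj_class_regular}.
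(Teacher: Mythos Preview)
Your proposal is correct and matches the paper's approach: the corollary is stated without proof in the paper, being an immediate consequence of \thref{thm:cpnclassgc} (which already records the total $p^{n-1}$) together with \thref{thm:cpngc} and \thref{thm:gamma_alpha_lemma}(iv) for the isomorphism type. Your telescoping computation and the remark on disjointness of the conjugacy classes simply make explicit what the paper leaves implicit.
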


\begin{proof}
    It follows by summing together the sizes of the (disjoint) conjugacy classes found above. Taking into account that $\{N_n\}$ is the conjugacy class of $N_n$, the total number of those conjugates is $$1+\sum_{u=1}^{n-1}\left(p^{n-u}-p^{n-u-1}\right)=p^{n-1}$$
    where the last sum is telescopic. 
\end{proof}

\subsection{Uniqueness problem}

The \emph{uniqueness problem} in the case $p$ odd is way easier than the even case, because it is enough to mention and translate in our notation two results which already exist in literature. We start with a theorem from T. Kohl (see \cite {kohl1998prime} and \cite{byott1996hopfgalois}), which allows us to deal with the cyclic isomorphism type. 

\begin{theorem} [\cite{kohl1998prime}]
    There are exactly $p^{n-1}$ regular subgroups of $\hol(G)$ isomorphic to $\C_{p^n}$.
\end{theorem}

So far, we have found the exact number of cyclic regular subgroups of $\hol(G)$, but, a priori, there could exist also some other regular subgroup of another isomorphism type. This is, in fact, impossible, and we are going to prove it exploiting a result of E. Campedel, A. Caranti, and I. Del Corso. We first state such result and then we use it to reach the conclusion.

\begin{lemma}[\cite{Campedel_2020}] \thlabel{thm:cpn_uniqueness_cyclic}
    Let $G$ be a finite group and let $A\le G$ be a cyclic subgroup of order $p^n$, where $p$ is an odd prime. Let $\gamma\colon A\to\aut(G)$ be a relative gamma function on $A$, and denote by $\circ$ the induced circle operation on $A$. Then, also $(A,\circ)$ is cyclic of order $p^n$.
\end{lemma}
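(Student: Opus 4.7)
The plan is to identify $A$ additively with $\Zmod{p^n}$, encode the action of $\g(A)$ on $A$ by a single scalar, and then compute directly the order of a generator of $A$ with respect to the circle operation $\circ$, exploiting the cyclicity of the Sylow $p$-subgroup of $\aut(A)$ for $p$ odd. Since $A$ is $\g(A)$-invariant, every $\g(x)$ restricts to an automorphism of $A$; under the identification $A = \Zmod{p^n}$, this restriction is multiplication by some $c(x) \in (\Zmod{p^n})^*$. The circle operation then reads $x \circ y = c(y) x + y$, and the gamma functional equation rewritten as $\g(x \circ y) = \g(x)\g(y)$ (cf.\ \thref{thm:srg1}(e), which carries over to the relative setting since the relevant products live inside $A$) shows that $c \colon (A, \circ) \to (\Zmod{p^n})^*$ is a group homomorphism, so $c(A)$ is a $p$-subgroup of $(\Zmod{p^n})^*$.

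The first key step is to observe that $c(x) \equiv 1 \pmod p$ for every $x \in A$. Since $p$ is odd, $(\Zmod{p^n})^* \cong \Zmod{p-1} \times \Zmod{p^{n-1}}$ has a unique Sylow $p$-subgroup, namely the cyclic subgroup of order $p^{n-1}$ consisting of residues congruent to $1$ modulo $p$, so $c(A)$ is contained in it. The second key step is an explicit order computation for the element $1 \in A$: a straightforward induction gives $1^{\circ k} = \sum_{i=0}^{k-1} c(1)^i$ for every $k \ge 1$. If $c(1) = 1$, then $1^{\circ k} = k$ and $1$ has order $p^n$. Otherwise write $c(1) = 1 + p^u v$ with $p \nmid v$ and $1 \le u < n$; then $1^{\circ k} \equiv 0 \pmod{p^n}$ is equivalent to $(1 + p^u v)^k \equiv 1 \pmod{p^{n+u}}$, and for $p$ odd the multiplicative order of $1 + p^u v$ modulo $p^{n+u}$ is exactly $p^n$ (a standard $p$-adic valuation argument, in the spirit of \thref{thm:puk_lemma_mod}). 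In either case $1 \in A$ has order $p^n$ with respect to $\circ$, and since $|A| = p^n$ we conclude that $(A, \circ)$ is cyclic of order $p^n$, generated by $1$.

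The main obstacle I anticipate is justifying the congruence $c(A) \subseteq \{a : a \equiv 1 \pmod p\}$: the argument depends crucially on $(\Zmod{p^n})^*$ having a cyclic (hence unique) Sylow $p$-subgroup, which fails for $p = 2$ and $n \ge 3$ because of the extra involution $-1 \pmod{2^n}$. This is precisely the structural reason why the case $p = 2$ in Section~\ref{sec:p2} admits additional isomorphism types of regular subgroups (quaternion, dihedral, semidihedral, modular, and $\C_2 \times \C_{2^{n-1}}$) and requires substantially more casework, whereas for $p$ odd the present lemma forces every regular subgroup of $\hol(G)$ to be cyclic.
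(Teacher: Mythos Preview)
Your argument is correct. The paper does not supply its own proof of this lemma: it is quoted verbatim as a result of Campedel, Caranti, and Del Corso \cite{Campedel_2020} and used as a black box to rule out the non-cyclic isomorphism types in \thref{thm:classification_regular_p}. Your write-up therefore fills in a proof that the paper deliberately omits.

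The strategy you give is essentially the natural one and matches in spirit what the paper does elsewhere for explicit gamma functions (cf.\ the proof of \thref{thm:c2ngpmc} via \thref{thm:five_lemma_mod_3}, and \thref{thm:puk_lemma_mod} for $p$ odd): identify $A$ with $\Zmod{p^n}$, write the restricted action of $\g(y)$ as multiplication by $c(y)$, and compute the $\circ$-order of the element $1$ from the geometric sum $1^{\circ k}=\sum_{i=0}^{k-1} c(1)^{i}$. The only step requiring care is the one you flag yourself, namely that $c(A)$ lies in the Sylow $p$-subgroup $\{a: a\equiv 1\pmod p\}$ of $(\Zmod{p^n})^{*}$; your justification via the homomorphism $c\colon (A,\circ)\to(\Zmod{p^n})^{*}$ and the cyclicity of that Sylow subgroup for $p$ odd is exactly right, and your lifting-the-exponent computation of the multiplicative order of $1+p^{u}v$ modulo $p^{n+u}$ is the standard argument underlying \thref{thm:puk_lemma_mod}. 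The proof is complete and self-contained.
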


\begin{proposition}
   Each regular subgroup of $\hol(G)$ is cyclic.
\end{proposition}

\begin{proof}
    It follows directly from \thref{thm:cpn_uniqueness_cyclic} by considering $A=G$, which is cyclic of order $p^n$.
\end{proof}

\subsection{Mutual normalization problem}

In this subsection, we exploit several times the ring structure of $\Zmod{p^n}$, in particular the fact that all the elements divisible by $p$ are zero-divisors and all elements of $\Zmod{p^n}$ coprime with $p$ are invertible. 

\begin{definition} \thlabel{def:gamma_kc}
    Let $\gamma\colon G\to\aut(G)$ be a gamma function on $G$ and let $\sigma_{kp+c}\in\aut(G)$, for some $0\le k < p^{n-1}$ and $1\le c <p$. We denote by $\gamma^{k,c}$ the conjugate gamma function of $\gamma$ under $\sigma_{kp+c}\mo\in\aut(G)$ as in \thref{thm:gamma_alpha_lemma}, that is $$\gamma^{k,c} =\gamma^{\sigma_{kp+c}\mo}.$$
\end{definition}

\begin{definition}
    We denote as follows some relevant conjugacy classes of gamma functions associated with regular subgroups of $\hol(G)$, and their union.
    \[
    \begin{aligned}
        &\Gamma_{u}=\left\{\gamma_{u}^{k,c}\, :\,0\le k < p^{n-1},\ 1\le c <p\right\}\qquad u\in\{1,\dots, n\} \\
        &\Gamma =\bigcup_{u=1}^n\Gamma_u=\left\{\gamma_{u}^{k,c}\, :\,0\le k < p^{n-1},\ 1\le c <p,\ 1\le u\le n\right\}
    \end{aligned}
    \]
\end{definition}

\begin{proposition} \thlabel{thm:cpn_neo_c}
    Two gamma functions $\gamma_{u}^{k,c},\gamma_{v}^{h,d}\in\Gamma$ mutually normalize each other if and only if
    \[
        p^u(kp+c)\equiv p^v(hp+d)\pmod{p^{n-\max\{u,v\}}} 
    \]
\end{proposition}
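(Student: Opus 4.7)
The plan is to imitate the proof of Lemma \thref{thm:neo_c}, replacing the prime $2$ by the odd prime $p$. First, I would record the moduli: by \thref{thm:gamma_cyclic} (or directly from the explicit description in \thref{thm:cpngc}), $\gamma_u$ is defined modulo $p^{n-u}$, and since conjugation by an automorphism preserves the image, $\gamma_u^{k,c}$ is defined modulo $p^{n-u}$ as well; analogously $\gamma_v^{h,d}$ is defined modulo $p^{n-v}$.

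Next, I would compute the conjugate explicitly. Since $\aut(G)$ is abelian, the formula in \thref{thm:gamma_alpha_lemma} with $\alpha = \sigma_{kp+c}^{-1}$ reduces to
\[
\gamma_u^{k,c}(x) \;=\; \gamma_u\!\big((kp+c)x\big) \;=\; \sigma_{p^u(kp+c)x + 1},
\]
and similarly $\gamma_v^{h,d}(x) = \sigma_{p^v(hp+d)x+1}$.

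Then, applying \thref{thm:neo3} with $q = p^{n-u}$ and $r = p^{n-v}$ and substituting the expressions above, mutual normalization becomes the pair of congruences
\[
\begin{cases}
x \equiv (p^v(hp+d)y+1)x + y - (p^u(kp+c)x+1)y \pmod{p^{n-u}} \\
x \equiv (p^u(kp+c)y+1)x + y - (p^v(hp+d)x+1)y \pmod{p^{n-v}}
\end{cases}
\]
for all $x,y\in G$. The linear terms in $x$ and $y$ cancel on both lines, and both congruences collapse to
\[
p^u(kp+c)\,xy \;\equiv\; p^v(hp+d)\,xy \pmod{p^{n-u}}, \qquad p^u(kp+c)\,xy \;\equiv\; p^v(hp+d)\,xy \pmod{p^{n-v}}.
\]

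The only point requiring any argument beyond routine substitution is that these universally quantified congruences are equivalent to their specialization at $x=y=1$. The forward direction is immediate; conversely, if $p^u(kp+c) \equiv p^v(hp+d)$ modulo $p^{n-u}$ (resp.\ $p^{n-v}$), then multiplying by any $xy\in\Zmod{p^n}$ preserves the congruence. This yields exactly the claimed pair of conditions. I anticipate no significant obstacles, as the proof is essentially a prime-substitution version of the one for \thref{thm:neo_c}.
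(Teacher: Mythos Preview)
Your proposal is correct and follows essentially the same approach as the paper: record the moduli, compute the conjugates explicitly as $\gamma_u^{k,c}(x)=\sigma_{p^u(kp+c)x+1}$, apply \thref{thm:neo3}, simplify to the $xy$-congruences, and reduce to $x=y=1$. The paper's proof is in fact slightly terser about the conjugate computation but otherwise identical.
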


\begin{proof}
    We know that $\gamma_{u}^{k,c}$ is defined modulo $p^{n-u}$ and that $\gamma_{v}^{h,d}$ is defined modulo $p^{n-v}$, therefore from \thref{thm:neo3}, they mutually normalize each other if and only if for every $x,y\in G$
    \begin{align*}
        &\begin{cases}
            x\equiv x^{\gamma_{v}^{h,d}(y)}+y-y^{\gamma_{u}^{k,c}(x)} \pmod{p^{n-u}} \\
            x\equiv x^{\gamma_{u}^{k,c}(y)}+y-y^{\gamma_{v}^{h,d}(x)} \pmod{p^{n-v}} 
        \end{cases}
        \\[1em]
        &\begin{cases}
            x\equiv x^{\s_{p^v(hp+d)y+1}}+y-y^{\s_{p^u(kp+c)x+1}} \pmod{p^{n-u}} \\
            x\equiv x^{\s_{p^u(kp+c)y+1}}+y-y^{\s_{p^v(hp+d)x+1}} \pmod{p^{n-v}} 
        \end{cases}
        \\[1em]
        &\begin{cases}
            x\equiv (p^v(hp+d)y+1)x+y-(p^u(kp+c)x+1)y \pmod{p^{n-u}} \\
            x\equiv (p^u(kp+c)y+1)x+y-(p^v(hp+d)x+1)y \pmod{p^{n-v}} 
        \end{cases}
        \\[1em]
        &\begin{cases}
            p^u(kp+c)xy\equiv p^v(hp+d)xy \pmod{p^{n-u}} \\
            p^u(kp+c)xy\equiv p^v(hp+d)xy \pmod{p^{n-v}} 
        \end{cases}
    \end{align*}
    The last condition must hold for every $x,y\in G$, so in particular, for $x=y=1$. Observing that this particular case is also sufficient for its validity for every $x,y\in G$, and that one congruence implies the other in case $u$ and $v$ were different, the proof is accomplished.
\end{proof}

\begin{proposition} \thlabel{thm:cpn_neo_c1}
    The family $$H=\left\{\gamma_{u}^{k,c}\in\Gamma\, :\, \ceil{\frac{n}{2}}\le u\le n\right\}$$
    is composed of $p^{n-\ceil{\frac{n}{2}}}$ gamma functions, and they mutually normalize each other.
\end{proposition}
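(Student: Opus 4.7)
The plan is to transport the strategy used for \thref{thm:neo_c1} in the $p=2$ case essentially verbatim, since the two claims involved (pairwise mutual normalization and the size count) decouple cleanly. No new ideas beyond what is already in place for $p=2$ are needed.

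For the mutual normalization claim, I would apply the characterization in \thref{thm:cpn_neo_c} to an arbitrary pair $\gamma_u^{k,c},\gamma_v^{h,d}\in H$. The assumption $u,v\ge\ceil{\frac{n}{2}}$ yields $u\ge n-u$ and $v\ge n-v$, so $p^u\equiv 0\pmod{p^{n-u}}$ and $p^v\equiv 0\pmod{p^{n-v}}$ (the same divisibility also holding with $u$ and $v$ swapped). Hence both sides of both congruences appearing in \thref{thm:cpn_neo_c} vanish trivially, regardless of the choice of $k,h,c,d$, and the mutual normalization follows.

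For the cardinality claim, I would partition $H$ along the layers $\Gamma_u$, which are pairwise disjoint because by \thref{thm:gamma_cyclic} each $\gamma\in\Gamma_u$ is defined modulo $p^{n-u}$, an invariant preserved by conjugation. Using the count from \thref{thm:cpnclassgc}, namely $|\Gamma_u|=p^{n-u}-p^{n-u-1}$ for $u<n$ and $|\Gamma_n|=1$, the total telescopes:
$$|H|=1+\sum_{u=\ceil{n/2}}^{n-1}\bigl(p^{n-u}-p^{n-u-1}\bigr)=1+\bigl(p^{n-\ceil{n/2}}-1\bigr)=p^{n-\ceil{n/2}}.$$

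No genuine obstacle arises. The only mild care needed is to check disjointness of the layers $\Gamma_u$, for which the image-size invariant under conjugation by automorphisms is enough. Everything else is a one-line application of \thref{thm:cpn_neo_c} followed by a routine telescoping sum, which is precisely why the author flags this section as a straightforward generalization of the $p=2$ argument.
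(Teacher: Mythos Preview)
Your proposal is correct and mirrors the paper's approach exactly: mutual normalization is obtained as a direct consequence of \thref{thm:cpn_neo_c} (in the $p=2$ analogue the paper likewise writes that it ``follows directly from \thref{thm:neo_c}''), and the cardinality is computed via the same telescoping sum over the conjugacy-class sizes from \thref{thm:cpnclassgc}. The only extra you add is an explicit remark on the disjointness of the layers $\Gamma_u$, which the paper leaves implicit (it is already contained in the word ``disjoint'' in \thref{thm:cpnclassgc}).
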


\begin{proof}
    We divide the proof in two steps.
    \begin{description}[font=\normalfont\textit, leftmargin=\parindent]
    
        \item[Step 1.] Let $\gamma_{u}^{k,c},\gamma_{v}^{h,d}\in H$. Observe that since $u,v\ge\ceil{\frac{n}{2}}$, we have
        \begin{align*}
            \begin{cases}
                n-u\le n-\ceil{\frac{n}{2}} \\
                n-v\le n-\ceil{\frac{n}{2}}
            \end{cases}
            \implies 
            \begin{cases}
                p^{n-u},p^{n-v}\le p^{n-\ceil{\frac{n}{2}}}\le p^{\ceil{\frac{n}{2}}}\le p^u \\
                p^{n-u},p^{n-v}\le p^{n-\ceil{\frac{n}{2}}}\le p^{\ceil{\frac{n}{2}}}\le p^v
            \end{cases}
        \end{align*}
        and this implies that $p^u$ and $p^v$ are both zero $\bmod{\,p^{n-u}}$ and $\bmod{\,p^{n-v}}$. Therefore the equations 
        \[
        \begin{cases}
            p^u(kp+c)\equiv p^v(hp+d)\pmod{p^{n-u}} \\
            p^u(kp+c)\equiv p^v(hp+d)\pmod{p^{n-v}} 
        \end{cases}
        \]
        hold and from \thref{thm:cpn_neo_c}, $\gamma_{u}^{k,c}$ and $\gamma_{v}^{h,d}$ mutually normalize each other.
        
        \item[Step 2.] Let us count the elements of $H$. We know from \thref{thm:cpnclassgc} that the conjugacy class of each $\gamma_{u}^{k,c}$ contains exactly $p^{n-u}-p^{n-u-1}$ elements, for every $u\in\{1,\dots,n-1\}$, and from \thref{thm:cpng1} that the conjugacy class of $\gamma_{n}^{k,c}=\gamma_1$ is a singleton. Therefore
        \[
        \begin{aligned}
            |H| &= 1+\sum_{u=\ceil{\frac{n}{2}}}^{n-1}\left( p^{n-u}-p^{n-u-1}\right)=p^{n-\ceil{\frac{n}{2}}}
        \end{aligned}
        \]
        where the last sum is telescopic. \qedhere
    \end{description}
\end{proof}

\begin{proposition} \thlabel{thm:cpn_neo_c2}
    Let $\gamma_{u}^{k,c},\gamma_{v}^{h,d}\in\Gamma$ be two gamma functions such that $$1\le v<\ceil{\frac{n}{2}}\le u\le n.$$ Then $\gamma_{u}^{k,c}$ and $\gamma_{v}^{h,d}$ do not mutually normalize each other.
\end{proposition}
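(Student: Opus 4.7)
The plan is to follow the pattern of Proposition \thref{thm:neo_c2}, substituting the prime $2$ with $p$. I would start by invoking \thref{thm:cpn_neo_c} to reduce mutual normalization to the simultaneous conditions
\[
p^u(kp+c)\equiv p^v(hp+d)\pmod{p^{n-u}}
\quad\text{and}\quad
p^u(kp+c)\equiv p^v(hp+d)\pmod{p^{n-v}}.
\]
Since $1\le c,d<p$, both integers $kp+c$ and $hp+d$ are coprime to $p$ and therefore units in $\Zmod{p^m}$ for every $m\ge 1$. Multiplying through by the appropriate inverses, I can replace the two conditions by the cleaner statements $p^u\equiv p^v\pmod{p^{n-u}}$ and $p^u\equiv p^v\pmod{p^{n-v}}$.

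Next I would use the hypothesis on $u$ and $v$ to push these into a contradiction. The inequality $u\ge\ceil{\frac{n}{2}}$ gives $2u\ge n$, hence $u\ge n-u$ and $p^u\equiv 0\pmod{p^{n-u}}$; dually, $v<\ceil{\frac{n}{2}}$ forces $2v<n$ and so $n-v>v$, which implies that $p^v$ has $p$-adic valuation $v$ strictly smaller than $n-v$ and hence $p^v\not\equiv 0\pmod{p^{n-v}}$. Substituting these into the two simplified congruences, the first becomes the demand that $p^v\equiv 0\pmod{p^{n-u}}$, equivalently $u+v\ge n$, whereas the second becomes $p^u\not\equiv 0\pmod{p^{n-v}}$, equivalently $u+v<n$. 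These two requirements are mutually exclusive, so the criterion of \thref{thm:cpn_neo_c} cannot be satisfied and $\gamma_{u}^{k,c}$ and $\gamma_{v}^{h,d}$ do not mutually normalize each other.

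The argument is essentially routine once \thref{thm:cpn_neo_c} is in hand; the only point that really deserves a line of justification is the unit-cancellation step, which relies on the fact that both $kp+c$ and $hp+d$ are coprime to $p$. Unlike the analogous statement for $p=2$, where a second sub-range $2\le v<u<\ceil{n/2}$ had to be handled separately, here the hypothesis is restricted to $1\le v<\ceil{n/2}\le u\le n$, so no further case split is needed and the proof terminates with the single incompatibility above.
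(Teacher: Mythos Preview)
Your argument is correct and mirrors the paper's proof of \thref{thm:neo_c2} for $p=2$, which is the intended template here. One caveat on the step you yourself flag: ``multiplying through by the appropriate inverses'' does not literally justify replacing $p^u(kp+c)\equiv p^v(hp+d)\pmod{p^m}$ by $p^u\equiv p^v\pmod{p^m}$, since the two sides carry \emph{different} units and no single multiplication cancels both. The equivalence nevertheless holds because $v<u$: factoring
\[
p^u(kp+c)-p^v(hp+d)=p^v\bigl(p^{u-v}(kp+c)-(hp+d)\bigr),
\]
the bracketed term is $\equiv -d\pmod p$ and hence a unit, so each congruence is equivalent to $p^v\equiv 0$ modulo the relevant power of $p$ --- the same condition one gets from $p^u-p^v=p^v(p^{u-v}-1)$. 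This is exactly the manoeuvre the paper uses in the proof of \thref{thm:cpn_neo_c3}; note that the second congruence alone already forces $v\ge n-v$, contradicting $v<\lceil n/2\rceil$, so the first congruence and the case split on $u+v$ versus $n$ are not even needed.
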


\begin{proof}
    From \thref{thm:cpn_neo_c}, $\gamma_{u}^{k,c},\gamma_{v}^{h,d}\in\Gamma$ mutually normalize each other if and only if
    \begin{equation} \label{eqn:cpn_neo_c2_1}
        \begin{cases}
            p^u(kp+c)\equiv p^v(hp+d)\pmod{p^{n-u}} \\
            p^u(kp+c)\equiv p^v(hp+d)\pmod{p^{n-v}} 
        \end{cases}
    \end{equation}
    Observe that, since $u\ge\ceil{\frac{n}{2}}$, we have that
    $$p^{n-u} \le p^{n-\ceil{\frac{n}{2}}} \le p^{\frac{n}{2}} \le p^{\ceil{\frac{n}{2}}} \le p^u$$
    then $p^u\equiv 0\pmod{p^{n-u}}$. In the same way, since $v < \ceil{\frac{n}{2}}$ we have that $v \le \floor{\frac{n}{2}}$ and
    $$p^{n-v} \ge p^{n-\floor{\frac{n}{2}}} \ge p^{\frac{n}{2}} \ge p^{\floor{\frac{n}{2}}} \ge p^v$$
    that is $p^{n-v}\ge p^v$, and the equality holds if and only if $v=\frac{n}{2}$ but this is impossible since $v < \ceil{\frac{n}{2}}$. Thus $p^{n-v} > p^v$ and $p^v\not\equiv 0\pmod{p^{n-v}}$. If we neglect the invertible factors, we may rewrite \eqref{eqn:cpn_neo_c2_1} equivalently as
    \begin{equation} \label{eqn:cpn_neo_c2_2}
        \begin{cases}
            p^v\equiv 0\pmod{p^{n-u}} \\
            p^u\not\equiv 0\pmod{p^{n-v}} 
        \end{cases}
    \end{equation}
    We need to distinguish among two cases. If $u+v\ge n$, then $u\ge n-v$ implies that $p^u\equiv 0\pmod{p^{n-v}}$, in contradiction with \eqref{eqn:cpn_neo_c2_2}. Otherwise, if $u+v < n$, then $v < n-u$ implies that $p^v\not\equiv 0\pmod{p^{n-u}}$, in contradiction with \eqref{eqn:cpn_neo_c2_2}. Therefore, \thref{thm:cpn_neo_c} does not hold and then $\gamma_{u}^{k,c}$ and $\gamma_{v}^{h,d}$ do not mutually normalize each other. 
\end{proof}

\begin{proposition} \thlabel{thm:cpn_neo_c3}
    Let $\gamma_{u}^{k,c},\gamma_{v}^{h,d}\in\Gamma$ be two gamma functions such that $$1\le v < u <\ceil{\frac{n}{2}}.$$ Then $\gamma_{u}^{k,c}$ and $\gamma_{v}^{h,d}$ do not mutually normalize each other.
\end{proposition}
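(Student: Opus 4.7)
The plan is to use \thref{thm:cpn_neo_c} to recast the mutual normalization of $\gamma_u^{k,c}$ and $\gamma_v^{h,d}$ into a pair of modular congruences, and then show that the first of these congruences (modulo $p^{n-u}$) is already impossible, by a $p$-adic valuation argument.

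First, I would extract from the hypothesis $1\le v<u<\ceil{\tfrac{n}{2}}$ the key inequality $n-u>u>v$, which holds regardless of the parity of $n$: indeed $u<\ceil{\tfrac{n}{2}}$ gives $u\le \floor{\tfrac{n}{2}}$, hence $n-u\ge \ceil{\tfrac{n}{2}}>u$. In particular $n-u-v\ge 1$, so reductions modulo $p^{n-u}$ still retain nontrivial information about the $p$-adic part at level $v$.

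Next, I apply \thref{thm:cpn_neo_c} and focus on the first congruence
\[
p^u(kp+c)\equiv p^v(hp+d)\pmod{p^{n-u}}.
\]
The key observation is that $1\le c,d<p$, so $kp+c$ and $hp+d$ are both coprime to $p$, i.e., units modulo any power of $p$. Writing the difference as $p^v\bigl[p^{u-v}(kp+c)-(hp+d)\bigr]$, the bracketed expression reduces modulo $p$ to $-d\not\equiv 0\pmod{p}$, since $u-v\ge 1$. Hence the $p$-adic valuation of $p^u(kp+c)-p^v(hp+d)$ is exactly $v$. For the congruence modulo $p^{n-u}$ to hold, this valuation must be at least $n-u$, forcing $v\ge n-u$. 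This contradicts $v<u<n-u$.

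Therefore the system of \thref{thm:cpn_neo_c} cannot be satisfied and the two gamma functions do not mutually normalize each other. I do not anticipate a real obstacle: the argument is essentially the second half of \thref{thm:neo_c2} transposed from $2$ to the odd prime $p$, with the slight simplification that $c,d$ being units modulo $p$ makes the valuation computation even cleaner than in the $p=2$ setting.
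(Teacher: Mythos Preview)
Your proof is correct and follows essentially the same route as the paper's: factor the difference as $p^v$ times a $p$-unit and derive a contradiction from $v$ being too small. The only cosmetic difference is that you work with the first congruence of \thref{thm:cpn_neo_c} (modulo $p^{n-u}$, yielding the contradiction $v\ge n-u$), whereas the paper uses the second one (modulo $p^{n-v}$, yielding $v\ge n-v$); both fail for the same $p$-adic reason.
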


\begin{proof}
    Consider only the second equation of \thref{thm:cpn_neo_c}
    \begin{equation} \label{eqn:cpn_neogc_2}
        p^u(kp+c)\equiv p^v(hp+d)\pmod{p^{n-v}}.
    \end{equation}
    or, equivalently,
    $$p^v\left(p^{u-v}(kp+c)-hp-d\right)\equiv 0\pmod{p^{n-v}}$$
    where the term $p^{u-v}(kp+c)-hp-d$ is not divisible by $p$, hence invertible. Therefore the equation \eqref{eqn:cpn_neogc_2} is equivalent to $p^v\equiv 0\pmod{p^{n-v}}$ which is false because $v<\ceil{\frac{n}{2}}$ implies that $p^v < p^{n-v}$. Then \thref{thm:cpn_neo_c} does not hold and the conclusion follows.
\end{proof}

\begin{proposition} \thlabel{thm:cpn_neo_cu}
    Let $\gamma_{u}^{k,c},\gamma_{u}^{h,d}\in\Gamma_u$ be two gamma functions such that $1\le u <\ceil{\frac{n}{2}}$. Then $\gamma_{u}^{k,c}$ and $\gamma_{u}^{h,d}$ mutually normalize each other if and only if 
    \[
    \begin{cases}
        k\equiv h\pmod{p^{n-2u-1}} \\
        c=d
    \end{cases}
    \]
\end{proposition}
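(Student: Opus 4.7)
The plan is to apply Proposition \thref{thm:cpn_neo_c} in the special case $v=u$, where the two congruences become identical. This collapses the mutual normalization criterion to a single condition
\[
    p^u(kp+c) \equiv p^u(hp+d) \pmod{p^{n-u}},
\]
which, after cancelling the common factor $p^u$ (and recalling that by hypothesis $n-u > u$, so $n-2u \geq 1$), is equivalent to
\[
    (k-h)p + (c-d) \equiv 0 \pmod{p^{n-2u}}.
\]

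The next step is to extract the two separate conclusions from this single congruence. Since $n-2u \ge 1$, reducing modulo $p$ forces $c \equiv d \pmod{p}$; because $c$ and $d$ both lie in the range $\{1, \ldots, p-1\}$, this is equivalent to the equality $c = d$. Once $c=d$ is established, the congruence simplifies to $(k-h)p \equiv 0 \pmod{p^{n-2u}}$, which, using that $p$ is a prime and $p^{n-2u-1}$ is a well-defined positive integer power (when $n-2u=1$ we interpret $p^0=1$ and the statement becomes vacuous, consistent with the conclusion), is equivalent to $k \equiv h \pmod{p^{n-2u-1}}$. The converse direction is an immediate substitution into the simplified congruence above.

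I do not expect any substantial obstacle in this proof, since \thref{thm:cpn_neo_c} already reduces the problem to elementary modular arithmetic in $\Zmod{p^n}$. The only delicate point is ensuring that the range restriction $1 \le c, d < p$ is invoked precisely when converting $c \equiv d \pmod{p}$ into $c=d$; and that the hypothesis $u < \lceil n/2 \rceil$ guarantees $n-2u \ge 1$, so that the reduction modulo $p$ is non-trivial. Both are consequences of the standing hypotheses, so the whole argument fits in a few lines following \thref{thm:cpn_neo_c}.
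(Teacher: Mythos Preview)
Your proposal is correct and follows essentially the same route as the paper: both apply \thref{thm:cpn_neo_c} with $u=v$ to reduce to the single congruence $(k-h)p+(c-d)\equiv 0\pmod{p^{n-2u}}$, then use the range constraint $1\le c,d<p$ together with $n-2u\ge 1$ to split this into $c=d$ and $k\equiv h\pmod{p^{n-2u-1}}$. Your phrasing of the split (reduce modulo $p$ first, then simplify) is in fact slightly cleaner than the paper's.
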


\begin{proof}
    This is an easy consequence of \thref{thm:cpn_neo_c} when $u=v$, indeed 
    \[
    \begin{aligned}
        p^u(kp+c)\equiv p^u(hp+d) &\iff p^u(kp+c-hp-d)\equiv 0 \pmod{p^{n-u}}\iff (k-h)p+(c-d)\equiv 0\pmod{p^{n-2u}} \\
        &\iff 
        \begin{cases}
            (k-h)p\equiv 0\pmod{p^{n-2u}} \\
            c-d\equiv 0\pmod{p^{n-2u}} \\
        \end{cases}
        \iff 
        \begin{cases}
            k-h\equiv 0\pmod{p^{n-2u-1}} \\
            c-d=0 \\
        \end{cases}
    \end{aligned}
    \]
    because $0\le k,h<p^{n-1}$ and $1\le c,d<p$, and $(k-h)p$ is a multiple of $p$ but $p\nmid (d-c)$.
\end{proof}

\begin{proposition} \thlabel{thm:cpn_neo_cu1}
    For every fixed $1\le u < \ceil{\frac{n}{2}}$, $0\le t<p^{n-2u-1}$, and $1\le c<p$, the family $$A_u^{t,c}=\left\{\gamma_{u}^{k,c}\in\Gamma\, :\, k\equiv t\pmod{p^{n-2u-1}}\right\}$$
    is composed of $p^u$ gamma functions, and they mutually normalize each other. In total, there are $$\frac{1}{p+1}\left(p^{n-1}-p^{n-2\ceil{\frac{n}{2}}+1}\right)$$ distinct $A_u^{t,c}$.
\end{proposition}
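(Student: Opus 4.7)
The plan is to verify in succession the three claims of the statement—mutual normalization inside each $A_u^{t,c}$, the cardinality $|A_u^{t,c}|=p^u$, and the total count of distinct families—mirroring the strategy of the $p=2$ analog \thref{thm:neo_cu1}. The mutual normalization part is immediate from \thref{thm:cpn_neo_cu}: by construction every pair of elements of $A_u^{t,c}$ shares the same second parameter $c$ and has first parameters congruent modulo $p^{n-2u-1}$, which is exactly the criterion given there.

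For the cardinality, I would first show that, for each fixed $u$ and $c$, the subset $\Gamma_u^c=\{\gamma_u^{k,c}\in\Gamma_u : 0\le k<p^{n-1}\}$ has cardinality $p^{n-u-1}$. This follows from \thref{thm:cpnclassgc} together with the observation that the identity $\gamma_u^{k_1,c_1}=\gamma_u^{k_2,c_2}$ amounts to $k_1p+c_1\equiv k_2p+c_2\pmod{p^{n-u}}$, so reduction modulo $p$ forces $c_1=c_2$, and the $p-1$ subfamilies $\Gamma_u^c$ therefore form a disjoint partition of $\Gamma_u$. Next, for fixed $u$ and $c$, the families $\{A_u^{t,c}\}_{0\le t<p^{n-2u-1}}$ partition $\Gamma_u^c$, and they all have the same cardinality via the bijection $\gamma_u^{k,c}\mapsto\gamma_u^{k+(t_2-t_1),c}$ from $A_u^{t_1,c}$ to $A_u^{t_2,c}$; dividing the $p^{n-u-1}$ elements of $\Gamma_u^c$ into the $p^{n-2u-1}$ equal parts yields $|A_u^{t,c}|=p^u$.

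Finally, I would count the distinct families by summing $(p-1)\cdot p^{n-2u-1}$ over $u=1,\ldots,\ceil{n/2}-1$; the resulting geometric series telescopes to $\frac{p^{n-1}-p^{n-2\ceil{n/2}+1}}{p+1}$. Disjointness of families across different values of $u$ is immediate since the $\Gamma_u$ are disjoint conjugacy classes, while disjointness for different $(t,c)$ inside the same $\Gamma_u$ follows from the identification criterion discussed above. The main obstacle I anticipate is checking that the map $\gamma_u^{k,c}\mapsto\gamma_u^{k+(t_2-t_1),c}$ is well-defined on gamma functions rather than merely on the $k$-representatives; this boils down to the divisibility $p^{n-2u-1}\mid p^{n-u-1}$, which holds thanks to the running hypothesis $u\ge 1$. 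Aside from this verification, the remainder reduces to careful bookkeeping of modular equivalence classes and a straightforward geometric sum.
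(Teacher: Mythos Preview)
Your proposal is correct and follows essentially the same approach as the paper: both establish a bijection between $A_u^{t_1,c}$ and $A_u^{t_2,c}$ by shifting the $k$-parameter, divide the size of the ambient conjugacy class by the number of $(t,c)$-classes to obtain $|A_u^{t,c}|=p^u$, and finish with the geometric sum $\sum_{u=1}^{\ceil{n/2}-1}(p-1)p^{n-2u-1}$. Your version differs only in organization---you first split $\Gamma_u$ by the parameter $c$ and then by $t$, and you make the well-definedness of the bijection explicit---but these are refinements of the same argument rather than a different route.
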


\begin{proof}
    Observe that, once fixed $1\le u < \ceil{\frac{n}{2}}$ and $1\le c<p$, two families $A_{u}^{t_1,c}, A_{u}^{t_2,c}$ have the same number of elements because every $\gamma_{u}^{k,c}$ is defined modulo $p^{n-u}$ and there is a bijection 
    \[
    \begin{aligned}
        \varphi\colon A_{u}^{t_1,c}&\to A_{u}^{t_2,c} \\
        \gamma_{u}^{k,c} &\mapsto\gamma_{u}^{h,c}
    \end{aligned}
    \]
    where $k$ and $h$ are such that $k=q(p^{n-2u-1})+t_1$ and $h=q(p^{n-2u-1})+t_2$, for the same $q\in\Z$. Therefore, recalling that the conjugacy class of $\gamma_{u}^{k,c}$ has $p^{n-u}-p^{n-u-1}$ different elements, dividing by all the possible choices of $t$ and $c$, we obtain that $$|A_u^{t,c}|=\frac{p^{n-u}-p^{n-u-1}}{p^{n-2u-1}\cdot(p-1)}=p^u$$ for every $1\le u < \ceil{\frac{n}{2}}$, $0\le t<p^{n-2u-1}$ and $1\le c<p$. 
    
    Moreover, for every fixed $1\le u < \ceil{\frac{n}{2}}$, $0\le t<p^{n-2u-1}$ and $1\le c<p$, there are $p^{n-2u-1}(p-1)$ distinct $A_u^{t,c}$, therefore in total they are
    $$\sum_{u=1}^{\ceil{\frac{n}{2}}-1}\sum_{t=0}^{p^{n-2u-1}-1}\sum_{c=1}^{p-1}1=\sum_{u=1}^{\ceil{\frac{n}{2}}-1}p^{n-2u-1}(p-1) = \frac{1}{p+1}\left(p^{n-1}-p^{n-2\ceil{\frac{n}{2}}+1}\right)$$
    that is the conclusion.
\end{proof}

Since we have taken into account all the possibilites, we can also conclude that

\begin{proposition} \thlabel{thm:cpn_neo_n1}
   There are no other mutual normalizations among pairs of elements of $\Gamma$.
\end{proposition}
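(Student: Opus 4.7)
The plan is a straightforward case-exhaustion argument on the pair of indices $(u,v)$ of two gamma functions $\gamma_{u}^{k,c},\gamma_{v}^{h,d}\in\Gamma$. Since mutual normalization is a symmetric relation (the two congruences of \thref{thm:neo3} form a symmetric system in $\gamma$ and $\delta$), I may assume without loss of generality that $u\le v$. Then the pair $(u,v)$ falls into exactly one of four mutually exclusive and collectively exhaustive regions: either $\ceil{\frac{n}{2}}\le u\le v\le n$, or $1\le u<\ceil{\frac{n}{2}}\le v\le n$, or $1\le u<v<\ceil{\frac{n}{2}}$, or $1\le u=v<\ceil{\frac{n}{2}}$.

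Each region is already handled by a preceding result. For the first (both indices at least $\ceil{\frac{n}{2}}$), \thref{thm:cpn_neo_c1} shows that all such gamma functions lie in $H$ and mutually normalize, so every such pair is recorded. The second and third regions are excluded by \thref{thm:cpn_neo_c2} and \thref{thm:cpn_neo_c3} respectively, contributing no further mutual normalizations. The fourth region (equal indices $u=v<\ceil{\frac{n}{2}}$) is precisely the setting of \thref{thm:cpn_neo_cu}, whose criterion $k\equiv h\pmod{p^{n-2u-1}}$ together with $c=d$ is equivalent to $\gamma_u^{k,c}$ and $\gamma_u^{h,d}$ belonging to a common family $A_u^{t,c}$ as introduced in \thref{thm:cpn_neo_cu1}.

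Collecting these four cases, every mutually normalizing pair in $\Gamma$ either lies jointly within $H$ or within some common $A_u^{t,c}$, and all such pairs have already been enumerated in \thref{thm:cpn_neo_c1} and \thref{thm:cpn_neo_cu1}. Hence no further mutual normalizations exist. The only potential obstacle is bookkeeping, namely verifying that the quadripartition of index pairs is truly exhaustive and that the reduction to $u\le v$ is legitimate; both are immediate from the preceding definitions, so no new computation is needed.
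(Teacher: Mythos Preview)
Your proposal is correct and follows the paper's own approach: the paper offers no formal proof beyond the sentence ``Since we have taken into account all the possibilities'', and your explicit quadripartition of the index pairs $(u,v)$ is exactly the exhaustive bookkeeping that sentence alludes to. The only cosmetic difference is that the paper's \thref{thm:cpn_neo_c2} and \thref{thm:cpn_neo_c3} are stated with the ordering $v<u$ rather than your $u<v$, but your appeal to the symmetry of mutual normalization handles this, so nothing further is needed.
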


\begin{theorem}[Local normalizing graph of $\C_{p^n}$] \thlabel{thm:cpngraph}
    In the notation of the previous propositions, the local normalizing graph of $\C_{p^n}$ contains the following connected components.
    \begin{enumerate}
        \item The clique $H$, composed by $p^{n-\ceil{\frac{n}{2}}}$ regular subgroups.
        \item $\frac{1}{p+1}\left(p^{n-1}-p^{n-2\ceil{\frac{n}{2}}+1}\right)$ connected components $A_u^{t,c}$, each of which is a clique composed by $p^u$ regular subgroups, for $1\le u < \ceil{\frac{n}{2}}$, $0\le t<p^{n-2u-1}$, and $1\le c<p$.
    \end{enumerate}
\end{theorem}

\begin{remark}
    The form of the local normalizing graph of $\C_{p^n}$ strongly depends on the choice of the prime $p$. Indeed, it is composed only by disjoint cliques, each of which has $p^u$ vertices, for some $u$, as stated above. Therefore, since it is easier to understand if compared with the case p=2, we decided not to display it, in order not to lack of generality. 
\end{remark}

\section{Conclusion}

In this work, we have presented an application of the theory of gamma function in order to classify the mutually normalizing regular subgroups of the holomorph of a cyclic group of prime power order, and we have discovered the algebraic conditions of the structure of such groups that constrain the local normalizing graphs in their highly symmetrical shape. Since cyclic groups are the elementary building blocks with which we can construct every finite abelian group: it is ambitious, albeit natural, to wonder for a solution to the \emph{mutual normalization problem} for all the abelian groups. Despite, heuristically, it seems that a general pattern does not exist, we conclude this paper formulating an open problem which would extend out construction.

\begin{problem}
    Describe and classify the local normalizing graph for all finite abelian groups.
\end{problem}

\begin{acknowledgements}
This paper is based on the research conducted while I was working on the Master's Thesis at the University of Trento (Italy). I would like to express my gratitude to Professor Andrea Caranti, my thesis advisor, for his patient guidance and enthusiastic encouragement. My grateful thanks are also extended to Professor David Stanovský, my actual Ph.D. supervisor, for the review and the useful critiques of this research work.
\end{acknowledgements}

\printbibliography

\end{document}